\newtheorem{theorem}{Theorem}[section]
\newtheorem{lemma}[theorem]{Lemma}
\newtheorem{proposition}[theorem]{Proposition}
\newtheorem{corollary}[theorem]{Corollary}
\theoremstyle{definition}
\newtheorem{definition}[theorem]{Definition}
\newtheorem{definitions}[theorem]{Definitions}
\newtheorem{example}[theorem]{Example}
\newtheorem{definitions and remarks}[theorem]{Definitions and Remarks}
\newtheorem{conjecture}[theorem]{Conjecture}
\theoremstyle{remark}
\newtheorem{remark}[theorem]{Remark}
\newtheorem{remarks}[theorem]{Remarks}
\numberwithin{equation}{section}
\newcommand{\exc}{\mathrm{exc}}
\newcommand{\rk}{\mathrm{rk}\,}
\newcommand{\Sing}{\mathrm{Sing}\,}
\newcommand{\supp}{\mathrm{supp}\,}
\newcommand{\cosupp}{\mathrm{cosupp}\,}
\newcommand{\Jac}{\mathrm{Jac}\,}
\newcommand{\ord}{\mathrm{ord}}
\newcommand{\al}{{\alpha}}
\newcommand{\be}{{\beta}}
\newcommand{\de}{{\delta}}
\newcommand{\ep}{{\epsilon}}
\newcommand{\g}{{\gamma}}
\newcommand{\Ga}{{\Gamma}}
\newcommand{\la}{{\lambda}}
\newcommand{\La}{{\Lambda}}
\newcommand{\Om}{{\Omega}}
\newcommand{\p}{{\partial}}
\newcommand{\s}{{\sigma}}
\newcommand{\Sig}{{\Sigma}}
\newcommand{\io}{{\iota}}
\newcommand{\om}{{\omega}}
\newcommand{\IN}{{\mathbb N}}
\newcommand{\IQ}{{\mathbb Q}}
\newcommand{\IC}{{\mathbb C}}
\newcommand{\cF}{{\mathcal F}}
\newcommand{\cG}{{\mathcal G}}
\newcommand{\cH}{{\mathcal H}}
\newcommand{\cI}{{\mathcal I}}
\newcommand{\cJ}{{\mathcal J}}
\newcommand{\cK}{{\mathcal K}}
\newcommand{\cM}{{\mathcal M}}
\newcommand{\cO}{{\mathcal O}}
\newcommand{\cR}{{\mathcal R}}
\newcommand{\um}{\underline{m}}
\newcommand{\fm}{{\mathfrak m}}
\newcommand{\br}{\pmb{r}}
\newcommand{\bx}{\pmb{x}}
\newcommand{\by}{\pmb{y}}
\newcommand{\bu}{\pmb{u}}
\newcommand{\obu}{\overline{\pmb{u}}}
\newcommand{\tbu}{\tilde{\pmb{u}}}
\newcommand{\bv}{\pmb{v}}
\newcommand{\bz}{\pmb{z}}
\newcommand{\obv}{\overline{\pmb{v}}}
\newcommand{\bw}{\pmb{w}}
\newcommand{\bal}{\pmb{\al}}
\newcommand{\bbe}{\pmb{\be}}
\newcommand{\bla}{\pmb{\la}}
\newcommand{\bde}{\pmb{\de}}
\newcommand{\bep}{\pmb{\ep}}
\newcommand{\bg}{\pmb{\g}}
\newcommand{\bxi}{\pmb{\xi}}
\newcommand{\bta}{\pmb{\eta}}
\newcommand{\bzero}{\pmb{0}}
\newcommand{\ou}{\overline{u}}
\newcommand{\ov}{\overline{v}}
\newcommand{\oz}{\overline{z}}
\newcommand{\ta}{{\tilde a}}
\newcommand{\tb}{{\tilde b}}
\newcommand{\tg}{{\tilde g}}
\newcommand{\tu}{{\tilde u}}
\newcommand{\tv}{{\tilde v}}
\newcommand{\tw}{{\tilde w}}
\newcommand{\tX}{{\widetilde X}}
\newcommand{\tD}{{\widetilde D}}
\newcommand{\tE}{{\widetilde E}}
\newcommand{\tQ}{{\widetilde Q}}
\newcommand{\tR}{{\widetilde R}}
\newcommand{\tT}{{\widetilde T}}
\newcommand{\tU}{{\widetilde U}}
\newcommand{\tal}{{\tilde \al}}
\newcommand{\tbal}{{\tilde \bal}}
\newcommand{\tbde}{{\tilde \bde}}
\newcommand{\tga}{{\tilde \g}}
\newcommand{\tde}{{\tilde \de}}
\newcommand{\ttau}{{\tilde \tau}}
\begin{document}
\title[Resolution of singularities of the cotangent sheaf]
{Resolution of singularities of the cotangent sheaf of a singular variety}

\author[A.~Belotto]{Andr\'e Belotto da Silva}
\author[E.~Bierstone]{Edward Bierstone}
\author[V.~Grandjean]{Vincent Grandjean}
\author[P.D.~Milman]{Pierre D. Milman}
\address{AB,\,EB,\,PM: University of Toronto, Department of Mathematics, 40 St. George Street,
Toronto, ON, Canada M5S 2E4}
\email[A.~Belotto]{andrebelotto@gmail.com}
\email[E.~Bierstone]{bierston@math.toronto.edu}
\email[P.~Milman]{milman@math.toronto.edu}
\address{VG: Universidade Federal do Cear\'a, Departamento de Matem\'atica, Campus do Pici,
Bloco 914, Cep.\,60455-760 Fortaleza-Ce, Brasil}
\email[V.~Grandjean]{vgrandjean@math.ufc.br}
\thanks{Research supported in part by NSERC grants OGP0009070 and OGP0008949}

\subjclass{Primary 14E05, 14E15, 32S45; Secondary 14C30, 14F43, 32C15, 32S20, 32S35}

\keywords{Resolution of singularities, monomialization, logarithmic differential forms, Fitting ideals, Hsiang-Pati coordinates}

\begin{abstract}
The main problem studied here is resolution of singularities of the cotangent sheaf of a complex- or 
real-analytic variety $X_0$ (or of an algebraic variety $X_0$ over a field of characteristic zero). Given
$X_0$, we ask whether there is a global resolution of
singularities $\s: X \to X_0$ such that the pulled-back cotangent sheaf of $X_0$
is generated by differential monomials in suitable coordinates at every point of $X$ (``Hsiang-Pati
coordinates''). Desingularization of the cotangent sheaf is equivalent to monomialization of 
Fitting ideals generated by minors of a given order of the logarithmic Jacobian matrix of $\s$.
We prove resolution of singularities of the cotangent sheaf in dimension up to three. It was previously known
for surfaces with isolated singularities (Hsiang-Pati 1985, Pardon-Stern 2001). Consequences
include monomialization of the induced Fubini-Study metric on the smooth part of a complex
projective variety $X_0$; there have been important applications of the latter to $L_2$-cohomology.
\end{abstract}

\maketitle
\setcounter{tocdepth}{1}
\tableofcontents

\section{Introduction}\label{sec:intro}
The subject of this article is resolution of singularities or monomialization of differential forms 
on an algebraic or analytic variety.
Let $X_0$ denote either an algebraic variety over a field of characteristic zero, or a complex- or real-analytic
variety. We assume that $X_0$ is reduced; i.e., that its structure sheaf has no nilpotents. Let $\Sing X_0$
denote the singular subset of $X_0$. Our main goal is to prove the following conjecture.

\begin{conjecture}[\emph{Resolution of singularities of the cotangent sheaf}]\label{conj:main}
There is a \emph{resolution of singularities} of $X_0$ (i.e., a proper birational or bimeromorphic morphism 
$\s: X \to X_0$ such that $X$ is smooth, $\s$ is an isomorphism over $X_0\backslash \Sing X_0$, and $\s^{-1}(\Sing X_0)$ is 
the support of a simple normal crossings divisor $E$ on $X$), such that the pulled-back cotangent sheaf of $X_0$
is locally generated by \emph{differential monomials}
\begin{equation}\label{eq:main}
d(\bu^{\bal_i}), \, i = 1,\ldots,s, \,\, \text{and } \, d(\bu^{\bbe_j}v_j), \, j=1,\ldots,n-s,
\end{equation}
where $n=\dim X_0$, $(\bu,\bv) = (u_1,\ldots,u_s,v_1,\ldots,v_{n-s})$ are local (analytic or \'etale)
coordinates on $X$, and
\begin{enumerate}
\item $\supp E = (u_1\cdots u_s = 0)$,
\item the multiindices $\bal_1,\ldots,\bal_s \in \IN^s$ are linearly independent over $\IQ$,
\item $\{\bal_i,\bbe_j\}$ is totally ordered (with respect to the componentwise partial ordering of $\IN^s$).
\end{enumerate}
\end{conjecture}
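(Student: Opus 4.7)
The plan has two stages. Stage one is to apply Hironaka's resolution of singularities to $X_0$, producing $\s_1: X_1 \to X_0$ with $X_1$ smooth and $\s_1^{-1}(\Sing X_0) = \supp E_1$ for an SNC divisor $E_1$. This provides the basic coordinate framework: locally one has $(u_1,\ldots,u_s,v_1,\ldots,v_{n-s})$ with $E_1 = (u_1 \cdots u_s = 0)$, and the pullback $\s_1^*\Omega^1_{X_0}$ is generated by finitely many 1-forms $\s_1^* dx_k$, where the $x_k$ are local generators of $\cO_{X_0}$ near the image point. Stage two is to perform a further sequence of blowings-up along smooth centers contained in $E_1$, so that after pullback by the composite $\s : X \to X_0$ the sheaf $\s^*\Omega^1_{X_0}$ is locally generated by monomial forms of the type (\ref{eq:main}) with conditions (1)--(3).

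As noted in the excerpt, stage two is equivalent to monomializing the Fitting ideals $\cF_r$ generated by the $r \times r$ minors of the logarithmic Jacobian matrix of $\s$, whose columns are $(u_1\p x_k/\p u_1,\ldots, u_s\p x_k/\p u_s,\p x_k/\p v_1,\ldots, \p x_k/\p v_{n-s})$. I would proceed by induction on $r$. Suppose $\cF_1,\ldots,\cF_{r-1}$ are already principal monomial in the exceptional coordinates $(u_1,\ldots,u_s)$, and that $r-1$ monomial generators $d(\bu^{\bal_1}),\ldots,d(\bu^{\bal_{r-1}})$ of $\s^*\Omega^1_{X_0}$ have been extracted, with $\bal_1,\ldots,\bal_{r-1}$ $\IQ$-linearly independent. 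Apply Hironaka's principalization to $\cF_r$, using only smooth centers contained in $E_1$ so as to preserve the SNC structure and the monomial form of the lower Fitting ideals; a combinatorial selection on the Newton data of $\cF_r$ should then produce either the next purely exceptional exponent $\bal_r$, or, in a transverse direction, a pair $(\bbe_{j},v_{j})$ giving a generator of the form $d(\bu^{\bbe_j}v_j)$.

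The main obstacle --- and the reason the conjecture is open beyond dimension three --- is condition (3): the multiindices $\{\bal_i,\bbe_j\}$ must be totally ordered under the componentwise partial order on $\IN^s$, whereas ordinary principalization of ideals produces monomiality but not comparability. What is needed is an additional geometric invariant attached to the configuration of monomial exponents --- a ``complexity'' of $\{\bal_i,\bbe_j\}$ relative to the coordinate order on $\IN^s$ --- which strictly decreases under each admissible blowing-up with smooth center inside $E_1$, providing a termination argument. In dimension $\le 3$ one has $s\le 3$, and the combinatorics of $\IN^s$ with $s\le 3$ is amenable to explicit case analysis (this is presumably where the surface and threefold arguments of the paper are carried out, building on Hsiang--Pati and Pardon--Stern for isolated surface singularities). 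The hard part I do not see how to resolve in general is the construction of such a well-founded invariant, compatible with the restriction to centers inside $E_1$, in arbitrary dimension.
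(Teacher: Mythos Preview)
The statement is a \emph{conjecture}; the paper does not prove it in general, only in dimension $\leq 3$ (Theorem~\ref{thm:dim3}). You correctly recognize that you cannot complete the argument and that the problem is open beyond dimension three. However, your diagnosis of \emph{where} the difficulty lies is off, and this is worth correcting because it affects how one would actually attack the problem.

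You locate the obstruction in condition~(3), the total ordering of the exponents $\{\bal_i,\bbe_j\}$, and suggest that ordinary principalization gives monomiality but not comparability. This is not the issue. Theorem~\ref{thm:fitHP} (proved in the paper as Lemma~\ref{lem:fitHP}) shows that once \emph{all} the logarithmic Fitting ideals $\cF_0(\s),\ldots,\cF_{n-1}(\s)$ are principal monomial, the Hsiang--Pati coordinates---including the total ordering in~(3)---follow automatically by a direct linear-algebra construction. So the problem is purely to make every $\cF_k(\s)$ principal monomial.

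The actual obstruction is in your inductive step: you write ``Apply Hironaka's principalization to $\cF_r$, using only smooth centers contained in $E_1$ so as to preserve \ldots\ the monomial form of the lower Fitting ideals.'' This fails. The paper states explicitly (beginning of \S\ref{subsec:fitHP}) that $\cF_k(\s)$ for $k>0$ does \emph{not} commute with pullback, even up to a principal monomial factor; an admissible blowing-up with center inside $E$ can destroy the principality of $\cF_k$ already achieved. The example given there is $\s(u,v,w)=(u^2,u^3v,u^4w)$ blown up along $(u=v+w^2=0)$. Only $\cF_0$ is stable under arbitrary admissible blowings-up (Lemma~\ref{lem:fit}(1)), and only combinatorial blowings-up preserve all $\cF_k$ (Lemma~\ref{lem:fit}(2)). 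The three-dimensional proof in the paper works not by sequentially principalizing the $\cF_k$, but by introducing the invariant $\rho$ of \S\ref{sec:inv} on the single remaining ideal $\cF_1$ and decreasing it via a delicate sequence of blowings-up (prepared normal forms, \S\S\ref{sec:prepnorm}--\ref{sec:decrho}) chosen so as not to disturb what has already been achieved. Finding an analogue of this mechanism in higher dimension is the open problem.
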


An important consequence, for example in the case that $X_0$ is a complex projective variety, is that
the pull-back to $X$ of 
the induced \emph{Fubini-Study metric} on $X_0\backslash \Sing X_0$ is locally quasi-isometric to
$$
\sum_{i=1}^s d(\bu^{\bal_i}) \otimes \overline{d(\bu^{\bal_i})}\, +\, 
\sum_{j=1}^{n-s} d(\bu^{\bbe_j}v_j) \otimes \overline{d(\bu^{\bbe_j}v_j)}.
$$

We will show that the problem of desingularization of the cotangent sheaf
can be reformulated
in terms of principalization of logarithmic Fitting ideal sheaves (an approach suggested already by
 \cite{PS}); see Section \ref{sec:fitHP} below. Given a resolution of singularities $\s: X \to X_0$, the \emph{logarithmic Fitting ideal} $\cF_k(\s)$ denotes
the sheaf of ideals of $\cO_X$ generated locally by the minors of order $n-k$ of the Jacobian matrix of $\s$
with respect to a logarithmic basis of 1-forms on $X$; see \S\ref{subsec:logdiff}.
\begin{theorem}\label{thm:fitHP}
Let $\s: (X,E) \to (X_0, \Sing X_0)$ be a resolution of singularities of $X_0$. Then the following
conditions are equivalent.
\begin{enumerate}
\item The logarithmic Fitting ideals $\cF_k(\s)$, $k=0,\ldots,n-1$, are all principal monomial ideals
(generated locally by monomials in components of the exceptional divisor).
\item The morphism $\s$ is a resolution of singularities of the cotangent sheaf of $X_0$, as in
Conjecture \ref{conj:main}.
\end{enumerate}
\end{theorem}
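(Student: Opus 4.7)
The proof analyzes the logarithmic Jacobian matrix $M$ obtained by expanding a set of local generators of $\sigma^{*}\Omega^{1}_{X_0}$ in the basis $\tfrac{du_1}{u_1},\ldots,\tfrac{du_s}{u_s},dv_1,\ldots,dv_{n-s}$ of $\Omega^{1}_{X}(\log E)$.

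Direction (2) $\Rightarrow$ (1) is a direct computation. Using
\[
d(\bu^{\bal_i}) = \bu^{\bal_i}\sum_{k=1}^{s}(\bal_i)_{k}\,\tfrac{du_k}{u_k}, \qquad
d(\bu^{\bbe_j}v_j) = \bu^{\bbe_j}v_j\sum_{k=1}^{s}(\bbe_j)_{k}\,\tfrac{du_k}{u_k}+\bu^{\bbe_j}\,dv_j,
\]
$M$ becomes block lower-triangular with upper-left block $A=\bigl((\bal_i)_k\bu^{\bal_i}\bigr)_{i,k=1}^{s}$ and lower-right diagonal block $D=\mathrm{diag}(\bu^{\bbe_1},\ldots,\bu^{\bbe_{n-s}})$. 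One checks that every $(n-k)\times(n-k)$ minor factors as $c\cdot\bu^{\bg}$ for some polynomial $c$ in the $v_j$'s and an exponent $\bg$ equal to the sum of some subset of $\{\bal_i\}\cup\{\bbe_j\}$, where $c$ is a unit at the point whenever the minimum of the summand monomials is achieved. Condition (2) forces $\det M$ to be a unit times a pure monomial, so $\cF_0$ is principal monomial; condition (3) makes the exponents arising at each size $n-k$ totally ordered, so a single minimal monomial generates $\cF_k$.

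For direction (1) $\Rightarrow$ (2), fix $p\in X$, choose initial local coordinates $(\bu,\bv)$ with $E=(u_1\cdots u_s=0)$, and write generators $\omega_1,\ldots,\omega_N$ of $\sigma^{*}\Omega^{1}_{X_0}$ in the log basis to form an $N\times n$ matrix $M$. The plan is to normalize $M$ by $\cO_X$-linear row operations on the $\omega_i$ (which preserve the cotangent sheaf and its Fitting ideals) combined with changes of $\bv$-coordinates that fix $E$. Principality of $\cF_0$ yields a distinguished $n\times n$ submatrix whose determinant generates $\cF_0$, so after row operations we may reduce to exactly $n$ generators. Proceeding by downward induction on $k$, principality of $\cF_k$ identifies a dominant $(n-k)\times(n-k)$ minor from which, via suitable row reductions and unit adjustments of the $v_j$'s, one extracts a monomial exponent that becomes one of the $\bal_i$ or $\bbe_j$ and peels off the corresponding generator as $d(\bu^{\bal_i})$ or $d(\bu^{\bbe_j}v_j)$. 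Condition (2) is then forced by the requirement that $\det M$ be a unit times a pure monomial (i.e. $\IQ$-linear independence of the $\bal_i$), and condition (3) by the requirement that the dominant minor at each Fitting level divide every other minor of the same size.

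The principal obstacle lies in direction (1) $\Rightarrow$ (2): the local ring $\cO_{X,p}$ is not a PID and so no Smith-normal-form argument is directly available, and the admissible coordinate changes are restricted to those fixing $E$. Organizing the downward induction on $k$ so that each reduction is simultaneously compatible with the monomiality of $\cF_k$, with operations that extend across the exceptional divisor, and with rows already normalized at previous stages, is the delicate bookkeeping that drives the proof; in particular one must verify that the unit factors absorbed into the $v_j$'s do not disturb the monomial exponents extracted earlier, and that the resulting total order on $\{\bal_i,\bbe_j\}$ is precisely the one inherited from the chain of Fitting ideals.
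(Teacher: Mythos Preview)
Your direction (2)$\Rightarrow$(1) is correct and is essentially the paper's computation: the paper likewise observes that the row of $d(\bu^{\bal_i})$ in the log basis is $\bu^{\bal_i}(\bal_i,0)$ and that of $d(\bu^{\bbe_j}v_j)$ is $\bu^{\bbe_j}(v_j\bbe_j,(j))$, and concludes that $\cF_{n-m}$ is generated by $\bu^{\bg_m}$ with $\bg_m$ the sum of the first $m$ elements of the totally ordered set $\{\bal_i,\bbe_j\}$.

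For (1)$\Rightarrow$(2) your inductive scheme is the same as the paper's (the paper phrases it as a forward induction on the number $k$ of generators already normalized, using principality of $\cF_{n-1},\cF_{n-2},\ldots$ in turn), but there is a genuine gap in your sketch at the key step. When the dominant term of the next generator is of the form $S\cdot\bu^{\bal}$ with $S$ a unit and $\bal$ linearly independent of the previous $\bal_1,\ldots,\bal_{l_k}$, you cannot absorb $S$ by ``unit adjustments of the $v_j$'s'' alone: a pure $\bu$-monomial has no $v$-dependence, so no change of the $v_j$'s will convert $S\bu^{\bal}$ into $\obu^{\bal}$. What is required is a multiplicative change of the \emph{exceptional} coordinates, $\ou_h = S^{\ep_h}u_h$, together with compensating changes $\ov_j = S^{-\langle\bbe_j,\bep\rangle}v_j$. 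The vector $\bep\in\IQ^s$ must satisfy $\langle\bal_i,\bep\rangle=0$ for $i\le l_k$ and $\langle\bal,\bep\rangle=1$; this is exactly where the $\IQ$-linear independence of the $\bal_i$'s is used, and this choice is what guarantees $\obu^{\bal_i}=\bu^{\bal_i}$, $\obu^{\bbe_j}\ov_j=\bu^{\bbe_j}v_j$ (so earlier generators are untouched) while $\obu^{\bal}=S\bu^{\bal}$. Your last paragraph flags the compatibility issue but does not supply this mechanism, and without it the induction does not go through.

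A second, smaller point: reducing to exactly $n$ generators via the principal $n\times n$ minor is not how the paper proceeds, and is not obviously harmless. The paper keeps all $N$ components $\s_1,\ldots,\s_N$ and carries along, at each stage $k$, a structured decomposition $\s_m = g_{mk}+\bu^{\bde_{mk}}S_{mk}$ of the remaining components (with $dg_{mk}$ already in the span of the normalized forms); identifying the $(k{+}1)$st exponent requires looking at \emph{all} $m>k$, since the dominant minor at level $n-(k{+}1)$ may come from $d\s_1\wedge\cdots\wedge d\s_k\wedge d\s_{m_0}$ for any $m_0>k$.
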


Conjecture \ref{conj:main} can be strengthened by asking that $\s$ be a composite of 
blowings-up with smooth admissible centres (\emph{admissible} means that each centre of
blowing-up has only normal crossings with the exceptional divisor; also see \S\ref{subsec:res}).
The following is our main result.

\begin{theorem}\label{thm:dim3}
Conjecture \ref{conj:main} (in the preceding stronger form) holds for varieties of dimension $\leq 3$.
\end{theorem}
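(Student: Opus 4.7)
By Theorem \ref{thm:fitHP}, it suffices to produce a composite $\s : X \to X_0$ of admissible blowings-up, with simple normal crossings exceptional divisor $E$, such that (a) each logarithmic Fitting ideal $\cF_k(\s)$, $k=0,\ldots,n-1$, is principal monomial in the components of $E$, and (b) at every point of $X$ the local generators of $\s^*\Om_{X_0}$ can be arranged in the form \eqref{eq:main} satisfying (1)--(3). My plan is to proceed in three stages: an initial resolution, a simultaneous principalization of all Fitting ideals, and a combinatorial normal-form step that enforces the linear-independence and total-ordering conditions.

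For the first stage, apply a Hironaka-type resolution of singularities to obtain $\s_0 : X_1 \to X_0$ by admissible blowings-up with smooth centres, producing an snc exceptional divisor on $X_1$. On $X_1$, consider the logarithmic Fitting ideals $\cF_k(\s_0)$ and apply principalization compatibly with the exceptional divisor (as in Hironaka or W\l odarczyk) to produce $\pi : X_2 \to X_1$, a further composite of admissible blowings-up, such that on $X_2$ every $\cF_k(\s_0\circ\pi)$ is a monomial ideal supported on components of the exceptional divisor. No restriction on $\dim X_0$ enters in this stage; the output already achieves condition (a).

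Next, working locally at a point $p \in X_2$ lying on $s$ components of the exceptional divisor, choose local coordinates $(\bu,\bv) = (u_1,\ldots,u_s,v_1,\ldots,v_{n-s})$ adapted to $E$. The pulled-back cotangent sheaf then has a local generating matrix relative to the logarithmic basis of $1$-forms, whose minors of each order generate the monomial Fitting ideals produced above. By elementary column and row operations over $\cO_{X_2,p}$, together with Weierstrass-type preparation in the $\bv$ variables, one aims to bring the generators into the tentative shape \eqref{eq:main}. The rank of the logarithmic part of $\s^*\Om_{X_0}$ at $p$ forces exactly $s$ generators of the form $d(\bu^{\bal_i})$, and the linear independence over $\IQ$ in condition (2) is imposed by requiring the top logarithmic Fitting ideal to have the correct monomial generator.

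The main obstacle is the final combinatorial adjustment: after the normal-form step the multiindices $\{\bal_i,\bbe_j\}$ need not be componentwise comparable, so condition (3) may fail. To enforce it one performs additional toric blowings-up along strata of intersections of exceptional components; each such blowing-up refines the fan of exponents and, in the relevant charts, replaces pairs of incomparable multiindices by comparable ones. The case $n=1$ is trivial, while $n=2$ reduces to an extension of the classical Hsiang--Pati argument past isolated singularities; the new content is $n=3$, where multiindices live in $\IN^s$ with $s\leq 3$ and the combinatorics is substantially richer. I would organise this step as a descending induction on a discrete invariant measuring the failure of the chain condition, for instance lexicographically on the number of incomparable pairs together with appropriate weights. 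The technical heart of the proof --- and where the restriction to $n\leq 3$ ultimately manifests --- is to show that each toric blowing-up is admissible, strictly decreases this invariant, and preserves both the principal-monomial form of the Fitting ideals obtained in the first stage and the local shape achieved in the second; sufficient canonicity of the chosen centres would then allow the local adjustments to glue to a globally admissible sequence of blowings-up.
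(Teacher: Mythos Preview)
Your first stage contains a fundamental gap: you assert that after standard principalization $\pi$ one obtains that every $\cF_k(\s_0\circ\pi)$ is principal monomial, claiming ``no restriction on $\dim X_0$ enters in this stage.'' But the logarithmic Fitting ideals $\cF_k$ for $k>0$ do \emph{not} commute with pullback, even up to a monomial factor. Principalizing $\cF_k(\s_0)$ by blowings-up $\pi$ makes $\pi^*\cF_k(\s_0)$ principal, but $\cF_k(\s_0\circ\pi)$ is in general a different ideal. The paper makes this point explicitly (see \S\ref{subsec:fitHP} and Lemma~\ref{lem:fit}): only $\cF_0$ transforms by a monomial factor under an arbitrary admissible blowing-up, and for $k>0$ one needs the blowing-up to be combinatorial. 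A concrete counterexample is given: take $\s(u,v,w)=(u^2,u^3v,u^4w)$ and blow up the admissible centre $(u=v+w^2=0)$. So your claim that condition~(a) is achieved generally in stage one is false, and this is precisely where the dimension restriction enters.

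You have also located the difficulty in the wrong place. By Lemma~\ref{lem:fitHP} (which underlies Theorem~\ref{thm:fitHP}), once \emph{all} Fitting ideals are principal monomial, the Hsiang--Pati monomial form \eqref{eq:main}---including the $\IQ$-linear independence of the $\bal_i$ and the total ordering of $\{\bal_i,\bbe_j\}$---follows automatically from a direct inductive construction of coordinates; no further toric blowings-up are needed, and your entire third stage is superfluous. The genuine obstacle is the principalization of the intermediate Fitting ideal $\cF_1(\s)$ at points of log rank~$0$. The paper handles this (Sections~\ref{sec:inv}--\ref{sec:decrho}) by introducing an upper-semicontinuous invariant $\rho$ of $\cF_1(\s)$, reducing to $\rho<\infty$ by combinatorial blowings-up, bringing $\s$ to a Weierstrass-type prepared normal form, and then decreasing $\rho_{\max}$ by carefully chosen admissible blowings-up whose centres are globally defined. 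This last step is a delicate case analysis specific to dimension three and is where the restriction $n\le 3$ is actually used.
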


The result was previously proved (at least locally) in the case of surfaces (2-dimensional varieties) with
isolated singularities by W.-C.~Hsiang and V.~Pati \cite[1985]{HP}, and a more conceptual proof
in this case  was given by W.~Pardon and M.~Stern \cite[2001]{PS}.
Our formulation of Conjecture \ref{conj:main} is due to B.~Youssin \cite[1998]{Y}. A system of coordinates
as in Conjecture \ref{conj:main} will be called \emph{Hsiang-Pati coordinates}.

One of the main interests of the Hsiang-Pati problem has been for applications to the $L_2$-cohomology
 of the smooth part of a singular variety, going back to the original ideas of Cheeger \cite{C1,C2}. Hsiang and
 Pati used their result to prove that the intersection cohomology (with the middle perversity) of a complex
 surface $X_0$ equals the $L_2$-cohomology of $X_0 \backslash \Sing X_0$ (Cheeger-Goresky-Macpherson
 conjecture \cite{CGM}). Several controversial articles on both the Hsiang-Pati problem and the 
 $L_2$-cohomology of singular varieties have perhaps discouraged 
 work on these questions; we hope that our results will lead to a renewal of interest.
 
 Our main conjecture \ref{conj:main} is closely related to the problem
 of \emph{monomialization} or \emph{toroidalization} of a morphism (see \S\ref{subsec:mon}),
 and our proof
 of Theorem \ref{thm:dim3} is strongly influenced by Cutkosky \cite{Cut2}; in particular, the invariant $\rho$ of
 Section \ref{sec:inv} below is introduced in the latter (but our article does not depend on \cite{Cut2}). 
 Problems involving techniques similar to those developed
 in this article are treated in \cite{Be1,Be2}.
 
 We are grateful to Franklin Vera Pacheco for several very helpful comments.
 
\subsection{Outline of the paper} 
The logarithmic Fitting ideals $\cF_k(\s)$
cannot be principalized by a simple application of resolution of singularities
because $\cF_k(\s)$ does not, in general, commute with pull-back (even up to multiplication by
a principal monomial ideal). We show, nevertheless, that standard desingularization techniques can be
used to principalize the Fitting ideal $\cF_0(\s)$ of highest order minors, as well as the Fitting ideals
of lower order minors. More precisely, we can reduce to the case that, if the logarithmic Jacobian matrix
has rank $r$ at $a \in X$, then $\cF_0(\s)$ as well as $\cF_{n-1}(\s)_a,\ldots, \cF_{n-r-1}(\s)_a$ are principal,
and the first $r+1$ components of $\s$ at $a$ (with respect to suitable local coordinates) are given by 
Hsiang-Pati monomials
$$
\s_1 = v_1,\quad \ldots,\quad \s_r = v_r,\quad \s_{r+1} = \bu^{\bal_1},
$$
(where $(\bu,\bv)$ are coordinates at $a$ in which $\supp E = (u_1\cdots u_s = 0)$; see \S\ref{sec:start}).

An immediate consequence is that Conjecture \ref{conj:main} holds in the case $\dim X_0 \leq 2$.
Moreover, to prove Theorem \ref{thm:dim3} (when $\dim X_0 = 3$),
it remains only to principalize $\cF_1(\s)$ at points of log rank $0$;
the image of such points in $X_0$ forms a discrete subset.

Principalization of $\cF_1(\s)$ is technically the most difficult part of the paper. We argue by induction on 
the maximal value of an
upper-semicontinuous local invariant $\rho$ of $\cF_1(\s)$. The invariant $\rho$ has possible values $0,1,\ldots,\infty$,
and $\rho(a) = 0$ if and only if $\cF_1(\s)_a$ is a principal monomial ideal (Section \ref{sec:inv}).
Our proof of principalization of $\cF_1(\s)$ has three main steps (cf. Section \ref{sec:outline}):

\smallskip\noindent
Step 1. Reduction to the case that $\rho(a) < \infty$, for all $a$. In this case, at a point $a$ of log rank $0$,
we can write the components of 
$\s$ (with respect to suitable local coordinates as in Conjecture \ref{conj:main}) as $\s_1 = \bu^{\bal}$ and
$\s_i = g_i(\bu) + \bu^{\bde}T_i$, $i > 1$, where $\bu^{\bal}$ divides each $\s_i$, every
$dg_i$ is in the submodule generated by $d(\bu^{\bal})$, and (in the case that $a$ is a 1- or 2-point) 
$T_2$ can be written essentially
in Weierstrass polynomial form with respect to a distinguished variable $v$ (Lemma \ref{lem:rho}. We say
that $a$ is an $s$-\emph{point} when $\supp E = (u_1\cdots u_s = 0)$ at $a$.)

\smallskip\noindent
Step 2. Reduction to prepared normal form (Lemma \ref{lem:prepnormal} and Section \ref{sec:prepnorm}). 
By further blowings-up, we reduce to the case that the coefficients
of the $T_i$ as expansions in $v$ are monomials (times units) in local coordinates as above,
and the zeroth coefficients (i.e., the coefficients
of $v^0$) are essentially in Hsiang-Pati form (as components of a morphism in dimension two).

\smallskip\noindent
Step 3. Decrease of $\rho$, by further blowings-up (Lemma \ref{lem:decrho} and Section \ref{sec:decrho}).

\subsection{Examples}\label{subsec:ex}
The following examples illustrate some of the challenges in the Hsiang-Pati problem.

\begin{example}\label{ex:1}
Let $\s = (\s_1,\s_2,\s_3)$ be the morphism given by
\begin{equation*}
\s_1 = u^2,\quad \s_2 = u^3(v^2 + uw),\quad \s_3 = u^4v,
\end{equation*}
where $E = (u=0)$. Then the log Fitting ideals $\cF_0(\s),\, \cF_2(\s)$ are principal, and
$\cF_1(\s) = u^5\cdot(u,v)$ at the 1-point $a=0$. This is essentially the simplest example of a morphism
written in prepared normal form (Lemma \ref{lem:prepnormal}), that does not yet satisfy the conditions of
Conjecture \ref{conj:main}. In this example, $\s$ can be reduced to Hsiang-Pati form (in fact, $\s$
can be monomialized; cf. Section \ref{subsec:mon} below) by two blowings-up.

Let $\tau$ denote the blowing-up with centre $C = (u=v=0)$ ($C$ is the locus of points
where the invariant $\rho =1$). Then $\tau$ can be covered by
two coordinate charts. Let $b \in \tau^{-1}(a)$. There are two possibilities:
\begin{enumerate}
\item $b$ belongs to the ``$u$-chart'', with coordinates $(x,\tv,\tw)$ in which $\tau$ is given by
$(u,v,w) = (x,x\tv,\tw)$. Then
$$
\s_1\circ\tau = x^2,\quad \s_2\circ\tau = x^4(x\tv + \tw),\quad \s_3\circ\tau = x^5\tv,
$$
and $\s\circ\tau$ is in Hsiang-Pati form in this chart; in fact, $\s_2\circ\tau = x^4w'$ after a 
coordinate change $w' = \tw + x\tv$, at any $b\in \tau^{-1}(a)$ in the chart.
\smallskip
\item $b$ is the origin of the $v$-chart, with coordinates $(x,y,\tw)$ in which $\tau$ is given by
$(u,v,w) = (xy,y,\tw)$. In this chart, $\s_3\circ\tau = y(y+x\tw)$; $b$ is a 2-point (with $\rho(b) = \infty$)
and one more blowing-up, with centre the 2-curve $(x=y=0)$, is needed to reduce to Hsiang-Pati form.
Note that this centre is globally defined in the source of $\tau$.
\end{enumerate}

We remark that Pati \cite{Pati} and Taalman \cite{Laura} claim that, in dimension three, $\cF_1(\s)$ is necessarily
principal at a 1-point, because ``since $u$ does not divide $R$ [where 
$R = T_2 = v^2 + uw$ in the example above], the 2-form $du \wedge dR$ is nowhere-vanishing [on $(u=0)$],
and thus $R$ is a coordinate independent of $u$'' \cite[p.\,258]{Laura} (cf. \cite[p.\,443]{Pati}).
The example above shows this is not true.
\end{example}

\begin{example}\label{ex:2}
Let $\s = (\s_1,\s_2,\s_3,\s_4,\s_5)$ be the morphism given by
\begin{equation*}
\s_1 = u^2,\quad \s_2 = u^3(v^3 + (y^2 + ux^2)uv + u^3w),\quad \s_3 = u^4v,\quad \s_4 = u^4y,\quad \s_5 = u^4x,
\end{equation*}where $E = (u=0)$. Then $\cF_0(\s)$ and $\cF_4(\s)$ are principal at $0$. 
It might appear reasonable to blow up $(x=y=0)$ to principalize
$\cF_3(\s)$, but this centre is not in $\supp E$.
 \end{example}
 
 \subsection{Monomialization of a morphism}\label{subsec:mon}
 Given $X_0$, we can ask whether there
 exists a resolution of singularities as in Conjecture \ref{conj:main} such that the components of $\s$ (with 
 respect to suitable local coordinates of a smooth local embedding variety of $X_0$) are themselves
 monomials (rather than only their differentials). This is not true, in general, because it would imply that
 $X_0$ locally has a toric structure.
 
 It is reasonable to ask, on the other hand, whether a morphism $\s: X \to Y$ can be monomialized by 
 blowings-up in both the source
 \emph{and the target} --- this is the problem of monomialization. In its simplest formulation,
 we can ask whether, after suitable global blowings-up in the source and target, a proper birational or
 bimeromorphic morphism $\s$ can be transformed to a morphism that can be expressed locally as
 $$
 x_i = \bu^{\bal_i},\quad y_j = \bu^{\bbe_j}(c_j + v_j),\quad z_k = w_k,
 $$
 with respect to coordinates $(\bu,\bv,\bw) = (u_1,\ldots,u_p, v_1,\ldots,v_q, w_1,\ldots,w_r)$ and
 $(\bx,\by,\bz) \allowbreak = (x_1,\ldots,x_p, y_1,\ldots,y_q, z_1,\ldots,z_r)$ in the source and target (respectively),
 where $\bu$ and $(\bx,\by)$ represent the exceptional divisors, the exponents $\bal_i$ are
 $\IQ$-linearly independent, and the $c_j$ are nonzero constants.
 
 Our approach to Conjecture \ref{conj:main} or Theorem \ref{thm:dim3} is a step in this direction. By blowings-up
 in the source, we aim to express the successive components of $\s$ as the Hsiang-Pati monomials
 $(\bu^{\bal_i}, \bu^{\bbe_j}v_j)$, in the ordering of Conjecture \ref{conj:main}(3), 
 plus additional terms whose differentials are in the
 submodule generated by \eqref{eq:main}; see \S\ref{subsec:fitHP} and Lemma \ref{lem:rho}. The passage
 from such a statement to monomialization of $\s$ by blowings-up in the source and target is an interesting
 problem that we plan to treat in a future article (cf. Cutkosky \cite[Sections\,18,\,19]{Cut1} for morphisms from 
 dimension three to two).
 
 The problem of monomialization or toroidalization of morphisms has an extensive literature (see, for example,
\cite{AKMW,Cut3} and references therein), though the only general results either are of a local nature or
 involve generically finite rather than birational or bimeromorphic modifications. Cutkosky has proved
 global monomialization for algebraic morphisms in dimension up to three \cite{Cut3,Cut2}. 
 The normal forms of \cite[Section\,3]{Cut2} cannot
 be obtained, however, as claimed in the proof of \cite[Lemma\,3.6]{Cut2} (see \cite{Cut4});
 we use different normal forms in our Lemma \ref{lem:prepnormal}.
 
 Cutkosky's arguments do not extend to analytic morphisms in an evident way
 because they involve local blowings-up that are globalized by algebraic techniques
 (e.g., Zariski closure, Bertini's theorem) that are not available in an analytic category.
 One of the main differences of our approach from that of \cite{Cut3,Cut2} is that we chose as
 centres of blowing up only subspaces that \emph{a priori} have global meaning (see Section \ref{sec:decrho}
 below).
 
 \section{Logarithmic Fitting ideals}\label{sec:logfit}
 Let $X_0$ denote either an algebraic variety over a field of characteristic zero, or a complex- or real-analytic
variety. Assume that $X_0$ is reduced. 

\subsection{Blowing up and resolution of singularities}\label{subsec:res}
A \emph{resolution of singularities} of $X_0$ is a proper birational or bimeromorphic morphism $\s: X \to X_0$ 
such that $X$ is smooth, $\s$ is an isomorphism over $X_0\backslash \Sing X_0$, and $\s^{-1}(\Sing X_0)$ is
the support of a simple normal crossings divisor $E$ on $X$ (the \emph{exceptional divisor}). See
\cite{BMinv,BMfunct}. We write 
$\s: (X,E) \to (X_0, \Sing X_0)$.

Given a smooth variety $X$ with a simple normal crossings divisor $E$, a blowing-up $\s: X' \to X$ is
called \emph{admissible} (for $E$) if the centre of $\s$ is smooth and has only normal crossings
with $E$. An admissible blowing-up is called \emph{combinatorial} if its centre is an intersection of
components of $E$. 

A singular variety $X_0$ locally admits an embedding $X_0|_U \hookrightarrow M_0$ in a smooth
variety $M_0$ ($U$ denotes an open subset of $X_0$). A divisor $E_0$ on $X_0$ has only 
\emph{normal crossings} if $E$ is the restriction
of ambient normal crossings divisors, for a suitable covering of $X_0$ by embeddings in smooth 
varieties. The notions of simple normal crossings divisor, admissible and combinatorial blowing-up
all make sense in the same way. 

If $X_0$ is an algebraic or compact analytic variety (with a simple normal crossings divisor $E_0$,
perhaps empty), then a resolution of singularities $(X,E) \to
(X_0, \Sing X_0)$ can be obtained as a composite of finitely many smooth admissible blowings-up.
In the case of a general analytic variety $X_0$, resolution of singularities can be obtained by a morphism
which can be realized as a composite of finitely many smooth admissible blowings-up over any relatively
compact open subset of $X_0$ (we will still say, somewhat loosely, that $\s$ is a ``composite of blowings-up'').

\subsection{Logarithmic differential forms}\label{subsec:logdiff}
Let $X$ denote a smooth variety with simple normal crossings divisor $E$,
and let $\Om_X^1$ denote the $\cO_X$-module of differential $1$-forms on $X$;
$\Om_X^1$ is locally free of rank $n = \dim X$.

Let $\Om_X^1(\log E)$ denote the $\cO_X$-module of logarithmic 1-forms on $X$: If $U$ is an
\'etale or analytic local coordinate chart of $X$ at a point $a$, with coordinates $(\bu,\bv) = (u_1,\ldots,u_s,v_1,\ldots,v_{n-s})$ such that $a=0$ and $(u_i=0)$, $i=1,\ldots,s$, are the components of $E$ in
$U$ (we say the coordinates $(\bu,\bv)$ are \emph{adapted} to $E$), then $\Om_X^1(\log E)$
is locally free at $a$ with basis given by
\begin{equation}\label{eq:logbasis}
\frac{du_i}{u_i}, \, i=1,\ldots,s, \,\, \text{and } \, dv_j, \, j=1,\ldots,n-s.
\end{equation}
There is a natural inclusion $\Om_X^1 \hookrightarrow \Om_X^1(\log E)$ (given by writing any
1-form in terms of a ``logarithmic basis'' \eqref{eq:logbasis}).

Given a singular variety $X_0$, we also write $\Om_{X_0}^1$ for the \emph{cotangent sheaf} of $X_0$.
($\Om_{X_0}^1$ has stalk $\um_{X_0,a}/\um_{X_0,a}^2$ at $a$, where $\um_{X_0,a}$ denotes
the maximal ideal of $\cO_{X_0,a}$.)
Suppose that $\s: (X,E) \to (X_0, \Sing X_0)$ is a resolution of singularities (in particular, $\supp E = \s^{-1}(\Sing X_0)$). 
Let $\s^*\Om_{X_0}^1$ denote the submodule of $\Om_X^1$ generated by the pull-back of 
$\Om_{X_0}^1$, and consider the quotient $\cO_X$-module 
\begin{equation}\label{eq:phi}
\Phi := \Om_X^1(\log E)/\s^*\Om_{X_0}^1.
\end{equation}
(If $X_0 \hookrightarrow Z_0$, where $Z_0$ is smooth, then $\Om_{X_0}^1$ is induced by the restriction
to $X_0$ of $\Om_{Z_0}^1$, and $\s^*\Om_{X_0}^1 = \s^*\Om_{Z_0}^1$.)

\begin{remark}\label{rem:lindep}
Let $\bx := (x_1,\ldots,x_n)$ and let $\bbe_1,\ldots, \bbe_k \in \IN^n$. If $\bg$ is linearly dependent on
$\bbe_1,\ldots, \bbe_k$ over $\IQ$, say $\bg = \sum_{i=1}^k q_i \bbe_i$, where each $q_i \in \IQ$,
then $d \bx^{\bg} = \sum_i q_i x^{\bg-\bbe_i}d \bx^{\bbe_i}$. In particular, if $\bg \geq \bbe_i$, for all $i$,
then $d \bx^{\bg}$ is in the submodule generated by the $d \bx^{\bbe_i}$.
\end{remark}

If $X_0|_V \hookrightarrow M_0$ is a local embedding over a neighbourhood $V$ of $b = \s(a)$,
then $\s^*(\Om_{X_0}^1|_V) = \s^*\Om_{M_0}^1$. If $\dim M_0 = N$ and $\s$ is expressed in components
$\s = (\s_1,\ldots,\s_N)$ with respect to local coordinates of $M_0$ at $b$, then $\Phi$ has a
presentation at $a$ given by (the transpose of) the \emph{logarithmic Jacobian matrix} of $\s$,
\smallskip
\begin{equation}\label{eq:logjac}
\log \Jac \s = 
\begin{pmatrix}
\displaystyle{u_1\frac{\p \s_1}{\p u_1}} & \cdots & \displaystyle{u_s \frac{\p \s_1}{\p u_s}} & 
     \displaystyle{\frac{\p \s_1}{\p v_1}} & \cdots & \displaystyle{\frac{\p \s_1}{\p v_{n-s}}}\\
\vdots & \ddots & \vdots & \vdots & \ddots & \vdots\\
\displaystyle{u_1\frac{\p \s_N}{\p u_1}} & \cdots & \displaystyle{u_s \frac{\p \s_N}{\p u_s}} & 
\displaystyle{\frac{\p \s_N}{\p v_1}} & \cdots & \displaystyle{\frac{\p \s_N}{\p v_{n-s}}}
\end{pmatrix}.
\end{equation}

\smallskip
Note that every minor of order $n$ of $\log \Jac \s$ equals $u_1\cdots u_s$ times the corresponding
minor of order $n$ of the standard Jacobian matrix $\Jac \s$.

The rank at $a$ of $\log \Jac \s$ will be called the \emph{logarithmic rank} $\log \rk_a \s$ of $\s$ at $a$.
It is clear from \eqref{eq:logjac} that, if $a \in \supp E$, then $\log \rk_a \s = \rk_a (\s|_{E(a)})$,  where $E(a)$ 
denotes the intersection of the components of $E$ containing $a$ (We call $E(a)$ the stratum of $E$ at $a$. 
If $a \notin \supp E$, then $\log \rk_a \s := \rk_a \s$.)

\subsection{Logarithmic Fitting ideals}\label{subsec:fit}
For each $k=0,\ldots,n-1$, the \emph{logarithmic Fitting ideal}
$\cF_k = \cF_k(\s)$ denotes the ideal (i.e., sheaf of ideals) of $\cO_X$ generated by the minors of order
$n-k$ of $\log \Jac \s$. The Fitting ideals $\cF_k$ depend only on the quotient module $\Phi$ \eqref{eq:phi};
in particular, they are independent of the choices of adapted local coordinates of $(X,E)$, the local 
embedding $X_0|_V \hookrightarrow M_0$ and the local coordinates of $M_0$ (cf. \cite[\S20.2]{E}).

The relevance of logarithmic Fitting ideals to the main conjecture was recognized by Pardon and Stern \cite{PS},
and Theorem \ref{thm:fitHP} is suggested by their work.

\subsection{Transformation of logarithmic Fitting ideals by blowing up}\label{subset:blup}
Let $\be$ denote an admissible blowing-up with centre $C \subset \supp E$, and let $E'$ denote the 
\emph{transform} of $E$ by $\be$ (by definition, the components of $E'$ are the strict transforms by $\be$ of
the components of $E$, together with the exceptional divisor of $\be$). In this case, $\supp E' = \be^{-1}(\supp E)$;
we will write $\be: (X',E') \to (X,E)$. With the respect to adapted
local coordinates $(\bu,\bv)$ at a point $a \in C$, as above, we can assume that $C$ is given by
$$
u_1 = \cdots = u_k = 0, \ \text{for some} \ k \geq 1, \ \text{and}\ \, v_1 = \cdots = v_l = 0, \ \text{for some} \ l \geq 0.
$$
The blowing-up $\be$ is combinatorial (\S\ref{subsec:res}) precisely when $l=0$.

Over the $(\bu,\bv)$ coordinate chart, $X'$ can be covered by $k+l$ coordinate charts --- a ``$u_i$-chart'', for
each $i=1,\ldots,k$, and a ``$v_j$-chart'', for each $j=1,\ldots,l$. For example, the $u_1$-chart has coordinates
$(\bu',\bv') = (u_1',\ldots,u_s',v_1',\ldots,v_{n-s}')$ given by
\begin{equation*}
u_1' = u_1,\qquad u_i' = \left\{ \begin{array}{l l} 
                                       u_i/u_1, & 2\leq i\leq k\\
                                       u_i ,      & i > k
                                       \end{array}\right.,\qquad 
                    v_j' = \left\{ \begin{array}{l l}
                                        v_j/u_1 & j\leq l\\
                                        v_j        & j > l
                                        \end{array}\right.,
\end{equation*}                                         
and the $v_1$-chart has coordinates $(\bu',\bv') = (u_1',\ldots,u_{s+1}',v_2',\ldots,v_{n-s}')$ given by
\begin{equation*}
u_i' = \left\{ \begin{array}{l l} 
                                       u_i/v_1, & 1\leq i\leq k\\
                                       u_i ,      & k < i\leq s
                                       \end{array}\right.,\qquad 
                    u_{s+1}' = v_1,\qquad
                    v_j' = \left\{ \begin{array}{l l}
                                        v_j/v_1 & 2\leq j\leq l\\
                                        v_j        & j > l
                                        \end{array}\right..
\end{equation*}                             
                                       
We compute
\begin{equation*}
\log \Jac (\s\circ\be) = ((\log \Jac \s)\circ\be)\cdot B,
\end{equation*}
where $B$ denotes the $n \times n$ matrix
\smallskip
\begin{equation*}
\begin{pmatrix}
A & 0\\
0 & I
\end{pmatrix} \cdot \Jac \be \cdot \begin{pmatrix}
                                             C & 0\\
                                             0 & D
                                             \end{pmatrix},
\end{equation*} 
with $A$ (respectively, $C$) the $s \times s$ diagonal matrix with diagonal entries $1/u_i$ 
(respectively, $u_i'$), $I$ the identity matrix of order $n-s$, and $D$ a diagonal matrix of order $n-s$
with first entry $1$ in the $u_1$-chart or $u_{s+1}'$ in the $v_1$-chart, and remaining diagonal entries $1$.                                 
We make the following simple but important observations.

\begin{remarks}\label{rem:fit} (1) $\det B = \exc^l$, where $\exc$ denotes the exceptional divisor of $\s$;
i.e., $\exc = u_1'$ in the $u_1$-chart or $u_{s+1}'$ in the $v_1$-chart.

\smallskip
(2) If $\be$ is a combinatorial blowing-up ($l=0$), then $B$ is invertible, so that every minor of $\log \Jac (\s\circ\be)$
is a linear combination of minors of the same order of $(\log \Jac \s)\circ\be$, and vice-versa.
\end{remarks}

Therefore, in the notation above, we have the following.

\begin{lemma}\label{lem:fit}\begin{enumerate}\item
$\cF_0(\s\circ\be) = exc^l \cdot \be^*\cF_0(\s)$.
\item
If $\be$ is a combinatorial blowing-up, then $\cF_k(\s\circ\be) = \be^*\cF_k(\s)$, $k=0,\ldots$\,.
\end{enumerate}
\end{lemma}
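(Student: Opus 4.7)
The plan is to deduce both assertions directly from the factorization
\begin{equation*}
\log \Jac(\s\circ\be) = ((\log \Jac \s)\circ\be)\cdot B
\end{equation*}
displayed just before the lemma, by applying the Cauchy--Binet formula. Since $\log\Jac\s$ is an $N\times n$ matrix (with $N = \dim M_0$) while $B$ is $n\times n$, for any order-$(n-k)$ minor on row set $I \subseteq \{1,\ldots,N\}$ and column set $J \subseteq \{1,\ldots,n\}$, Cauchy--Binet will give
\begin{equation*}
\det\bigl(\log\Jac(\s\circ\be)\bigr)_{I,J}
= \sum_{|K|=n-k}\det\bigl((\log\Jac \s)\circ\be\bigr)_{I,K}\cdot\det B_{K,J}.
\end{equation*}
This already shows in one stroke that each generator of $\cF_k(\s\circ\be)$ is an $\cO_{X'}$-linear combination of pulled-back generators of $\cF_k(\s)$, i.e., $\cF_k(\s\circ\be) \subseteq \be^*\cF_k(\s)$ for every $k$.

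For part (1), I would specialize to $k=0$, where the minors are of top order $n$. Then both $J$ and $K$ are forced to be the full index set $\{1,\ldots,n\}$, the Cauchy--Binet sum collapses to a single term, and the identity reduces to the determinantal product rule
\begin{equation*}
\det\bigl(\log\Jac(\s\circ\be)\bigr)_{I,\bullet}
= \det\bigl((\log\Jac \s)\circ\be\bigr)_{I,\bullet}\cdot \det B.
\end{equation*}
Combined with $\det B = \exc^l$ from Remark \ref{rem:fit}(1), this identifies the generators of $\cF_0(\s\circ\be)$ with $\exc^l$ times the corresponding generators of $\be^*\cF_0(\s)$, giving the asserted equality.

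For part (2), combinatoriality means $l=0$, so by Remark \ref{rem:fit}(2) the matrix $B$ is invertible over $\cO_{X'}$. The ``$\subseteq$'' inclusion is already in hand from the first paragraph, and I would obtain the reverse inclusion by running the same Cauchy--Binet argument on the rewritten factorization $(\log\Jac\s)\circ\be = \log\Jac(\s\circ\be)\cdot B^{-1}$, whose right-hand factor has entries in $\cO_{X'}$ precisely because $B$ is unimodular.

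I do not expect a genuine obstacle here: the content of the lemma is just that an $n\times n$ matrix factor interacts with order-$(n-k)$ minors via Cauchy--Binet, and with top-order ($k=0$) minors via the multiplicative determinant rule. The only mild care needed is to distinguish ``$B$ invertible over $\cO_{X'}$'' from the weaker statement that $B$ becomes invertible only after inverting the exceptional divisor; without the stronger form (supplied by Remark \ref{rem:fit}(2)), the reverse inclusion in part (2) would fail, and only an inclusion of the type appearing in part (1) would survive.
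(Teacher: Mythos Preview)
Your proposal is correct and follows essentially the same approach as the paper, which treats the lemma as an immediate consequence of Remarks~\ref{rem:fit}: part~(1) from the determinant product rule together with $\det B = \exc^l$, and part~(2) from the observation that when $B$ is invertible the minors of each side are linear combinations of the minors of the other. Your use of Cauchy--Binet simply makes explicit the linear-algebra fact already invoked in Remark~\ref{rem:fit}(2).
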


\begin{theorem}\label{thm:fit0}
Given a reduced variety $X_0$, there is a resolution of singularities $\s: (X,E) \to (X_0, \Sing X_0)$ 
such that $\s$ is a composite of admissible blowings-up and $\cF_0(\s)$ is a principal ideal generated
locally by a monomial in generators of the components of $E$ (we will say that $\s$ is a
\emph{principal $E$-monomial ideal}, or a \emph{principal monomial ideal} if $E$ is clear from the context).

Moreover, if $\s$ is a resolution of singularities such that $\cF_0(\s)$ is a principal monomial ideal,
and $\be: (X',E') \to (X,E)$ is an admissible blowing-up, then $\cF_0(\s\circ\be)$ is a principal monomial ideal.
\end{theorem}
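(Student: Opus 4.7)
The plan is to prove the two assertions separately; the ``moreover'' part follows almost immediately from Lemma \ref{lem:fit}(1), while the existence statement reduces to principalization of ideal sheaves.

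For the stability assertion, I would argue directly from Lemma \ref{lem:fit}(1). If $\cF_0(\s)$ is principal $E$-monomial and $\be:(X',E')\to(X,E)$ is admissible, then
$\cF_0(\s\circ\be) = \exc^{\,l}\cdot\be^{*}\cF_0(\s)$,
where $\exc$ is the exceptional divisor of $\be$ and $l\geq 0$. Because $\be$ is admissible for $E$, the pull-back of a defining equation of any component of $E$ is itself a monomial in the components of $E'$, so $\be^{*}\cF_0(\s)$ is principal $E'$-monomial; multiplication by the principal $E'$-monomial $\exc^{\,l}$ preserves this property.

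For existence, I would begin with a resolution of singularities $\s_0:(X_1,E_1)\to(X_0,\Sing X_0)$ realised as a composite of smooth admissible blowings-up, which is available by the standard theory \cite{BMinv,BMfunct}. The key preliminary observation is that over $X_1\setminus\supp E_1$ the map $\s_0$ is an isomorphism onto an open subset of the smooth locus of $X_0$, so (working in a local embedding $X_0\hookrightarrow M_0$) $\log\Jac\,\s_0=\Jac\,\s_0$ has maximal rank there and $\cF_0(\s_0)=\cO_{X_1}$; hence $\cosupp\cF_0(\s_0)\subseteq\supp E_1$. I would then invoke principalization of coherent ideal sheaves by smooth admissible blowings-up \cite{BMinv,BMfunct}, applied to $\cF_0(\s_0)$: because the cosupport lies inside $\supp E_1$, the principalization algorithm can be arranged so that every centre is contained in the successive transforms of $\supp E_1$. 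Denoting the resulting composite by $\pi=\be_m\circ\cdots\circ\be_1:(X,E)\to(X_1,E_1)$, with $E$ the transform of $E_1$, the composite $\s:=\s_0\circ\pi$ is again a composite of admissible blowings-up; since $\pi$ is an isomorphism away from $\supp E_1$, $\s$ remains an isomorphism over $X_0\setminus\Sing X_0$, and $\pi^{*}\cF_0(\s_0)$ is principal $E$-monomial.

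To conclude that $\cF_0(\s)$ itself is principal $E$-monomial, I would iterate Lemma \ref{lem:fit}(1) along the tower $\be_1,\ldots,\be_m$: each application multiplies by a power of the corresponding exceptional divisor, giving
$\cF_0(\s) = \cM\cdot\pi^{*}\cF_0(\s_0)$,
where $\cM$ is a product of pull-backs of powers of the successive exceptional divisors of $\pi$, hence a principal $E$-monomial ideal; together with $\pi^{*}\cF_0(\s_0)$ this finishes the argument. The only delicate point I anticipate is the bookkeeping in the previous paragraph, namely arranging that every centre of blowing-up in $\pi$ lies inside the successive transforms of $\supp E_1$ so that $\s$ does not fail to be an isomorphism over $X_0\setminus\Sing X_0$. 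This is standard for the functorial principalization of \cite{BMinv,BMfunct} given the containment $\cosupp\cF_0(\s_0)\subseteq\supp E_1$, but it is where one has to use the specific form of the algorithm rather than a black-box principalization statement.
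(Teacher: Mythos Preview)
Your proposal is correct and follows essentially the same approach as the paper's proof: both use Lemma~\ref{lem:fit}(1) for the stability assertion, and for existence both start with a resolution of singularities, principalize $\cF_0$ by further admissible blowings-up, and invoke Lemma~\ref{lem:fit}(1) again to track $\cF_0$ through the principalization. Your version simply spells out more of the details that the paper leaves implicit (the cosupport containment $\cosupp\cF_0(\s_0)\subseteq\supp E_1$, the iterated application of Lemma~\ref{lem:fit}(1), and the observation that the principalization centres lie in the successive transforms of $\supp E_1$).
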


\begin{proof}
The second assertion is immediate from Lemma \ref{lem:fit}(1). Suppose that  $\s$ is a resolution of singularities
of the variety $X_0$ by admissible blowings-up. We can then apply resolution of singularities of an ideal to 
$\cF_0(\s)$, and the first assertion again follows from Lemma \ref{lem:fit}(1).
\end{proof}

\subsection{Regularization of the Gauss mapping}\label{subset:Gauss}
Given a smooth variety $M_0$ and $n \leq \dim M_0$, let $G(n,M_0)$ denote the \emph{Grassmann
bundle} of $n$-dimensional linear subspaces of the tangent spaces to $M_0$ at every point. If
$X_0 \hookrightarrow M_0$ and $Y_0 = \Sing X_0$, then there is a natural \emph{Gauss mapping}
$G_{X_0}: X_0\backslash Y_0 \to G(n,M_0)$, where $n = \dim X_0$, given by
$a \mapsto$ tangent space of $X_0$ at $a$.

Theorem \ref{thm:fit0} also provides a \emph{regularization} of the Gauss mapping:

\begin{theorem}\label{thm:Gauss}
Let $\s: (X,E) \to (X_0,Y_0)$ denote a resolution of singularities. Then the following are 
equivalent:
\begin{enumerate}
\item the pull-back $\s^*G_{X_0}$ extends to a regular (or analytic) morphism on $X$; 
\item $\s^*\Om_{X_0}^1$ is a locally free $\cO_X$-module of rank $n$;
\item the Fitting ideal $\cF_0(\s)$ is a principal ideal (not necessarily monomial).
\end{enumerate}
\end{theorem}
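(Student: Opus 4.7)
The plan is to prove (2) $\iff$ (3) by standard Fitting ideal algebra applied to the short exact sequence $0 \to \s^*\Om_{X_0}^1 \to \Om_X^1(\log E) \to \Phi \to 0$ from \eqref{eq:phi}, and to prove (1) $\iff$ (2) using the functorial description of the Grassmann bundle together with a density and torsion-freeness argument. All the arguments are local, so I work in the stalk $\cO_{X,a}$ at an arbitrary point $a \in X$, with a local embedding $X_0|_V \hookrightarrow M_0$, $\dim M_0 = N$, as in the paper.

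For (2) $\iff$ (3): from \S\ref{subsec:fit}, $\Phi$ has a finite presentation by the transpose of $\log\Jac\s$, so $\cF_0(\s) = \cF_0(\Phi)$ is locally generated by the $n\times n$ minors of $\log\Jac\s$. Both $\s^*\Om_{X_0}^1$ and $\Om_X^1(\log E)$ have generic rank $n$, so $\Phi$ is torsion and any local generator $g$ of a principal $\cF_0(\s)_a$ is a non-zero-divisor in $\cO_{X,a}$. Direction (2) $\Rightarrow$ (3) is immediate: the local inclusion $\s^*\Om_{X_0}^1 \hookrightarrow \Om_X^1(\log E)$ is given by an $n \times n$ matrix $B$ and $\cF_0(\s) = (\det B)$. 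For (3) $\Rightarrow$ (2), writing each $n\times n$ minor of $\log\Jac\s$ as $g h_i$ and $g = \sum c_i g h_i$, the relation $\sum c_i h_i = 1$ (valid since $g$ is a non-zero-divisor) forces some $h_i$ to be a unit; the corresponding $n \times n$ submatrix $B$ of $\log\Jac\s$ satisfies $\det B = g \cdot (\text{unit})$. Its $n$ rows are $\cO_{X,a}$-linearly independent in $\Om_X^1(\log E)$ and span a rank-$n$ free submodule $M' \subset \s^*\Om_{X_0}^1$. For any other row $w$ of $\log\Jac\s$, Cramer's rule gives $w = \sum_\alpha (\det B^{(\alpha)}/\det B)\, b_\alpha$, where $B^{(\alpha)}$ is obtained from $B$ by replacing its $\alpha$-th row with $w$; each $\det B^{(\alpha)}$ is another $n\times n$ minor, hence lies in $(\det B)$, so the coefficients are regular and $\s^*\Om_{X_0}^1 = M'$ is locally free of rank $n$.

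For (1) $\iff$ (2): the Grassmann bundle $G(n,M_0) \to M_0$ represents the functor sending $(f \colon Y \to M_0)$ to rank-$n$ locally free quotients of $f^*\Om_{M_0}^1$. The Gauss map $G_{X_0}$ on $X_0\setminus Y_0$ corresponds to the quotient $\Om_{M_0}^1 \twoheadrightarrow \Om_{X_0}^1$ there, so $\s^*G_{X_0}$ corresponds to $\s^*\Om_{M_0}^1 \twoheadrightarrow \s^*\Om_{X_0}^1$ on $X\setminus\s^{-1}(Y_0)$. Assuming (2), the same quotient on all of $X$ is itself a rank-$n$ locally free quotient, producing the required extension. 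Conversely, an extension of $\s^*G_{X_0}$ is a rank-$n$ locally free quotient $\s^*\Om_{M_0}^1 \twoheadrightarrow F$ on $X$ agreeing with $\s^*\Om_{M_0}^1 \twoheadrightarrow \s^*\Om_{X_0}^1$ over the dense open $X\setminus\s^{-1}(Y_0)$. Its kernel $K_F$ is locally a direct summand of the free sheaf $\s^*\Om_{M_0}^1$, and any local section of $\ker(\s^*\Om_{M_0}^1 \to \Om_X^1)$ has its component complementary to $K_F$ vanishing on the dense open set (where $K_F$ coincides with this kernel), hence vanishing everywhere because a section of a locally free sheaf on a reduced Noetherian scheme that vanishes on a dense open is zero. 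Therefore $\ker(\s^*\Om_{M_0}^1 \to \Om_X^1) \subset K_F$ globally, giving a surjection $\s^*\Om_{X_0}^1 \twoheadrightarrow F$ that is an isomorphism on a dense open set. Since $\s^*\Om_{X_0}^1$ is torsion-free (as a subsheaf of the locally free $\Om_X^1$), the kernel of this surjection is torsion and hence zero, so $\s^*\Om_{X_0}^1 \cong F$ is locally free of rank $n$.

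I expect the main obstacle to be the direction (3) $\Rightarrow$ (2): principalness of $\cF_0(\s)$ is a priori substantially weaker than local freeness of $\s^*\Om_{X_0}^1$, and the argument requires identifying in the local ring an honest $n\times n$ submatrix of $\log\Jac\s$ whose determinant generates $\cF_0(\s)$ (rather than merely contributing to it), then verifying via Cramer's rule that no additional rows of $\log\Jac\s$ escape the span of its rows. The equivalence (1) $\iff$ (2) is more formal, relying on the functor-of-points description of $G(n,M_0)$ together with torsion-freeness of $\s^*\Om_{X_0}^1$, and (2) $\Rightarrow$ (3) is immediate from the definition of Fitting ideals.
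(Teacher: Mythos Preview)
Your proof is correct, but it organizes the equivalences differently from the paper. The paper argues (1)$\iff$(2) by the standard correspondence between locally free sheaves and vector bundles, and then proves (1)$\iff$(3) directly via the Pl\"ucker embedding: the Gauss map followed by the Pl\"ucker embedding into $\mathbb{P}^{\binom{N}{n}-1}$ is given in homogeneous coordinates by the $n\times n$ minors of the Jacobian, and a rational map from a smooth variety to projective space extends to a regular morphism precisely when the ideal generated by its homogeneous coordinates is locally principal.

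You instead prove (2)$\iff$(3) by a purely algebraic argument (invariance of Fitting ideals under change of presentation for (2)$\Rightarrow$(3), and Cramer's rule over the local domain $\cO_{X,a}$ for (3)$\Rightarrow$(2)), and then (1)$\iff$(2) by the functor-of-points description of the Grassmann bundle together with a density/torsion-freeness argument. Your route has the advantage of making the delicate implication (3)$\Rightarrow$(2) fully explicit, and your (1)$\iff$(2) argument spells out why the locally free quotient produced by an extended Gauss map must coincide with $\s^*\Om_{X_0}^1$ (a point the paper leaves implicit). The paper's route, on the other hand, is shorter and more geometric: the Pl\"ucker embedding collapses (1)$\iff$(3) into a single observation about extending maps to projective space, bypassing the Cramer computation entirely.
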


\begin{proof} The equivalence of (1) and (2) is a consequence of the fact that a sheaf
of $\cO_X$-modules is locally free if and only if it defines a vector bundle (see \cite[Exercise II.5.18]{Ha}).
To see that (1)$\iff$(3), note that the Grassmannian $\text{Grass}(n,N)$ of
$n$-planes in $\IC^N$; i.e., the space of linear injections $\by = \la(\bx)$ from $\IC^n \to \IC^N$, is the 
complex projective space of dimension $\binom{N}{n} -1$ with homogeneous coordinates
given by the minors of order $n$,
$$
\frac{\p(y_{i_1},\ldots,y_{i_n})}{\p(x_1,\ldots,x_n)},\quad i_1 < \dots < i_n,
$$
of the Jacobian matrix of $\la$ with respect to coordinate systems $\bx = (x_1,\ldots,x_n)$,
$\by = (y_1,\ldots,y_N)$ of $\IC^n$, $\IC^N$ (respectively).
\end{proof}

\begin{remark}\label{rem:Gauss}
Given $a \in X$, condition (1) of Theorem \ref{thm:Gauss} can be used to choose coordinates
for $M_0$ at $\s(a)$ such that each $d\s_i$, $i>n$, is in the submodule generated by
$d\s_1,\ldots,d\s_n$ at $a$. We will not use this result, but have included Theorem \ref{thm:Gauss}
for historical reasons.
\end{remark}

\section{Logarithmic Fitting ideals and desingularization of the cotangent sheaf}\label{sec:fitHP}
We continue to use the notation of Section \ref{sec:logfit}.

\subsection{Equivalence of the main conjecture and principalization of logarithmic Fitting ideals}
\label{subsec:fitHP}

In this subsection, we will prove Theorem \ref{thm:fitHP}.


According to Theorem \ref{thm:fit0}, there is a resolution of singularities $\s$ such that $\cF_0(\s)$ is
a principal monomial ideal, and the latter condition is stable by admissible blowings-up. Although
Theorem \ref{thm:fitHP} may seem attractive because our main conjecture \ref{conj:main} is obtained ``in one shot''
from the condition (1), the Fitting ideals $\cF_k(\s)$, $k > 0$, do not enjoy the stability property of
$\cF_0(\s)$, so in practice it may be difficult to obtain condition (1) step-by-step. (Example. Consider the
morphism $\s(u,v,w) = (u^2,u^3v,u^4w)$ and the admissible blowing-up with centre $(u = v+w^2 = 0)$.)

We will show, nevertheless, that, given a resolution of singularities $\s$ by admissible blowings-up,
then, after further admissible blowings-up, $\cF_{n-1}(\s)$ is a principal monomial ideal (Theorem \ref{thm:start}).
In fact, we will show that, after further admissible blowings-up, $\cF_{n-1}(\s),\ldots,\cF_{n-(r+1)}(\s)$ are principal 
monomial ideals at every point of log rank $r$. This seems useful as a way to begin an inductive proof
of our main conjecture. In particular, Theorems \ref{thm:fit0}, \ref{thm:fitHP} and \ref{thm:start} immediately establish
Conjecture \ref{conj:main} in the case that $\dim X_0 \leq 2$ (see Corollary \ref{cor:HP2}). 
We will use Theorem \ref{thm:start} to begin
an inductive proof in the $3$-dimensional case, in Section \ref{sec:prepnorm}.

It will be useful to have the more precise local statement of the
following lemma, which immediately implies Theorem \ref{thm:fitHP}. The proof of Lemma 
\ref{lem:fitHP} will include the precise relationship between the exponents of monomials generating
the log Fitting ideals, and the exponents appearing in the differential monomials involved in
Hsiang-Pati coordinates.

\begin{lemma}\label{lem:fitHP}
Let $\s: X \to M_0$ be a morphism between smooth varieties. Say $n=\dim X$, $N=\dim M_0$.
Suppose that the \emph{critical locus} of $\s$ (i.e., $\{a \in X: \rk_a \s < n\}$) is the support of a
simple normal crossings divisor $E$ on $X$. Let $a \in X$. Then, for each $k = 1,\ldots,n$ the
following are equivalent:
\smallskip
\begin{enumerate}
\item The logarithmic Fitting ideals $\cF_{n-1}(\s), \ldots, \cF_{n-k}(\s)$ are all principal $E$-monomial ideals at $a$.
\smallskip
\item There are (analytic or \'etale) coordinates $(\bu,\bv)$ of $X$ at $a$ adapted to $E$, and coordinates $\bz =
(z_1,\ldots,z_N)$ of $M_0$ at $\s(a)$, such that, writing $\s = (\s_1,\ldots,\s_N)$ with respect to the coordinates $\bz$,
\smallskip
\begin{enumerate}
\item the submodule $\cM_k$ of $\Om_{X,a}^1$ generated by the
pull-backs $\s^*dz_m = d\s_m$, $m=1,\ldots,k$, is also generated by differential monomials 
$d(\bu^{\bal_i})$, $i = 1,\ldots,l_k$, and $d(\bu^{\bbe_j}v_j)$, $j=1,\ldots,k-l_k$, for some $l_k \leq k$,
satisfying conditions as in Conjecture \ref{conj:main};
\smallskip
\item for each $m > k$, $\s_m = g_m + S_m$, where $d g_m \in \cM_k$ and $S_m$ is divisible
by $\bu^{\max\{\bal_{l_k}, \bbe_{k-l_k}\}}$.
\end{enumerate}
\end{enumerate}
\end{lemma}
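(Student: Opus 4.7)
My plan is to argue by induction on $k$, with the forward direction (2)$\Rightarrow$(1) being a direct determinantal computation and the reverse direction (1)$\Rightarrow$(2) being the main content.

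For (2)$\Rightarrow$(1): given Hsiang-Pati coordinates realizing (2a) and (2b), I would use (2b) to perform row operations on the transpose of $\log\Jac\s$, replacing each row $d\s_m$ (for $m>k$) by $dS_m$, which in the logarithmic basis has all entries divisible by $\bu^{\max\{\bal_{l_k},\bbe_{k-l_k}\}}$. The first $k$ rows generate $\cM_k$, so after further row reduction they can be taken to be the differentials of the Hsiang-Pati monomials $d(\bu^{\bal_i})$ and $d(\bu^{\bbe_j}v_j)$, which in the log basis have an explicit form: $d(\bu^{\bal})$ has entries $\al_p \bu^{\bal}$ in the $du_p/u_p$ columns and $0$ in the $dv_q$ columns, while $d(\bu^{\bbe}v)$ has $\be_p v \bu^{\bbe}$ in the $du_p/u_p$ columns and $\bu^{\bbe}$ in the distinguished $dv$ column. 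The $\IQ$-linear independence of the $\bal_i$ together with the total ordering of $\{\bal_i,\bbe_j\}$ then singles out, for each $j\leq k$, a distinguished $j\times j$ minor whose monomial factor divides every other such minor and every minor involving a row $m>k$, yielding principal monomiality of $\cF_{n-j}(\s)$.

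For (1)$\Rightarrow$(2), I induct on $k$. In the base case $k=1$, $\cF_{n-1}(\s)$ is the ideal generated by all entries of $\log\Jac\s$; principal monomiality $(\bu^{\bla_1})$ forces every log-partial of every $\s_m$ to lie in $\bu^{\bla_1}\cO_{X,a}$, with at least one equal to $\bu^{\bla_1}$ times a unit. Analyzing the Euler derivations $u_i\p/\p u_i$ and the $\p/\p v_j$ simultaneously and translating so that $\s(a)=0$, I obtain $\s_m = \bu^{\bla_1}h_m(\bu,\bv)$ for each $m$. A linear change of target coordinates puts the distinguished unit entry into the first row, and two cases arise: either it lies in a $dv_j$ column, so $\p\s_1/\p v_j = \bu^{\bla_1}\cdot\text{unit}$, in which case an implicit function / Weierstrass change produces a source coordinate $v_1$ with $\s_1 = \bu^{\bbe_1}v_1$, or it lies in a $du_i/u_i$ column, in which case $h_1$ is already a unit and a target adjustment gives $\s_1 = \bu^{\bal_1}$. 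Either way, the first Hsiang-Pati generator of $\cM_1$ is identified, and (2b) at level $k=1$ follows from the common factor $\bu^{\bla_1}$. For the inductive step, assume (2) holds at level $k-1$, and pass to the quotient $\Om_X^1(\log E)_a/\cM_{k-1}$; by (2b) at level $k-1$, the images $\overline{d\s_m}$ for $m\geq k$ have log-coefficients divisible by $\bu^{\max\{\bal_{l_{k-1}},\bbe_{k-1-l_{k-1}}\}}$. Principal monomiality of both $\cF_{n-k}(\s)$ and $\cF_{n-k+1}(\s)$, combined with the triangular form of $\cM_{k-1}$, forces the collection of these image entries to share a leading principal monomial factor $\bu^{\bla_k}$, and the base-case argument applied to the quotient extracts the next generator $d(\bu^{\bal_{l_k}})$ or $d(\bu^{\bbe_{k-l_k}}v_{k-l_k})$ together with the coordinate changes that realize it. The total ordering of $\{\bal_i,\bbe_j\}$ follows because each new leading monomial is itself divisible by $\bu^{\max\{\bal_{l_{k-1}},\bbe_{k-1-l_{k-1}}\}}$ by inductive (2b).

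The main obstacle will be the inductive step, and specifically three bookkeeping issues: (a) carrying out the target and source coordinate changes at stage $k$ without disturbing the Hsiang-Pati generators already fixed at stages $1,\ldots,k-1$, which will force me to restrict the admissible changes to those lying in an ideal determined by $\cM_{k-1}$; (b) executing the Weierstrass-type step in the presence of $E$, so that the newly introduced source coordinate $v_{k-l_k}$ is still transverse to $E$ and so that $(\bu,\bv)$ remains adapted to $E$; and (c) translating principal monomiality of the original Fitting ideals $\cF_{n-k}(\s)$ into principal monomiality of the Fitting ideal of the quotient module $\Om_X^1(\log E)_a/\cM_{k-1}$, which is subtle because Fitting ideals do not generally commute with quotients, and will require explicit use of the triangular structure of the log-Jacobian rows corresponding to the already-constructed generators of $\cM_{k-1}$.
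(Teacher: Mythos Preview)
Your overall inductive architecture matches the paper's, and your (2)$\Rightarrow$(1) sketch is essentially the paper's computation. But in (1)$\Rightarrow$(2) there is a real gap, and it is precisely the one hiding behind your obstacle (a).

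In your base case you write that when the distinguished unit entry lies in a $du_i/u_i$ column, ``$h_1$ is already a unit and a target adjustment gives $\s_1 = \bu^{\bal_1}$.'' This is not correct: if $\s_1 = \bu^{\bal_1}U(\bu,\bv)$ with $U$ a unit depending on the source variables, no change of the target coordinate $z_1$ will strip off $U$. A \emph{source} change is needed, and already here one wants the device the paper uses throughout: choose $\bep\in\IQ^s$ with $\langle\bal_1,\bep\rangle=1$ and set $\ou_h = U^{\ep_h}u_h$, so that $\obu^{\bal_1}=U\bu^{\bal_1}=\s_1$.

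The same issue recurs, more seriously, at the inductive step. Suppose stages $1,\ldots,k$ are done and the new minimal monomial is $\bu^{\bal}$ with $\bal$ independent of $\bal_1,\ldots,\bal_{l_k}$, so that $\s_{k+1}=g_{k+1}+\bu^{\bal}S$ with $S$ a unit. One must absorb $S$ into the $u$-coordinates \emph{without disturbing} $\bu^{\bal_i}$ and $\bu^{\bbe_j}v_j$. The paper takes $\bep$ shortest with $\langle\bal_i,\bep\rangle=0$ for $i\le l_k$ and $\langle\bal,\bep\rangle=1$, and sets $\ou_h=S^{\ep_h}u_h$, $\ov_j=S^{-\langle\bbe_j,\bep\rangle}v_j$; then $\obu^{\bal_i}=\bu^{\bal_i}$, $\obu^{\bbe_j}\ov_j=\bu^{\bbe_j}v_j$, and $\obu^{\bal}=S\bu^{\bal}$. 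Your proposal names obstacle (a) but offers no such construction, and ``restrict the admissible changes to those lying in an ideal determined by $\cM_{k-1}$'' does not point toward this multiplicative trick.

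A second difference: the paper does not pass to the quotient $\Om^1_X(\log E)_a/\cM_{k-1}$ and so never confronts your obstacle (c). It works directly with the wedge product $\Om_k := \bigwedge_i d(\bu^{\bal_i})\wedge\bigwedge_j d(\bu^{\bbe_j}v_j)$ and computes $\Om_k\wedge d(\bu^{\bbe}\bv^{\bg})$ monomial by monomial. Since the log-basis coefficients of $\Om_k\wedge d\s_m$ are exactly the $(k{+}1)$-minors of $\log\Jac\s$ on rows $1,\ldots,k,m$, this sidesteps the Fitting-ideal-of-quotient issue entirely and pinpoints which monomial of which $\s_m$ realizes the generator $\bu^{\bg_{k+1}}$ of $\cF_{n-(k+1)}$.
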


\begin{proof} (2)$\implies$(1). Given ``partial'' Hsiang-Pati 
coordinates at a point $a$ of $X$, as in (2), each log Fitting ideal $\cF_{n-m}(\s)$, $m=1,\ldots,k$,  at $a$ 
is generated by
a minor of order $m$ of the matrix with rows given by the coefficients of the differential monomials
$d(\bu^{\bal_i})$ and $d(\bu^{\bbe_j}v_j)$ with respect to the logarithmic basis \eqref{eq:logbasis}. Each row is a
vector $(\bxi,\bta)$, where the components of $\bxi$ (respectively, $\bta$) are log derivatives with respect to
the coordinates $u_i$ (respectively, derivatives with respect to the $v_j$). The row vector 
corresponding to $\bu^{\bal_i}$
(respectively, to $\bu^{\bbe_j}v_j$) is $\bu^{\bal_i}(\bal_i,0)$ (respectively, $\bu^{\bbe_j}(v_j\bbe_j,(j))$, where 
$\bta = (j)$ denotes the vector with $1$ in the $j$'th place and $0$ elsewhere).

It is easy to see that each $\cF_{n-m}$ is generated by $\bu^{\bg_m}$, where $\bg_m$ is the sum of 
the first $m$
elements of $\{\bal_i,\bbe_j\}$ (as an ordered set).
\medskip

\noindent
(1)$\implies$(2). Assume that $\cF_{n-m} = (\bu^{\bg_m})$, $m=1,\ldots,k$, where $\bg_1 \leq \cdots \leq \bg_k$.
Write $\s$ as $\s = (\s_1,\ldots,\s_N)$ with respect to local
coordinates $\bz = (z_1,\ldots,z_N)$ of $M_0$ at $\s(a) = 0$. 

We will prove (2) (in fact, a stronger statement) by induction on $k$. 
Given $k\geq 0$, assume there are local coordinates $(\bu,\bv)$ at $a=0$ such that:
\smallskip
\begin{enumerate}
\item[(1)$_k$] The $\cO_{X,a}$-module generated by $d\s_1,\ldots,d\s_k$ is generated by $d(\bu^{\bal_i})$,
$i=1,\ldots,l_k$, and $d(\bu^{\bbe_j}v_j)$, $j=1,\ldots,k-l_k$, where $\{\bal_i,\bbe_j\}$ is totally ordered and
$\bal_1,\ldots,\bal_{l_k}$ are linearly independent over $\IQ$.
\smallskip

\item[(2)$_k$] For all $m=1,\ldots,k$, 
$$
\s_m = g_m + S_m
$$
(sum of analytic functions, or regular functions in the \'etale chart), where,
\smallskip
\begin{enumerate}
\item for every monomial $\bu^{\bbe}\bv^{\bg}$ appearing (i.e., with nonzero coefficient) in the formal
expansion of $g_m$ at $a=0$, $(\bbe,\bg)$ is linearly dependent on the $(\bal_i,0)$,  $i=1,\ldots,l_{m-1}$, and 
$(\bbe_j,(j))$, $j=1,\ldots,m-1-l_{m-1}$, over $\IQ$, and $\bbe \geq \bal_i,\, \bbe_j$, for all such $i,j$;
\smallskip
\item $S_m = \bu^{\bal_{l_{m-1}+1}}$ or $S_m = \bu^{\bbe_{m-1-l_{m-1} +1}} v_{m-1-l_{m-1} +1}$.
\smallskip
\end{enumerate}
\smallskip
\noindent
Note that, if $(\bbe,\bg) = \sum_{i=1}^{l_{m-1}} q_i  (\bal_i,0) + \sum_{j=1}^{m-1-l_{m-1}} r_j (\bbe_j,(j))$,
where the $q_i, r_j \in \IQ$ and $r_j \neq 0$ for some $j$, then $\bbe \geq \bbe_j \implies (\bbe,\bg) \geq
(\bbe_j,(j))$, so (by Remark \ref{rem:lindep}), $dg_m$ is in the $\cO_{X,a}$-submodule generated by
the $d\bu^{\bal_i}$ and $d(\bu^{\bbe_j}v_j)$.
\smallskip

\item[(3)$_k$] For each $m=k+1,\ldots, N$,
$$
\s_m = g_{mk} + \bu^{\bde_{mk}}S_{mk},
$$
where $g_{mk}$ and $S_{mk}$ are analytic or regular functions such that
\smallskip
\begin{enumerate}
\item for every monomial $\bu^{\bbe}\bv^{\bg}$ appearing in the formal
expansion of $g_{mk}$ at $a=0$, $(\bbe,\bg)$ is linearly dependent on the $(\bal_i,0)$,  $i=1,\ldots,l_k$, and 
$(\bbe_j,(j))$, $j=1,\ldots,k-l_k$, over $\IQ$, and $\bbe \geq \bal_i,\, \bbe_j$, for all such $i,j$ (so $dg_{mk}$
is in the submodule generated by $d\s_1,\ldots,d\s_k$);
\smallskip
\item $\bde_{mk} \geq \max\{\bal_{l_k}, \bbe_{k-l_k}\}$ and, if $S_{mk}$ is a unit, then $\bde_{mk}$ is
linearly independent of $\bal_1,\ldots,\bal_{l_k}$;
\smallskip
\item $S_{mk}$ is divisible by no $u_i$ (unless $S_{mk} = 0$).
\end{enumerate}
\smallskip

\item[(4)$_k$] $\sum_{i=1}^{l_k} \bal_i + \sum_{j=1}^{k-l_k} \bbe_j = \bg_k$.
\end{enumerate}
\smallskip

The assumptions above (except for (3)$_k$(c)) are all empty if $k=0$.

Note that the generator $\bu^{\bg_k}$ of $\cF_{n-k}$ is given by a coefficient of
$d\s_1\wedge \cdots \wedge d\s_k$ (with respect to the log basis). It is easy to see that
the generator $\bu^{\bg_{k+1}}$ of $\cF_{n-(k+1)}$ is given by a coefficient of
$$
d\s_1 \wedge \cdots \wedge d\s_k \wedge d\s_{m_0},
$$
for some $m_0 \geq k+1$ (e.g., using elementary row and column operations on $\log \Jac \s$).

Set
$$
\Om_k := \bigwedge_{i=1}^{l_k} d(\bu^{\bal_i}) \wedge \bigwedge_{j=1}^{k-l_k} d(\bu^{\bbe_j}v_j).
$$
Then
$$
\Om_k = \bu^{\bg_k}\left\{ \sum_I c_I \bigwedge_I\hspace{-.2em}\frac{du_i}{u_i} \wedge dv_1 \wedge \cdots \wedge dv_{k-l_k} 
               + \eta_k\right\},
$$
where
\begin{enumerate}
\item[] $I$ runs over all sets $\{i_1,\ldots,i_{l_k}\}$ such that $1 \leq i_1 < \cdots < i_{l_k} \leq s$,
\smallskip
\item[] $\displaystyle{ \bigwedge_I\hspace{-.1em}\frac{du_i}{u_i} := \frac{du_{i_1}}{u_{i_1}} \wedge \cdots \wedge
                                 \frac{du_{i_k}}{u_{i_k}} }$,
\item[] $c_I$ is a unit, for some $I$,
\item[] $\eta_k \in (v_1,\dots, v_{k-l_k})\cdot\Om_{X,a}$.
\end{enumerate}

Let us compute $\Om_k \wedge d(\bu^{\bbe}\bv^{\bg})$, where $\bu^{\bbe}\bv^{\bg}$ is a monomial 
with $\bbe \geq \max\{\bal_{l_k}, \bbe_{k-l_k}\}$ (for example, a monomial in the formal expansion of
$\bu^{\bde_{mk}}S_{mk}$, where $m \geq k+1$).

\smallskip
\noindent
Case (i) $\bg = 0$. Now, $d\bu^{\bbe} = \bu^{\bbe} \sum \be_i du_i/u_i$. If $\bbe$ is $\IQ$-linearly dependent
on $\bal_1,\ldots,\bal_{l_k}$, then $\Om_k \wedge d\bu^{\bbe} = 0$. If $\bbe$ is $\IQ$-linearly independent of
the $\bal_i$, then some coefficient of $\Om_k \wedge d\bu^{\bbe}$ (with respect to the log basis) is $u^{\bg_k + \bbe}$
times a unit, and all other coefficients are divisible by $u^{\bg_k + \bbe}$.

\smallskip\noindent
Case (ii) $|\bg| \geq 1$. Then
$$
d(\bu^{\bbe}\bv^{\bg}) = \bu^{\bbe} \bv^{\bg} \sum \be_i \frac{du_i}{u_i} + \bu^{\bbe} \sum \g_j \bv^{\bg - (j)} dv_j.
$$
If $|\bg| > 1$, then all coefficients of $\Om_k \wedge d(\bu^{\bbe}\bv^{\bg})$ are divisible by some $v_j$.
Consider $|\bg| = 1$. If $\bg = (j)$, for some $j \leq k-l_k$, then $\bu^{\bbe} \bv^{(j)} = u^{\bbe - \bbe_j}u^{\bbe_j}v_j$
and $\Om_k \wedge d(\bu^{\bbe} \bv^{(j)})$ is in the submodule of logarithmic $(k+1)$-forms divisible by $v_j$. If
$\bg = (q)$, for some $q > k-l_k$, then some coefficient of $\Om_k \wedge d(\bu^{\bbe} \bv^{(q)})$ is $\bu^{\bg_k + \bbe}$
times a unit, and all other coefficients are divisible by $\bu^{\bg_k + \bbe}$.

\smallskip
We can assume that the monomial $\bu^{\bg_{k+1}}$ generating $\cF_{n-(k+1)}$ is given (up to a unit) by a coefficient of
$$
d\s_1 \wedge \cdots \wedge d\s_k \wedge d\s_{k+1}.
$$
Then
$$
\bg_{k+1} = \bg_k + \bde_{k+1,k}\quad \text{and }\quad \bde_{mk} \geq \bde_{k+1,k},\,\, m \geq k+1.
$$
Moreover, by the computation above,
either there is a monomial $P = \bu^{\bal}$ appearing in the formal expansion of $\bu^{\bde_{k+1,k}}S_{k+1,k}$
such that $\bde_{k+1,k} = \bal$, or there is a monomial $P = \bu^{\bbe}v_q$, where $q> k - l_k$, appearing
in the formal expansion of $\bu^{\bde_{k+1,k}}S_{k+1,k}$
such that $\bde_{k+1,k} = \bbe$.

We can now obtain (1)$_{k+1}$. Set $S := S_{k+1,k}$. First suppose $P = \bu^{\bal}$. Then $S$ is a unit and
$\bal_1,\ldots,\bal_{l_k},\bal$ are linearly independent over $\IQ$. Let $\bep = (\ep_1,\ldots,\ep_s) \in \IQ^s$ 
denote a shortest vector such that 
\begin{align*}
\langle \bal_i, \bep\rangle &= 0,\quad i=1,\ldots,l_k,\\
\langle \bal,\bep\rangle &= 1,
\end{align*}
and consider the coordinate change
\begin{alignat*}{3}
\ou_h &= S^{\ep_h} u_h, &&h = 1,\ldots,s,\\
\ov_j &= S^{-\langle\bbe_j,\bep\rangle}v_j,\quad &&j = 1,\ldots,k-l_k,\\
\ov_j &= v_j, &&j > k-l_k.
\end{alignat*}
Then $\obu^{\bbe} = S^{\langle\bbe,\bep\rangle}\bu^{\bbe}$, for any $\bbe$, so
\begin{alignat*}{3}
\obu^{\bal_i} &= \bu^{\bal_i}, &&i=1,\ldots,l_k,\\
\obu^{\bbe_j}\ov_j &= \bu^{\bbe_j}v_j,\quad &&j=i,\ldots, k-l_k,\\
\obu^{\bal} &= S \bu^{\bal}. &&
\end{alignat*}
Set
\begin{align*}
\bal_{l_k + 1} &:= \bal = \bg_{k+1} - \bg_k,\\
l_{k+1} &:= l_k + 1.
\end{align*}
Then the $\cO_X$-module generated by $d\s_1,\ldots,d\s_{k+1}$ is also generated by
$$
d\obu^{\bal_i},\,\, i=1,\ldots,l_{k+1},\quad \text{and }\quad d(\obu^{\bbe_j}\ov_j),\,\, j=1,\ldots,k+1-l_{k+1}.
$$

Secondly, suppose $P = \bu^{\bbe}v_q$, where $q > k - l_k$. Then $S(0)=0$ and
$(\p S/\p v_q)(0) \neq 0$. We can assume that $q = k-l_k+1$. Consider
the coordinate change
\begin{alignat*}{3}
\obu &= \bu, &&\\
\ov_j &= v_j,\quad &&j\neq k-l_k+1,\\
\ov_{k-l_k+1} &= S. &&
\end{alignat*}
Set $l_{k+1} := l_k$. Then the $\cO_X$-module generated by $d\s_1,\ldots,d\s_{k+1}$ is also generated by
$$
d\obu^{\bal_i},\,\, i=1,\ldots,l_{k+1},\quad \text{and }\quad d(\obu^{\bbe_j}\ov_j),\,\, j=1,\ldots,k+1-l_{k+1}.
$$

Properties (2)$_{k+1}$ and (4)$_{k+1}$ are clear from the construction.

It remains to verify (3)$_{k+1}$. Clearly, $g_{mk}(\obu,\obv) = g_{mk}(\bu,\bv)$, for all $m \geq k+1$;
i.e., $\obu^{\bal_i} = \bu^{\bal_i}$ and $\obu^{\bbe_j}\ov_j = \bu^{\bbe_j}v_j$, for all monomials involved.
In the analytic case, for each $m\geq k+2$, we can define $g_{m,k+1}$ by adding to $g_{mk}(\obu,\obv)$ 
all monomials $\obu^{\bbe}\obv^{\bg}$ (times nonzero constants) appearing in $\s_m - g_{mk}$ such that 
$(\bbe,\bg)$ is a $\IQ$-linear combination
of $(\bal_i,0),\, (\bbe_j,(j))$, $i=1,\ldots,l_{k+1},\, j=1,\ldots,k+1-l_{k+1}$. Note that such $(\bbe,\bg)$ is $\geq$
all the $(\bal_i,0),\, (\bbe_j,(j))$.

In the algebraic case, the preceding construction provides formal expansions $\hat{g}_{m,k+1}$ and
$\bu^{\bde_{m,k+1}}\widehat{S}_{m,k+1}$ that are not \emph{a priori} algebraic. In this case, for each
$m\geq k+2$, we can define $g_{m,k+1} := \hat{g}_{m,k+1} - \hat{h}_{m,k+1}$ and 
$S_{m,k+1} := \widehat{S}_{m,k+1} + \hat{h}_{m,k+1}/\bu^{\bde_{m,k+1}}$, where $\hat{h}_{m,k+1}$ denotes
the sum of all terms $c_{\bbe \bg}\bu^{\bbe}\bv^{\bg}$ ($c_{\bbe \bg} \neq 0$) of $\hat{g}_{m,k+1}$ with
$(\bbe,\bg) > (\bde_{m,k+1},0)$. Then we still have (3)$_{k+1}$ (as well as (1)$_{k+1}$, (2)$_{k+1}$ and (4)$_{k+1}$).
Moreover, $g_{m,k+1}$ and $S_{m,k+1}$ are algebraic; we explain this in Remark \ref{rem:fitHP}(1) following
because the remark will be needed also in the proof of Lemma \ref{lem:rho}.

This completes the proof of Lemma \ref{lem:fitHP}.
\end{proof}

\begin{remarks}\label{rem:fitHP} 
(1) Given $\s_m = \hat{g}_{m,k+1} + \bu^{\bde_{m,k+1}}\widehat{S}_{m,k+1}$ as well as $g_{m,k+1}$ and 
$S_{m,k+1}$, as above, let $Q$ and $R$ 
denote the quotient and remainder of $\s_m$ (respectively) after division by the monomial $\bu^{\bde_{m,k+1}}$. 
(This means that, formally, $R$ is the sum of all terms of $\hat{g}_{m,k+1}$ which are not divisible by
$\bu^{\bde_{m,k+1}}$.) Then $Q$ and $R$ are algebraic. Moreover, $g_{m,k+1} = R$ and $S_{m,k+1} = Q$
unless $\hat{g}_{m,k+1}$ includes a term $c\bu^{\bde_{m,k+1}}$ (with nonzero coefficient $c$). In the latter
case, $\bde_{m,k+1}$ is linearly dependent on $\bal_1,\ldots,\bal_{l_k}$, and $g_{m,k+1} = R + c\bu^{\bde_{m,k+1}}$,
$S_{m,k+1} = R - c$.

\smallskip
(2) Theorem \ref{thm:fitHP} says that, under the assumption that all log Fitting ideals $\cF_k(\s)$ are principal,
we get Hsiang-Pati coordinates at every point of $X$, as in Conjecture \ref{conj:main}; according to the
proof of Theorem \ref{thm:fitHP}, it may happen that
the first differential monomial according to the order in condition (3) of the latter is $d(\bu^{\bbe_1}v_1)$.
In practice, we will principalize $\cF_0$ using Theorem \ref{thm:fit0} and then try to principalize $\cF_{n-1}, \cF_{n-2},\ldots$
inductively. The resulting ordered list of differential monomials will always begin with either $\bu^{\bal_1}$ or $v_1$
(see Theorem \ref{thm:start} below). 

One can add to Conjecture \ref{conj:main} the condition that each $\bbe_j$ be linearly dependent on all
preceding $\bal_i$ in the ordered list. See also \cite{Y}. Given Hsiang-Pati coordinates as in Conjecture
\ref{conj:main}, one can obtain the additional condition at least locally, by further admissible
blowings-up over $E$ if necessary. We do not know of a situation where the stronger condition is needed, but
it may simplify proofs.

\smallskip
(3) Lemma \ref{lem:fitHP} includes, in particular, the statement of Theorem \ref{thm:fitHP}
locally at a point $a \in X$. Since Lemma \ref{lem:fitHP}(1) is an open condition, the lemma
implies that Hsiang-Pati coordinates at a point of $X$ induce Hsiang-Pati coordinates at
nearby points.
\end{remarks}

\subsection{Logarithmic rank and principalization of Fitting ideals of low-order minors}\label{sec:start}
In this section, we assume that $X_0$ is a complex-analytic variety or an algebraic variety over an
algebraically closed field of characteristic zero. The results also hold, however, for a real-analytic variety
or an algebraic variety over an arbitrary field of characteristic zero (see Remark \ref{rem:real}).
Let $\s: (X,E) \to (X_0, Y_0 :=\Sing X_0)$ be a resolution of singularities of $X_0$. 
In particular, $\s^*\cI_{Y_0}$ is a principal $E$-monomial ideal. Set
$$
p := \max_E \log\rk \s = \dim Y_0,
$$
and, for each $k=0,\ldots, p$, set
\begin{align*}
\Sigma_k &:= \{a \in E: \log\rk_a \s \leq p-k\},\\
Y_k &:= \s(\Sigma_k).
\end{align*}
Since $\s$ is proper, each $Y_k$ is a closed subvariety of $X_0$, and 
$$
\Sing X_0 = Y_0 \supset Y_1 \supset \cdots \supset Y_p.
$$
Define
$$
\cI_{Y_k} := \text{ideal of $Y_k$ in $\cO_{X_0}$},\quad k=0,\ldots,p.
$$

\begin{lemma}\label{lem:start}
After further admissible blowings-up over $Y_0$ if necessary, we can assume that, for each $k$, $\Sigma_k = \s^{-1}(Y_k)$,
$Y_k\backslash Y_{k+1}$ is smooth, and $\s^*\cI_{Y_k}$ is 
a principal monomial ideal.
\end{lemma}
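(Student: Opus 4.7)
Proof Proposal.

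The plan is a downward induction on $k$, starting at $k = p$ and working back to $k = 0$, carrying out at each level three kinds of admissible blowings-up: a resolution of singularities of $Y_k \setminus Y_{k+1}$, a principalization of the pull-back ideal $\s^*\cI_{Y_k}$, and a final clean-up that forces $\s^{-1}(Y_k) = \Sigma_k$. Two preliminary observations frame the argument. First, $\Sigma_k$ is closed in $X$ because $\log\rk \s$ is lower semicontinuous, hence $Y_k = \s(\Sigma_k)$ is closed in $X_0$ by properness of $\s$, and the inclusion $\Sigma_k \subseteq \s^{-1}(Y_k)$ is tautological. Second, by the formula $\log\Jac(\s\circ\be) = ((\log\Jac\s)\circ\be)\cdot B$ of \S\ref{subset:blup} together with Remark \ref{rem:fit}, any admissible blowing-up $\be : X'\to X$ satisfies $\log\rk_{a'}(\s\circ\be) \leq \log\rk_{\be(a')} \s$, with equality when $\be$ is combinatorial, and with the matrix $B$ dropping rank along the new exceptional divisor when $\be$ is not combinatorial.

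For the inductive step at level $k$ (assuming the conclusion at levels $>k$), first apply Hironaka's embedded resolution to desingularize $Y_k \setminus Y_{k+1}$ inside $X_0$, transporting the sequence of smooth centres (contained in $Y_k \setminus Y_{k+1}$, hence in $\s(\Sigma_k \setminus \Sigma_{k+1})$) to corresponding admissible blowings-up of $X$ with centres in their preimages under $\s$, after auxiliary combinatorial blowings-up to achieve smoothness and normal crossings with $E$. Next, apply principalization of ideals to $\s^*\cI_{Y_k}$; the prescribed centres lie inside $\s^{-1}(Y_k) \subseteq E$ and, by Lemma \ref{lem:fit}(2) combined with the inductive hypothesis and the second preliminary observation, do not disturb the monomial principalizations already established at the higher levels. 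After these two sub-steps, $Y_k \setminus Y_{k+1}$ is smooth, $\s^*\cI_{Y_k}$ is a principal $E$-monomial ideal, and consequently $\s^{-1}(Y_k)$ is a union of intersections of components of $E$.

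The main obstacle, and most delicate point of the proof, is the final clean-up, which forces $\s^{-1}(Y_k) = \Sigma_k$. Let $Z_k := \overline{\s^{-1}(Y_k) \setminus \Sigma_k}$, the closed locus where $\s$ maps into $Y_k$ but has log rank strictly larger than $p-k$. After auxiliary combinatorial blowings-up to arrange that $Z_k$ is smooth and has only normal crossings with $E$, I blow up $Z_k$. This is a non-combinatorial admissible blowing-up (since $Z_k$ is not a union of components of $E$ along its offending points), and by the second preliminary observation $B$ drops rank on the new exceptional divisor, which I expect to strictly decrease $\log\rk \s$ at the points of that exceptional divisor lying over a generic point of each component of $Z_k$. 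The process should terminate by a Noetherian induction on the lexicographic invariant $(\dim Z_k, \max_{Z_k} \log\rk \s)$. Verifying in detail that this strict decrease actually occurs, and that the clean-up blowings-up preserve the monomial principalizations at all levels $\geq k$ (by a Lemma \ref{lem:fit}(2)-type argument, since the centres are contained in the log-monomial locus), will be the most technical part of the proof.
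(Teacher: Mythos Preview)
Your proposal has two genuine gaps, and the ``most delicate point'' you identify is in fact the easiest once you see the right observation.

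First, your step 3 (the clean-up) is unnecessary. Once $\s^*\cI_{Y_k}$ is a principal $E$-monomial ideal, $\s^{-1}(Y_k) = \supp \s^*\cI_{Y_k}$ is a union of components of $E$. If $E_j$ is such a component, then $\s(E_j)\subseteq Y_k$, and since $\dim Y_k \leq p-k$ (because $Y_k = \s(\Sigma_k)$ and $\log\rk\,\s \leq p-k$ on $\Sigma_k$), the differential of $\s|_{E_j}$ has rank $\leq p-k$ everywhere. Hence $\log\rk_a \s = \rk_a(\s|_{E(a)}) \leq p-k$ for every $a\in E_j$, i.e.\ $\s^{-1}(Y_k)\subseteq \Sigma_k$. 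The reverse inclusion is tautological, so $\Sigma_k = \s^{-1}(Y_k)$ comes for free after principalization. Your proposed procedure of blowing up $Z_k$ to force the log rank down is not needed, and the termination argument you sketch is both vague and aimed at a non-issue.

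Second, your step 1 cannot work as stated: blowings-up of $X$ do not modify $X_0$, so ``transporting'' a desingularization of $Y_k\setminus Y_{k+1}$ to blowings-up of $X$ does nothing to make $Y_k\setminus Y_{k+1}$ smooth. The paper handles smoothness by a different device: it proceeds by \emph{upward} induction on $k$ and at each step \emph{enlarges} the candidate for $Y_k$ by setting $Y_k' := \s(\Sigma_k)\cup \Sing Y_{k-1}$. This still has $\dim Y_k'\leq p-k$, so one simply principalizes $\s^*\cI_{Y_k'}$ by admissible blowings-up over $Y_k'$. The dimension argument above then gives $\Sigma_k=\s^{-1}(Y_k')$, hence the new $Y_k$ equals $Y_k'$, and $Y_{k-1}\setminus Y_k$ is automatically smooth because $\Sing Y_{k-1}\subseteq Y_k'$. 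Since the centres lie over $Y_k'\subseteq Y_{k-1}$, the previously established properties at levels $<k$ are preserved. This one move replaces all three of your sub-steps and avoids the difficulties you anticipate.
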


\begin{remarks}\label{rem:start}
(1) It follows from the lemma that, for each $k$, $\log \rk \s = p-k$ on $\Sigma_k\backslash \Sigma_{k+1}$.

\smallskip
(2) The condition that $\s^*\cI_{Y_k}$ be a principal monomial ideal is not stable under admissible blowing-up.
\end{remarks}

\begin{proof}[Proof of Lemma \ref{lem:start}] $Y_0 = \Sing X_0$, so that $\Sigma_0 = \supp E = \s^{-1}(Y_0)$
and $\s^*\cI_{Y_0}$ is a principal monomial ideal with support $\Sigma_0$.

Of course, $\Sig_1 \subset \Sig_0$. Set
$$
Y'_1 := \s(\Sig_1) \cup \Sing Y_0.
$$ 
Then $\dim Y'_1 \leq p-1$. By resolution of singularities, 
after further admissible blowings-up with centres over $Y'_1$ (i.e., in the
inverse image of $Y'_1$), we can assume that $\s^*\cI_{Y'_1}$ is a principal monomial ideal. Then
$\Sig_1 = \supp \s^*\cI_{Y'_1} = \s^{-1}(Y'_1)$, $Y_1 = Y'_1$ and $Y_0\backslash Y_1$ is smooth.

Again, $\Sig_2 \subset \Sig_1$. Let 
$$
Y'_2 := \s(\Sig_2) \cup \Sing Y_1.
$$
Then $\dim Y'_2 \leq p-2$. After further admissible blowings-up with centres over $Y'_2$, we can 
assume that $\s^*\cI_{Y'_2}$ is a principal monomial ideal. Then
$\Sig_2 = \supp \s^*\cI_{Y'_2} = \s^{-1}(Y'_2)$, $Y_2 = Y'_2$ and $Y_1\backslash Y_2$ is smooth. Moreover,
it is still true that $\s^*\cI_{Y_1}$ is a principal monomial ideal,
$\Sig_1 = \supp \s^*\cI_{Y_1} = \s^{-1}(Y_1)$, and $Y_0\backslash Y_1$ is smooth. 

We can continue in the same way to prove the lemma.
\end{proof}

\begin{remark}\label{rem:real}
If $X_0$ is a real-analytic variety, or an algebraic variety over a field that is not algebraically closed,
then the $\s(\Sig_k)$ need not be closed varieties. The proof of Lemma \ref{lem:start} goes through,
however, provided that each $\s(\Sig_k)$ lies in a closed variety of dimension 
$= d_k :=\max \{\log\rk_a \s: a \in \Sig_k\}$; 
in this case,
we can simply replace each $Y'_k$ in the proof by the smallest closed subvariety of $Y_{k-1}$
containing $\s(\Sig_k) \cup \Sing Y_{k-1}$. The preceding condition holds in the algebraic case, in general
(cf. \cite{Rond}). It holds in the real-analytic case because $X_0$ has a complexification $X_0^\IC$
\cite{Tognoli}, and $\s$ is induced by a resolution of singularities $\s^\IC: (X^\IC, E^\IC) \to (X_0^\IC, \Sing X_0^\IC)$
of $X_0^\IC$ (in particular, $\s^\IC$ is proper). Lemma \ref{lem:start} applies to $\s^\IC$. In the proof of 
Lemma \ref{lem:start} in the real case, we can take $Y'_k$ to be the real part (i.e., the invariance
space with respect to the canonical autoconjugation) of a union of components of $(Y_k^\IC)'$; cf. \cite[Sect.\,2]{Hiro}.
\end{remark}

\begin{theorem}\label{thm:start}
Let $\s: (X,E) \to (X_0, Y_0 :=\Sing X_0)$ denote a resolution of singularities of $X_0$ satisfying the
conclusion of Lemma \ref{lem:start}. (We use the notation
at the beginning of the subsection.) Let $a \in E$, and let $r := \log \rk_a \s$. Then, for any
local embedding $X_0 \hookrightarrow M_0$ (at $a$) in a smooth variety $M_0$, we can choose 
coordinates $(\bu,\bv) = (u_1,\ldots,u_s,v_1,\ldots,v_{n-s})$ adapted to $E$ for $X$ at $a=0$, and
coordinates $z=(z_1,\ldots,z_N)$ for $M_0$ at $\s(a)$, with respect to which, if $\s = (\s_1,\ldots,\s_N)$,
then
$$
\s_1 = v_1,\,\, \ldots,\,\, \s_r = v_r,\,\, \s_{r+1} = \bu^{\bal_1},
$$ 
where $\bal_1 \in \IN^s$, $\cI_{Y_{p-r}}$ is generated by $z_{r+1},\ldots,z_N$ at $\s(a)$,
and $\s^*\cI_{Y_{p-r}}$ is generated by $\s_{r+1} = \s^*(z_{r+1})$ at $a$.
\end{theorem}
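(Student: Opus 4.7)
The plan is to read off the required coordinate form directly from Lemma \ref{lem:start}, combining the smoothness of the strata $Y_k\setminus Y_{k+1}$ with the principal monomial structure of $\s^*\cI_{Y_{p-r}}$ and the log rank hypothesis. The idea is that the monomial form of $\s^*\cI_{Y_{p-r}}$ immediately produces the component $\s_{r+1}=\bu^{\bal_1}$, while $\log\rk_a\s=r$ forces the first $r$ components to cut out a coordinate system transverse to $E(a)$.

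First, from $\log\rk_a\s=r$ and Lemma \ref{lem:start} I get $a\in\Sig_{p-r}\setminus\Sig_{p-r+1}$ and hence $\s(a)\in Y_{p-r}\setminus Y_{p-r+1}$. The latter is smooth of dimension $r$ near $\s(a)$: the bound $\dim Y_{p-r}\leq r$ is recorded in the proof of Lemma \ref{lem:start}, and the matching lower bound follows because $\log\rk_a\s=r$ means $\s|_{E(a)}$ has rank $r$ at $a$, so its image (contained in $Y_{p-r}$) has local dimension $r$. Since $Y_{p-r}$ is a smooth $r$-dimensional subvariety of $M_0$ near $\s(a)$, I choose local coordinates $z=(z_1,\ldots,z_N)$ of $M_0$ centred at $\s(a)$ for which the ideal of $Y_{p-r}$ in $\cO_{M_0,\s(a)}$ is $(z_{r+1},\ldots,z_N)$; passing to $\cO_{X_0}$ gives $\cI_{Y_{p-r},\s(a)}=(z_{r+1},\ldots,z_N)$ and hence $\s^*\cI_{Y_{p-r},a}=(\s_{r+1},\ldots,\s_N)$.

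Second, by Lemma \ref{lem:start} this ideal is a principal $E$-monomial ideal $(\bu^{\bal_1})$, so some generator $\s_j$ with $j>r$ is a unit times $\bu^{\bal_1}$. After permuting $z_{r+1},\ldots,z_N$ I may assume this is $\s_{r+1}=h\cdot\bu^{\bal_1}$ with $h$ a unit. To absorb $h$, I pick any component index $i$ with $\al_{1i}>0$ (such $i$ exists since $\bu^{\bal_1}$ vanishes at $a\in\supp E$) and perform the coordinate change $u_i\mapsto u_i\cdot h^{1/\al_{1i}}$ on $X$, leaving the other $u_j$ and all $v_j$ fixed. The $\al_{1i}$-th root of the unit $h$ exists in the complex-analytic or \'etale setting by the standing hypothesis of the subsection (cf.\ Remark \ref{rem:real} for the real-analytic or non-closed-field case). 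After this change, still labelling the coordinates $(\bu,\bv)$, we have $\supp E=(u_1\cdots u_s=0)$ and $\s_{r+1}=\bu^{\bal_1}$.

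Third, I straighten $\s_1,\ldots,\s_r$ into $v$-coordinates. Because the $u$-columns of $\log\Jac\s$ vanish at $a=0$, the hypothesis $\log\rk_a\s=r$ means the block $(\p\s_k/\p v_j)_{k,j}$ has rank $r$ at $a$. After permuting $z_1,\ldots,z_r$ I may assume that $d(\s_1|_{E(a)}),\ldots,d(\s_r|_{E(a)})$ are linearly independent at $a$, and the inverse function theorem then allows me to replace $r$ of the $v_j$ (after reindexing, the variables $v_1,\ldots,v_r$) by $\s_1,\ldots,\s_r$, leaving $v_{r+1},\ldots,v_{n-s}$ and the entire $\bu$ untouched. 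The resulting coordinates $(\bu,\bv)$ are again adapted to $E$, and $\s_{r+1}=\bu^{\bal_1}$ is preserved because only $v$-variables are renamed. The concluding identity $\s^*\cI_{Y_{p-r}}=(\s_{r+1})$ then follows from the choice of $z$ in the first step. The main obstacle I anticipate is not technical depth but careful bookkeeping: the three coordinate changes (target permutation, unit absorption in the $u$'s, $v$-straightening from $\s_1,\ldots,\s_r$) must be carried out in an order that keeps each earlier normalization intact, and one must verify that the two permutations of the $z_j$ do not disturb either the generating set $(z_{r+1},\ldots,z_N)$ of $\cI_{Y_{p-r}}$ or the monomiality $\s_{r+1}=\bu^{\bal_1}$.
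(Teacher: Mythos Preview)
Your proof is correct and follows essentially the same approach as the paper's, using the smoothness of $Y_{p-r}$ from Lemma~\ref{lem:start}, the principal monomial property of $\s^*\cI_{Y_{p-r}}$, and the implicit function theorem. The only difference is the order of the steps: the paper first straightens $\s_1,\ldots,\s_r$ and then performs a nonlinear target coordinate change $\bar z_j := z_j - \s_j(z_1,\ldots,z_r)$ (for $j>r$) to make $z_{r+1},\ldots,z_N$ cut out $Y_{p-r}$, whereas you choose target coordinates adapted to $Y_{p-r}$ from the outset and then deduce that $\s_1,\ldots,\s_r$ automatically have full rank at $a$ because the remaining components lie in the monomial ideal $(\bu^{\bal_1})$.
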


\begin{proof}
Let $E_i$, $i=1,\ldots,s$, denote the components of $E$ at $a$. We call $\cap E_i$ the 
\emph{stratum} $E(a)$ of $a$ (cf. \ref{subsec:logdiff}). We can assume that
$$
\supp  \s^*\cI_{Y_{p-r}} = \bigcup_{i=1}^t E_i,
$$
at $a$, where $t \leq s$. Then $\log \rk_b \s = r$, for all $b \in \cup_{i=1}^t E_i$ near $a$.

Let $z=(z_1,\ldots,z_N)$ denote coordinates for $M_0$ at $\s(a)$. It follows from the
implicit function theorem that, after permuting the $z_j$ if necessary,
we can choose coordinates $(\bu,\bv) = (u_1,\ldots,u_s,v_1,\ldots,v_{n-s})$ 
for $X$ adapted to $E$ at $a=0$, such that $(v_1,\ldots,v_r)$ forms part of a system of coordinates for 
$E(a)$ at $a$, and
\begin{enumerate}
\item $\s_1 = v_1,\, \ldots,\, \s_r = v_r$ at $a$;
\smallskip
\item for each $j > r$, $\s_j = \s_j(v_1,\ldots,v_r)$ on $E_i$ at $a$, $i=1,\ldots,t$.
\end{enumerate}

Since $Y_{p-r}$ is smooth at $\s(a)$, then $z_j - \s_j(z_1,\ldots,z_r)$, $j > r$, generate the ideal of 
$Y_{p-r}$ at $a$. After a coordinate change
$$
\oz_j := z_j - \s_j(z_1,\ldots,z_r),\quad j> r,
$$
we can therefore assume that $z_{r+1},\ldots,z_N$ generate $\cI_{Y_{p-r}}$ at $\s(a)$ (so that 
 $\s_{r+1},\ldots, \s_N$ generate $\s^*\cI_{Y_{p-r}}$ at $a$). Since the
latter is a principal monomial ideal, we can also assume that $\s_{r+1} = \bu^{\bal_1}$, as required.
\end{proof}

The following is an immediate consequence of 
Theorems \ref{thm:fit0}, \ref{thm:fitHP} and \ref{thm:start}.

\begin{corollary}\label{cor:HP2}
Conjecture \ref{conj:main} holds in the case that $\dim X_0 \leq 2$.
\end{corollary}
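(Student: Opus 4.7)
The plan is to verify condition (1) of Theorem \ref{thm:fitHP}---principalization of every logarithmic Fitting ideal $\cF_0(\s),\ldots,\cF_{n-1}(\s)$---for a suitably refined resolution $\s$ when $n := \dim X_0 \leq 2$, and then invoke Theorem \ref{thm:fitHP}. The key point is that in low dimension, Theorem \ref{thm:start} already principalizes enough Fitting ideals at each point to exhaust the list.

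Starting from any resolution $\s: (X,E) \to (X_0,\Sing X_0)$ by admissible blowings-up, I would first apply Theorem \ref{thm:fit0} to arrange that $\cF_0(\s)$ is a principal monomial ideal. Since this property is stable under further admissible blowings-up (the second clause of Theorem \ref{thm:fit0}), I may then compose with the further admissible blowings-up produced by Lemma \ref{lem:start} to obtain the stratification $\Sigma_k = \s^{-1}(Y_k)$ with each $\s^*\cI_{Y_k}$ principal monomial, while retaining $\cF_0(\s)$ principal monomial. This puts the hypotheses of Theorem \ref{thm:start} in place.

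Fix $a \in X$; I may assume $a \in E$, since otherwise $\s$ is a local isomorphism and the conclusion is trivial. Set $r := \log\rk_a \s$. Theorem \ref{thm:start} supplies adapted coordinates $(\bu,\bv)$ at $a$ and coordinates of a local embedding at $\s(a)$ in which $\s_1 = v_1,\ldots,\s_r = v_r,\ \s_{r+1} = \bu^{\bal_1}$, and $\s^*\cI_{Y_{p-r}} = (\bu^{\bal_1})$; in particular $\s_m \in (\bu^{\bal_1})$ for every $m > r+1$. This exhibits the partial Hsiang--Pati data of Lemma \ref{lem:fitHP}(2) at level $k = r+1$ (take $l_{r+1} = 1$, $\bbe_1 = \cdots = \bbe_r = 0$, and $g_m = 0$, $S_m = \s_m$ for $m > r+1$; the $\IQ$-linear independence and total-ordering requirements on $\{\bal_1,\bbe_1,\ldots,\bbe_r\}$ are automatic). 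The implication (2)$\Rightarrow$(1) of Lemma \ref{lem:fitHP} then delivers principalization of $\cF_{n-1}(\s), \ldots, \cF_{n-r-1}(\s)$ at $a$.

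For $n \leq 2$ and any admissible value $r$, the set $\{n-1, \ldots, n-r-1\} \cup \{0\}$ exhausts $\{0, 1, \ldots, n-1\}$, so all $\cF_k(\s)$ are principal monomial at every point of $X$. Theorem \ref{thm:fitHP} then produces Hsiang--Pati coordinates globally, and Conjecture \ref{conj:main} follows. There is no real obstacle here; the corollary is essentially a bookkeeping consequence of the three cited theorems, with the genuine difficulty postponed to dimension $n \geq 3$, where the gap between the Fitting ideals principalized by Theorem \ref{thm:start} and the full list must be closed by the delicate inductive argument on $\rho$ outlined in the introduction.
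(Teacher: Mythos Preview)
Your proof is correct and follows essentially the same approach as the paper, which simply states that the corollary is an immediate consequence of Theorems~\ref{thm:fit0}, \ref{thm:fitHP} and \ref{thm:start}. You have faithfully unpacked the implicit argument: the paper already remarks in \S\ref{subsec:fitHP} that Theorem~\ref{thm:start} principalizes $\cF_{n-1}(\s),\ldots,\cF_{n-(r+1)}(\s)$ at every point of log rank $r$, and your use of Lemma~\ref{lem:fitHP}(2)$\Rightarrow$(1) to make this explicit, together with the observation that for $n\leq 2$ these indices plus $0$ exhaust $\{0,\ldots,n-1\}$, is exactly the intended reasoning.
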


\section{Invariant of a logarithmic Fitting ideal}\label{sec:inv}
We use the notation of Section \ref{sec:fitHP}.
Let $\s: (X,E)\to (X_0,\Sing X_0)$ denote a resolution of singularities of $X_0$, $n = \dim X_0$.

\begin{definition}\label{def:inv} Given $k=0,\ldots,n-1$ and
$a \in X$, let $\cR_{k,a}$ denote the \emph{residual ideal} of $\cF_k(\s)_a$ in $\cO_{X,a}$; i.e.,
$\cF_k(\s)_a = \prod_q \cI_{E_q,a}^{\mu_q} \cdot \cR_{k,a}$, with the $\mu_q \in \IN$
as large as possible, where $\{E_q\}$ denotes the set of components of $E$ and
$\cI_{E_q}$ is the ideal sheaf of $E_q$. Let $\rho_k(a)$ denote the order of $\cR_{k,a}$ in the
local ring $\cO_{X,a}/\sum_{a\in E_q} \cI_{E_q,a}$ (cf. \cite[Section\,5]{BMfunct}, \cite[Section\,2]{Cut2}; 
$\rho_k(a) := \infty$ if and only if $\cR_{k,a} = 0$ in the preceding local ring).
\end{definition}

The following lemma lists several properties of the basic invariant $\rho_k(a)$ that are all either clear or
easy to prove.

\begin{lemma}\label{lem:invprops}\begin{enumerate}\item $0 \leq \rho_k(a) \leq \infty$.
\item $\rho_k(a) = 0$ if and only if $\cF_k(\s)_a$ is a principal monomial ideal.
\item $\rho_k$ is upper-semicontinuous in the Zariski topology of $X$.
\item If $a$ is an $n$-point, then $\rho_k(a) =0$ or $\infty$.
\item If $\be: (X',E') \to (X,E)$ is a combinatorial blowing-up and $a' \in \s^{-1}(a)$, then $\rho_k(a') \leq \rho_k(a)$.
\end{enumerate}
\end{lemma}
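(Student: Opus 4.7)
The strategy is to dispatch claims (1), (2), (4) immediately from the definition of $\rho_k$, to handle (3) by identifying the residual as a genuine coherent ideal sheaf in a neighbourhood of $a$, and to prove (5) by a direct computation in a blow-up chart, using Lemma \ref{lem:fit}(2). Claim (1) is by definition, since the order of a nonzero ideal in a regular local ring is a nonnegative integer or $\infty$. For (2), $\rho_k(a) = 0$ forces $\cR_{k,a}$ to map to the unit ideal in $\cO_{X,a}/\sum_{a\in E_q}\cI_{E_q,a}$; Nakayama's lemma then gives $\cR_{k,a} = \cO_{X,a}$, so $\cF_k(\s)_a = \prod_q \cI_{E_q,a}^{\mu_q}$ is principal monomial, and the converse is clear from the maximality of the $\mu_q$. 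Claim (4) is also immediate: at an $n$-point that quotient is the residue field, in which every proper ideal is $(0)$, so $\rho_k(a) \in \{0,\infty\}$.

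For (3), my first step is to observe that on a sufficiently small adapted neighbourhood $U$ of $a$, the residual is encoded by a single coherent ideal sheaf: write $\cF_k(\s) = u_1^{\nu_1}\cdots u_s^{\nu_s}\cdot\cR'$ in $U$, where $\nu_i$ is the generic order of $\cF_k(\s)$ along $E_i\cap U$ and $\cR'$ is divisible by no $u_i$. At any $b \in U$, the components of $E$ not passing through $b$ contribute units, so $\cR_{k,b} = \cR'_b$ up to a unit, and $\rho_k(b)$ is the order at $b$ of the restriction of $\cR'$ to the stratum through $b$. Stratifying $U$ by the closed subschemes $Z_T := \bigcap_{i\in T} E_i$, upper-semicontinuity on each open stratum is the standard semicontinuity of the order of a fixed ideal sheaf. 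The cross-stratum step is: if $b'$ lies in the open stratum of $Z_{T'}$ and approaches $b$ in the open stratum of $Z_T$ with $T' \subsetneq T$, then $Z_T \subset Z_{T'}$, restriction from $Z_{T'}$ to $Z_T$ only raises order, and
\[
\rho_k(b') = \ord_{b'}(\cR'|_{Z_{T'}}) \leq \ord_b(\cR'|_{Z_{T'}}) \leq \ord_b(\cR'|_{Z_T}) = \rho_k(b).
\]
This gives upper-semicontinuity in the Zariski topology, since the closure of the level set of any stratum piece lands in the level sets of the strata in its closure.

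Claim (5) is the part I expect to require the most care. By Lemma \ref{lem:fit}(2), $\cF_k(\s\circ\be) = \be^*\cF_k(\s)$ for a combinatorial $\be$. I would work in the $u_1$-chart of the blowing-up with centre $(u_1 = \cdots = u_k = 0)$, first taking $a'$ to be the origin: the pulled-back monomial part becomes $u_1'^{\mu_1+\cdots+\mu_k} u_2'^{\mu_2}\cdots u_s'^{\mu_s}$ and the pulled-back residual is $\cR_{k,a}\circ\be$, possibly divisible by additional $u_i'$-factors. The key observation is that substituting $u_1' = \cdots = u_s' = 0$ into the pullback of any generator of $\cR_{k,a}$ recovers exactly the reduction of the original generator modulo $(u_1,\ldots,u_s)$, so the order of $(\cR_{k,a}\circ\be)$ modulo $(u_1',\ldots,u_s')$ equals $\rho_k(a)$; extracting any further $u_i'$-powers can only decrease that order, yielding $\rho_k(a') \leq \rho_k(a)$. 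Points of $\be^{-1}(a)$ lying in strata of $E'$ of smaller codimension reduce to this together with the cross-stratum comparison already established in (3). The main technical risk is bookkeeping the interaction between the maximal extraction of $u_i'$-powers and the substitution $\bu = 0$; but once one checks that the latter does not kill the residual whenever $\rho_k(a) < \infty$, the inequality is forced.
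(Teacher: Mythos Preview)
The paper does not prove this lemma; it states only that the listed properties ``are all either clear or easy to prove.'' Your proposal supplies the details the paper omits, and the arguments for (1)--(4) are correct. (In (2), what you call Nakayama is simply the observation that an element of $\cO_{X,a}$ whose image in $\cO_{X,a}/\sum_{a\in E_q}\cI_{E_q,a}$ is a unit is itself a unit, since the stratum ideal lies in the maximal ideal; and in (3) your claim that $\cR_{k,b}=\cR'_b$ is justified because the $u_i$-order of each generator of $\cF_k(\s)$ is constant along $E_i$.)

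One small point in (5): your appeal to the cross-stratum comparison from (3) to handle points $a'\in\be^{-1}(a)$ other than the chart origin is unnecessary and, as phrased, does not quite do the job---upper-semicontinuity alone does not compare $\rho_k$ at two fixed closed points. But the direct computation you gave at the origin works uniformly at every $a'\in\be^{-1}(a)$: in the $u_1$-chart, setting to zero the $E'$-coordinates through $a'$ always forces $u_1'=0$ and $u_{k+1}'=\cdots=u_s'=0$, hence all of $u_1,\ldots,u_s$ vanish, so the restriction of $\be^*\cR_{k,a}$ to the stratum of $a'$ is again generated by the $g(0,\ldots,0,\bv)$ and has order exactly $\rho_k(a)$ at $a'$. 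When $\rho_k(a)<\infty$ this rules out any further monomial extraction and in fact gives $\rho_k(a')=\rho_k(a)$; when $\rho_k(a)=\infty$ the inequality is trivial.
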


We will only need $\rho_k$ in the case that $k=n-2$ in this article; i.e., for the log Fitting ideal of $2\times 2$ minors.
Write $\rho := \rho_{n-2}$. Lemma \ref{lem:rho} below extends in a straightforward way to $\rho_k(a)$, for any $k$, with
the assumption that the Fitting ideals $\cF_{n-1}(\s)_a, \cF_{n-2}(\s)_a,\ldots,\cF_{k+1}(\s)_a$ are all principal
monomial ideals. We present the lemma only in the case needed for the remainder of the paper, in part so
that we can fix notation that will be used in the following sections.

\begin{lemma}[Weierstrass form]\label{lem:rho}
Let $a \in \supp E$ be an $s$-point ($1 \leq s \leq n$). Suppose that $\cF_{n-1}(\s)_a$ is a principal monomial
ideal and that $\log\rk_a \s = 0$. Then:
\begin{enumerate}
\item Let $M_0$ denote a local embedding variety for $X_0$ at $\s(a)$. Then there are adapted local coordinates
$(\bu,\bv) = (u_1,\ldots,u_s,v_1,\ldots v_{n-s})$ for $X$ at $a$ (where the $(u_k=0)$ are the components of $E$ at $a$),
and local coordinates $\bz=(z_1,\ldots,z_N)$ for $M_0$ at $\s(a)$ with respect to which the components $\s_i$ of $\s$
can be written
\begin{equation}\label{eq:prelim}
\begin{aligned}
\s_1 &=  \bu^{\bal}, \quad \bal \in \IN^s,\\
\s_i &= g_i(\bu) + \bu^{\bde} T_i, \quad i=2,\ldots,N,
\end{aligned}
\end{equation}
where $\bu^{\bal}$ divides all $\s_i$, each $g_i$ and $T_i$ is analytic (or regular), each $dg_i$ is in the
submodule generated by $d\bu^{\bal}$, the $T_i$ are not simultaneously divisible by any $u_k$, and
$\bde$ is linearly independent of $\bal$ if some $T_i$ is a unit.

\medskip
\item Given coordinates as above in which \eqref{eq:prelim} satisfied, let 
$$
d = d(a) := \min\{|\bg|: \bg \in \IN^{n-s} \text{ and } \p_{\bv}^{\bg} T_i \text{ is a unit, for some } i\}.
$$
Then\begin{enumerate}\item $\rho(a) < \infty$ if and only if $d(a) < \infty$;
\item $\rho(a) = 0$ if and only if $d(a) = 0$ or $1$;
\item if $0 < \rho(a) < \infty$, then $d(a) = \rho(a) +1$.
\end{enumerate}

\medskip
\item Suppose that $0 < \rho(a) < \infty$. Then there are adapted coordinates $(\bu,v,\bw) = 
(u_1,\ldots,u_s,v,w_1,\ldots w_{n-s-1})$ for $X$ at $a$ and coordinates $\bz=(z_1,\ldots,z_N)$ for $M_0$ 
at $\s(a)$ with respect to which the components $\s_i$ of $\s$ can be written as in \eqref{eq:prelim} with
\begin{equation}\label{eq:weier}
T_i(\bu,v,\bw) = \tT_i(\bu,v,\bw)v^d  + \sum^{d-1}_{j=0} a_{ij}(\bu,\bw) v^j,\quad i=2,\ldots,N,
\end{equation}
where $\tT_2$ is a unit that we will also denote $U$, $a_{2,d-1} = 0$, 
and all monomials of order $\leq d + |\bde|$ in the formal expansions of the $\bu^{\bde} T_i$ at $a$,
$i=2,\ldots,N$, are ``linearly independent'' of $\bu^{\bal}$ (i.e., have exponents with respect to $(\bu,v,\bw)$ that
are linearly independent of $(\bal,0,\bzero)$.
\end{enumerate}
\end{lemma}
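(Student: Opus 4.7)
My approach is to derive (1) as an immediate consequence of Lemma \ref{lem:fitHP}, to establish (2) by direct computation of the $2\times 2$ minors of the logarithmic Jacobian and analysis of the residual ideal modulo the ideals of the local $E$-components, and to obtain (3) via Weierstrass preparation in a distinguished variable together with a Tschirnhaus translation.

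For (1), I apply Lemma \ref{lem:fitHP} with $k=1$. Principality of $\cF_{n-1}(\s)_a$ produces adapted coordinates in which $\cM_1 = \langle d\s_1\rangle$ is generated by either $d(\bu^{\bal_1})$ or $d(\bu^{\bbe_1}v_1)$; the hypothesis $\log\rk_a\s = 0$ forces $\p\s_m/\p v_j(a)=0$ for all $m,j$, ruling out the second possibility, so $\s_1 = \bu^{\bal}$ (setting $\bal:=\bal_1$, any leading unit being absorbed by a monomial coordinate change as in the proof of that lemma). Conclusion (2)(b) there gives $\s_m = g_m + \bu^\bal T'_m$ for $m\geq 2$, with $dg_m\in\langle d\bu^\bal\rangle$, so $g_m$ is analytic in the single quantity $\bu^\bal$. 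Letting $\bu^{\bde-\bal}$ be the greatest common monomial divisor of the family $\{T'_m\}_{m\geq 2}$ and setting $T_m := T'_m/\bu^{\bde-\bal}$ yields \eqref{eq:prelim}. If some $T_m$ is a unit, its constant term produces the monomial $c\bu^\bde$ in $\s_m-g_m$; since every monomial $c\bu^{q\bal}$ ($q\in\IQ$) has already been absorbed into $g_m$ during the construction of Lemma \ref{lem:fitHP}, the exponent $\bde$ cannot lie in $\IQ\bal$.

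For (2), I compute $\log\Jac\,\s$ in the coordinates of (1): writing $dg_i = h_i(\bu)\,d\bu^\bal$, the coefficient of $du_k/u_k$ in $d\s_i$ is $\bu^\bal h_i\al_k + \bu^\bde(\de_k T_i + u_k\p T_i/\p u_k)$, while the coefficient of $dv_j$ is $\bu^\bde\p T_i/\p v_j$. Every $2\times 2$ minor is then divisible by $\bu^{\bal+\bde}$ (the minors not involving $\s_1$ because of $\bde\geq\bal$). After dividing out $\bu^{\bal+\bde}$, the residual ideal modulo $(u_1,\dots,u_s)$ is generated by $\al_k\p T_i/\p v_j(0,\bv)$ together with $(\al_k\de_{k'}-\al_{k'}\de_k)T_i(0,\bv)$, for $i\geq 2$. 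When $d\in\{0,1\}$, one of these generators (either some $T_i$ itself or some $\p T_i/\p v_j$) is a unit, so $\rho(a)=0$. When $d=\infty$, every $T_i(0,\bv)\equiv 0$ and the residual is zero, so $\rho(a)=\infty$. When $2\leq d<\infty$, the generators $\p T_i/\p v_j(0,\bv)$ achieve order exactly $d-1$ in $\cO_{X,a}/(u_1,\dots,u_s)$, giving $\rho(a)=d-1$; this yields (a), (b), (c).

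For (3), when $0<\rho(a)<\infty$ we have $2\leq d<\infty$, so some $\p_\bv^{\bg_0}T_{i_0}$ is a unit with $|\bg_0|=d$; after renaming, take $i_0=2$. A linear change in the $\bv$ coordinates, relabelling $v := v_1$ and $\bw := (v_2,\dots,v_{n-s})$, arranges that the coefficient of $v^d$ in $T_2(0,v,\bzero)$ is nonzero, so $T_2$ is $v$-regular of order $d$. Regrouping each $T_i$ by $v$-degree gives $T_i = \tT_i v^d + \sum_{j<d}a_{ij}(\bu,\bw)v^j$, with $\tT_2 = U$ a unit. A Tschirnhaus substitution $v\mapsto v-s(\bu,\bw)$, the function $s$ being chosen by the implicit function theorem to kill the new $v^{d-1}$ coefficient of $T_2$ (the relevant derivative is $-d\,U(0)\neq 0$), achieves $a_{2,d-1}=0$. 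The final requirement that every monomial of order $\leq d+|\bde|$ in $\bu^\bde T_i$ be $\IQ$-linearly independent of $(\bal,0,\bzero)$ is arranged by further absorbing into $g_i$ any pure-$\bu$ monomials $c\bu^\bep$ of $T_i$ of order $\leq d$ with $\bde+\bep\in\IQ\bal$; these absorptions do not alter the leading $v$-structure, so the Weierstrass form and (1) are preserved.

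The main obstacle is in (2): checking that the residual has order \emph{exactly} $d-1$, rather than strictly smaller, demands a careful accounting of \emph{all} $2\times 2$ minors, in particular those coming from pairs $(\s_i,\s_{i'})$ with $i,i'\geq 2$, which could a priori contribute generators of lower $\bv$-order. Ruling this out uses $\bde\geq\bal$ together with a bifurcation according to whether $\bde$ is $\IQ$-linearly dependent on $\bal$; the $\IQ$-independence case established in (1) is precisely what ensures the coefficient $\al_k\de_{k'}-\al_{k'}\de_k$ of $T_i(0,\bv)$ does not vanish for some choice of $k,k'$.
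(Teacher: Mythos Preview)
Your proof is correct and follows the same route as the paper: part (1) via Lemma \ref{lem:fitHP}, part (2) by identifying the residual generators modulo $(u_1,\dots,u_s)$ as the $\al_k\,\p_{v_j}T_i$ and $(\al_p\de_q-\al_q\de_p)T_i$ (exactly the paper's computation), and part (3) by a generic linear change in $\bv$, a Tschirnhaus elimination (the paper's phrase is ``completing the $d$th power''), and absorption of the offending low-order pure-$\bu$ monomials into the $g_i$.

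One caveat on your closing paragraph: the $\IQ$-independence of $\bde$ and $\bal$ asserted in (1) holds only when some $T_i$ is a unit, i.e.\ when $d=0$, so you cannot invoke it for $d\geq 2$. Fortunately it is unnecessary there: the minors from row pairs $(i,i')$ with $i,i'\geq 2$, once divided by $\bu^{\bal+\bde}$ (using $\bde\geq\bal$) and reduced modulo $(u_1,\dots,u_s)$, already lie in the ideal generated by the $\p_{v_j}T_i(0,\bv)$ and $T_i(0,\bv)$, hence have order $\geq d-1$; the exact value $\rho(a)=d-1$ is witnessed by the row-$1$ minors $\al_k\bu^{\bal+\bde}\p_{v_j}T_i$ alone.
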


\begin{proof}
It is easy to obtain (1), where the formal expansion of each $g_i$ at $a$ is a sum of monomials $\bu^{\bbe}$
where each $\bbe$ is a rational multiple $q\bal$, $q\geq 1$ (see Remarks \ref{rem:lindep} and \ref{rem:fitHP}(1)).

In (2), if $d=0$, then
$\bde$ is linearly independent of $\bal$ and $T_i$ is a unit, for some $i$; say $i=2$. Then, after a coordinate
change, we can assume $T_2 = 1$, so that $\cF_{n-2}(\s)_a$ is generated by $\bu^{\bal+\bde}$. If $d=1$,
then, after a coordinate change, we can assume $T_2 = v_1$, and again $\cF_{n-2}(\s)_a$ is generated by 
$\bu^{\bal+\bde}$. On the other hand, (a) is clear and it is easy to see directly from the log Jacobian
matrix of $\s$ that the residual ideal $\cR_a$ of $\cF_{n-2}(\s)_a$ is generated modulo $\sum_{a\in E_k} \cI_{E_k,a}$
by the partial derivatives $\p_{v_l}T_i$, $i=2,\ldots,N$, $l =1,\ldots, n-s$, together with $(\al_p\de_q - \al_q\de_p)T_i$,
$i=2,\ldots,N$, $p,q=1,\ldots,s$. It follows that, if $d \geq 1$, then $\rho(a) = d-1$.

Given \eqref{eq:prelim}, after a permutation of the coordinates $(z_1,\ldots,z_N)$ and a generic linear
coordinate change in $\bv = (v_1,\ldots, v_{n-s})$, we can write the $T_i$ in the form \eqref{eq:weier},
where $\tT_2=U$ is a unit,
if we allow the sum in $T_2$ also to go from $j=0$ to $d-1$. Then we can eliminate $a_{2,d-1}$ by
completing the $d$th power with respect to $v$. The final condition in (3) can be obtained by ``moving''
the lower order monomials of (the formal expansions of) the $\bu^{\bde} T_i$ that are rational powers
of $\bu^{\bal}$ to the $g_i$.
\end{proof}

\section{Three-dimensional case: outline of the proof}\label{sec:outline}

In this section, we outline the proof Theorem \ref{thm:dim3}, which will be completed in Sections
6, 7 following.
Assume that $\dim X_0 =3$. By Theorems \ref{thm:fit0}, \ref{thm:fitHP} and \ref{thm:start}, there
is a resolution of singularities $\s: (X,E) \to (X_0, \Sing X_0)$ such that $\cF_0(\s),\, \cF_2(\s)$ are
principal monomial ideal sheaves and, moreover, if $a \in \supp E$ and $\log \rk_a \s > 0$, then
$\cF_1(\s)_a$ is also a principal monomial ideal. We will make $\cF_1(\s)$ a principal monomial ideal
sheaf by admissible blowings-up that preserve the preceding conditions on $\s$. Recall that
combinatorial blowings-up, in particular, preserve these conditions (Lemma \ref{lem:fit}(2)).

The blowings-up that we use to principalize $\cF_1(\s)$ will have the additional property that
$\log \rk \s$  is identically zero on the image in $X$ of every centre (or, equivalently, that
every centre of blowing up lies over $\supp \cF_2(\s)$). Thus we will blow up only over a discrete
subset of $X_0$.

We will say that $a$ is \emph{resolved} if $\cF_1(\s)_a$ is a principal monomial ideal;
i.e., $\rho(a) = 0$. Our proof of Theorem \ref{thm:dim3} has three main steps:

\medskip\noindent
{\bf Step 1.} Reduction to the case that $\rho(a) < \infty$, for all $a$. 

\medskip
If $0 < \rho(a) < \infty$ and $\log \rk_a \s = 0$, then the conclusions of Lemma \ref{lem:rho}(3) hold; in this case, we
will say that $\s$ is in \emph{Weierstrass form} at $a$.

Note that the set of 2-points forms a collection of curves (``2-curves'') each given by (a connected component of)
the intersection of precisely two components of $E$. A 2-curve either is closed or has limiting 3-points. By Lemma
\ref{lem:start}, we can assume that, throughout each 2-curve, either $\log \rk \s = 0$ or $\log \rk \s = 1$. A
2-curve on which $\log \rk \s = 0$ is relatively compact. By Theorem \ref{thm:start}, $\rho = 0$ on every 2-curve
where $\log \rk \s = 1$.

\begin{lemma}\label{lem:rhofinite}
By combinatorial blowings-up (more precisely, by composing $\s$ with a morphism $\tau: (\tX,\tE) \to (X,E)$
that restricts to a finite sequence of
combinatorial blowings-up over any relatively compact open subset of $X_0$), we can reduce
to the case that 
\begin{enumerate}
\item $\rho(a) < \infty$ at every point $a$ (and, therefore, by Lemma \ref{lem:rho}, we can choose coordinates at 
every nonresolved point $a$ and its image $\s(a)$ in which $\s$ has Weierstrass form);
\item $\rho$ is generically zero on every component of the set of 1-points, and on every 2-curve.
\end{enumerate}
In particular, in this case, the set of nonresolved points comprises isolated
2-points, isolated 1-points, and closed curves that are generically 1-points. Moreover, $\tau$ can be
chosen so that no centre of blowing up includes points over a 2-curve in $X$ where $\rho = 0$ (in particular,
every centre lies over the locus $(\log \rk \s = 0)$).
\end{lemma}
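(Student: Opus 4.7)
The plan is to localize all work to the closed subset $\Sigma := \{a \in X : \log\rk_a \s = 0\}$, since Theorem \ref{thm:start} guarantees $\cF_1(\s)_a$ is a principal monomial ideal (hence $\rho(a) = 0$) whenever $\log\rk_a \s \geq 1$. In particular every 2-curve not entirely over $Y_p$ has $\log\rk\s = 1$ by Lemma \ref{lem:start}, so $\rho \equiv 0$ there; thus $\Sigma$ is a locally finite union of isolated 3-points, relatively compact 2-curves, and closed pieces of the 1-point stratum, all mapping into the discrete set $Y_p \subset X_0$. I would take all centres of blow-up within $\Sigma$; this makes every centre an intersection of components of $E$ (hence combinatorial and globally defined in $X$), and ensures that over any relatively compact open of $X_0$ only finitely many centres arise, in particular that no centre meets a 2-curve on which $\rho = 0$.

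For the iteration I would proceed stratum by stratum in order of increasing dimension. At an $s$-point $a$ with $\rho(a) = \infty$, the residual ideal $\cR_a$ lies in $\sum_{a\in E_q}\cI_{E_q,a}$ by definition; pulling back under the combinatorial blow-up $\be$ with centre the unique minimal stratum of $E$ through $a$ (the 3-point itself when $s=3$, the 2-curve through $a$ when $s=2$) and invoking $\cF_1(\s\circ\be) = \be^*\cF_1(\s)$ from Lemma \ref{lem:fit}(2), one sees that in every exceptional chart each $u_i$ involved in the centre pulls back as a multiple of the single new exceptional parameter. Consequently every generator of $\cR_a$ becomes divisible by that parameter on $\be^{-1}(a)$, so an additional power can be absorbed into the monomial factor of $\cF_1(\s\circ\be)$ at each exceptional point, strictly decreasing the local contribution to $\rho$. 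Iterating this first on all 3-points where $\rho = \infty$ (Step~1), and then on every 2-curve along which $\rho$ is generically $\infty$ (Step~2), drives $\rho$ to be finite everywhere; a final pass along 1-point strata where $\rho$ is generically positive, blowing up the closed proper subvariety of the stratum where $\cR$ fails to be a unit (a sublocus of $\Sigma$), makes $\rho$ generically zero on every 1-point component and on every 2-curve.

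The hard part will be proving termination of these iterations over any relatively compact open of $X_0$. I would attach to each stratum of $\Sigma$ a lexicographic invariant, essentially the pair (generic value of $\rho$, order of $\cR$ along the stratum) ordered by increasing dimension of the stratum, and verify that the blow-up prescribed in the current case strictly decreases this invariant at every maximal locus without creating new ones. Upper-semicontinuity of $\rho$ (Lemma \ref{lem:invprops}(3)) together with its monotonicity under combinatorial blow-up (Lemma \ref{lem:invprops}(5)) are the key ingredients, reinforced by the observation that the 3-point contribution (where $\rho \in \{0,\infty\}$ by Lemma \ref{lem:invprops}(4)) is dispatched in finitely many steps because each blow-up of such a 3-point replaces it by finitely many new 3-points at which the extractable monomial exponent has strictly increased. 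Since $\Sigma$ meets any compact set in finitely many strata, the composite $\tau$ is a finite composition of combinatorial admissible blow-ups over any relatively compact open, yielding the morphism claimed in the lemma.
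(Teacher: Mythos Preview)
Your overall localization to $\Sigma = \{\log\rk \s = 0\}$ and the choice to blow up only strata of $E$ inside $\Sigma$ match the paper. But there are two genuine gaps.

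\textbf{The 1-point step is both unnecessary and non-combinatorial.} At a 1-point $a$ one has $\cR_a \not\subset (u_1)$, so $\rho(a) < \infty$ automatically; then the Weierstrass form of Lemma~\ref{lem:rho} shows that at a generic nearby 1-point $b$ (where the distinguished coordinate $v$ takes a nonzero value) some $T_i$ is a unit, hence $\rho(b)=0$. No blowing-up is required on 1-point strata. Your proposed ``final pass'' blows up the sublocus of a 1-point stratum where $\cR$ is a nonunit; this is \emph{not} an intersection of components of $E$, so it is not a combinatorial centre, contradicting your own setup (and the statement of the lemma).

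\textbf{The termination arguments at 2- and 3-points do not work as written.} Saying that after a combinatorial blow-up ``an additional power can be absorbed into the monomial factor'' or ``the extractable monomial exponent has strictly increased'' does not give termination: these exponents can grow unboundedly while $\rho$ stays equal to $\infty$. Likewise your proposed invariant $(\text{generic }\rho,\ \text{order of }\cR\text{ along the stratum})$ is identically $(\infty,\infty)$ at the points in question and does not drop. The paper's argument is concrete and different: at a 2-point $a$ with $\rho(a)=\infty$, write $T_i = \sum_j a_{ij}(u_1,u_2)v^j$ and observe that blowing up the 2-curve repeatedly is exactly principalization of the ideal $(a_{ij})$ in the two variables $u_1,u_2$; resolution of singularities in dimension~2 terminates this, and once $(a_{ij})$ is principal some $T_i$ has a coefficient that is a unit times a monomial, forcing $d(a)<\infty$. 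At a 3-point, $\cF_1(\s)_a$ is already a monomial ideal in $u_1,u_2,u_3$, and combinatorial (toric) principalization of a monomial ideal terminates. You need these explicit finiteness mechanisms, not an abstract lexicographic descent.
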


The proof of Lemma \ref{lem:rhofinite} following involves repeated blowings-up of 2-curves and 3-points. The
last assertion of the lemma is important in the case of (not necessarily compact) analytic varieties
because it means that only relatively compact 2-curves will be blown up, and this will imply that
only finitely many blowings-up will be needed over a given 2-curve in $X$. In fact, Lemma 
\ref{lem:rhofinite} involves only finitely many blowings-up over each point of the discrete subset 
$\Gamma$ of $X_0$ given by the image of $(\log \rk \s = 0)$.

\begin{proof}[Proof of Lemma \ref{lem:rhofinite}]
It is clear that $\rho(a) < \infty$ at every 1-point $a$. It follows from Lemma \ref{lem:rho} that 
$\rho$ is generically zero on every component of the set of 1-points. 

Recall we can assume that $\log \rk \s$ is constant (either $0$ or $1$) on every 2-curve in $X$,
and $\rho =0$ on every 2-curve in $X$ where $\log \rk \s = 1$.

Three-points are isolated. It is also clear that,
by combinatorial blowings-up, we can reduce to the case that $\rho(a) = 0$ at every 3-point $a$. (The blowings-up involved have centres that are 3-points or closures of 2-curves, but it is unnecessary to
blow up 2-curves that are already resolved; i.e., on which $\rho =0$.)
It follows that $\rho$ is generically zero on every 2-curve with a 3-point $a$ as a limiting point.

Suppose that $a$ is a 2-point. If $\rho(a) < \infty$ at a 2-point $a$, then $\rho$ is generically zero on the
2-curve containing $a$, again by Lemma \ref{lem:rho}.
A blowing-up with centre given by the intersection of two components of $E$ is
combinatorial. If $\rho(a) = \infty$, then we can reduce to the case that $\rho < \infty$ over $a$ by finitely
many such blowings-up (since the effect of such blowings-up is to principalize the ideal generated by the coefficients
$a_{ij}(u_1,u_2)$ of formal expansions $T_i = \sum_{j=0}^\infty a_{ij}(u_1,u_2)v^j$ at $a$;
cf. Lemma \ref{lem:rho}). It therefore follows that we can reduce to $\rho < \infty$ on every 2-curve, by
a locally-finite sequence of combinatorial blowings-up.
\end{proof}

\medskip\noindent
{\bf Step 2.} Reduction to prepared normal form.

\medskip
The following lemma \ref{lem:prepnormal} will be proved in Section \ref{sec:prepnorm}. 

\begin{lemma}[Prepared normal form]\label{lem:prepnormal}
Suppose that $\rho(a) < \infty$, for all $a\in X$ (and that $\rho = 0$ on every 2-curve with non-compact 
closure; cf. Step 1 above).
Assume that $\rho$ takes a maximum value $\rho_{\max} > 0$,
and let $\Sigma \subset X$ denote the closed subset on
which $\rho$ takes the value $\rho_{\max}$ (so that $\Sigma \subset \supp E$). 
Then, by a (locally) finite sequence of admissible 
blowings-up over $(\log \rk \s = 0)$, we can reduce to the case that,
for every point $a\in \Sigma$, $\s$ has the Weierstrass form of Lemma \ref{lem:rho},
where the coefficients $a_{ij}$ satisfy the following additional conditions.
\begin{enumerate}
\item At a 2-point $a$, with adapted coordinates $(\bu,v)=(u_1,u_2,v)$, where $\supp E = (u_1u_2=0)$,
\begin{equation}\label{eq:norm2}
\begin{alignedat}{2}
a_{ij} &= \bu^{\br_{ij}}\ta_{ij}(\bu), \quad &&i = 2,\ldots,N,\ \ j=1,\ldots,d-1,\\
a_{i_0,0} &= \bu^{\bbe}, &&\text{for some } i_0,
\end{alignedat}
\end{equation}
where each $\ta_{ij}$ is either zero or a unit, $\bde + \bbe$ is linearly independent of $\bal$,
and $\bu^{\bbe}$ divides $a_{i0}$, for all $i$.

\medskip
\item At a 1-point $a$, with adapted coordinates $(u,v,w)$, where $\supp E = (u=0)$,
\begin{equation}\label{eq:norm1}
\begin{alignedat}{2}
a_{ij} &= u^{r_{ij}}w^{s_{ij}}\ta_{ij}(u,w) \quad &&i = 2,\ldots,N,\ \ j=1,\ldots,d-1,\\
a_{i_0,0} &= u^{\be}w, &&\text{for some } i_0,
\end{alignedat}
\end{equation}
where each $\ta_{ij}$ is either zero or a unit, and $u^\be$ divides $a_{i0}$, for all $i$.
\end{enumerate}
The blowings up involved do not increase the value of $\rho$ over any point.
\end{lemma}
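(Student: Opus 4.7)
The key observation is that at any point $a\in\Sigma$, the Weierstrass form of Lemma~\ref{lem:rho}(3) presents the tails $T_i$ as degree-$d$ polynomials in $v$ whose coefficients $a_{ij}$ are power series in the two remaining coordinates --- namely $(u_1,u_2)$ at a $2$-point and $(u,w)$ at a $1$-point. Thus the problem of prepared normal form reduces to principalizing a two-variable coefficient ideal $\cJ:=(a_{ij}:i=2,\ldots,N,\,j=0,\ldots,d-1)$ by admissible blowings-up of $X$. Since $\log\rk\s = 0$ on $\Sigma$ and $\Sigma$ is closed, $\Sigma$ lies over a discrete subset of $X_0$, so every admissible blowing-up I take will automatically lie over $(\log\rk\s=0)$. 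I must simultaneously verify that my centres do not raise $\rho$, so that $\rho_{\max}$ is not increased, and then show that the remaining structural conditions on $a_{i_0,0}$ are a consequence of monomialization together with the linear-independence clause in Lemma~\ref{lem:rho}(3).

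First I would treat the $2$-points of $\Sigma$: by Lemma~\ref{lem:rhofinite}, these are isolated on any $2$-curve. The ideal $\cJ$ lives in the two-variable ring $\bu=(u_1,u_2)$, and its principalization can be effected by a finite sequence of blowings-up of the corresponding $2$-curves $(u_1=u_2=0)$, exactly as in two-dimensional embedded resolution. Such blowings-up are combinatorial, so by Lemma~\ref{lem:fit}(2) and Lemma~\ref{lem:invprops}(5) they preserve $\rho$. After principalization, the minimum-exponent monomial $\bu^\bbe$ among the $a_{i0}$ appears as some $a_{i_0,0}$; the linear independence of $\bde+\bbe$ from $\bal$ is automatic, for if $\bde+\bbe$ were a rational multiple of $\bal$, then the monomial $\bu^{\bde+\bbe}$ of order $\leq d+|\bde|$ in $\bu^\bde T_{i_0}$ would already have been absorbed into $g_{i_0}$ by construction of Weierstrass form.

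The main obstacle is the $1$-point case, where $w$ is not adapted to $E$, so principalization of the two-variable ideal $\cJ\subset\cO[[u,w]]$ requires blowings-up of curves of type $(u=w=0)$ that are \emph{not} combinatorial. Here I would define an auxiliary upper-semicontinuous invariant of $\cJ$ (in the Hironaka / Bierstone--Milman style), take its maximum locus intersected with $\supp E$ as the centre, and argue that this locus is a globally defined smooth curve inside a single component of $E$, hence admissible. The $\rho$-invariance under such a blowing-up is verified by an explicit chart computation: in the $u$-chart $(u,v,w)=(u',v,u'w')$ and in the $w$-chart $(u,v,w)=(u''w'',v,w'')$, the leading term $\tT_2 v^d$ of $T_2$ survives with $\tT_2$ still a unit, so the integer $d$ of Lemma~\ref{lem:rho}(2) is preserved, and hence $\rho\leq d-1$ after transformation. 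Iterating (and interleaving further combinatorial blowings-up at the newly created $2$-points) terminates by standard two-dimensional principalization, giving $a_{ij}=u^{r_{ij}}w^{s_{ij}}\ta_{ij}(u,w)$ with $\ta_{ij}$ a unit or zero. Finally, the $w$-factor in $a_{i_0,0}=u^\be w$ is forced: a pure-$u$ leading coefficient $u^\be$ would produce a term $u^{\de+\be}$ in $u^\de T_{i_0}$, which is rationally dependent on $\bal=(\al)$ in the one-dimensional lattice, contradicting the Weierstrass-form condition; since $s_{i_0,0}\geq 1$, minimality then gives $s_{i_0,0}=1$. The hard part of this entire argument will be the globalization of the $1$-point centres and the proof that they do not raise $\rho$.
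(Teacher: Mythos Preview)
Your treatment of $2$-points is essentially the paper's: blow up the $2$-curve (a combinatorial centre), use Lemma~\ref{lem:fit}(2) and Lemma~\ref{lem:invprops}(5) to control $\rho$, and iterate until the two-variable coefficient ideal is monomial. Fine.

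The $1$-point case, however, has a genuine gap, and it is exactly the gap you flag in your last sentence without closing. The curve $(u=w=0)$ has no global meaning: the Weierstrass coordinate $w$ at a $1$-point is not canonical (it depends on the choice of Weierstrass presentation of the $T_i$), so neither is the coefficient ideal $\cJ$, and therefore neither is any ``maximum locus of an auxiliary invariant of $\cJ$''. Different Weierstrass splittings at the same point give different $\cJ$'s, and there is no reason their maximal loci should patch to a smooth global curve in $E$. The paper makes this point explicitly (see Remark~\ref{rem:2pt}): blowing up $(u=w=0)$ gives a correct \emph{local} version of the lemma, but the challenge is to achieve the same effect by \emph{global} centres only.

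The paper's resolution is to replace the curve blow-up by the blow-up of the \emph{point} $a$ itself, which is always globally defined. This is organized via a secondary invariant $\io(a)$ (the number of point blowings-up needed to principalize a variant of your $\cJ$ in the transverse two-plane) and an induction on the pair $(\rho,\io)$. At a $1$-point, the point blow-up is analyzed chart by chart: in the $u$- and $w$-charts it behaves like the blow-up of the $(u,w)$-plane at the origin and $\io$ drops; the $v$-chart forces $\rho$ to drop. There is a genuinely delicate subcase, $\ord_a(T_i)<d(a)$, where a single point blow-up does not suffice; the paper then principalizes an auxiliary ideal $\cJ$ supported at $a$ (hence with globally defined centres), and the verification that $\rho$ does not increase requires a careful combinatorial argument (Lemma~\ref{lem:1pt2}). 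Your proposal does not anticipate this subcase and would need substantial new work to handle it even if the globalization problem were solved.
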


We will say that $a$ is a \emph{prepared} 2-point 
(resp., a \emph{prepared} 1-point) if $\s$ has Weierstrass form \eqref{eq:weier} at $a$,
where the coefficients are given by \eqref{eq:norm2} (resp., \eqref{eq:norm1}), with respect to suitable
adapted coordinates at $a$. In either case, we will also say that $\s$
has \emph{prepared normal form} at $a$.
At a \emph{generic} prepared 1-point, all $s_{ij} = 0$ in \eqref{eq:norm1}. Points where not all $s_{ij} = 0$
will be called \emph{non-generic}.

\begin{remark}\label{rem:transverse}
Suppose that $\rho_{\max} > 0$. Then all points of the maximum locus $\Sigma$ of $\rho$ are unresolved and, 
if $\s$ has prepared normal form at every point of $\Sigma$, then (the closure of) 
any curve of 1-points in $\Sigma$ has only normal crossings with respect to 2-curves.
\end{remark}

\medskip\noindent
{\bf Step 3.} Further admissible blowings-up to decrease the maximal value of the invariant $\rho$.

\medskip
Lemma \ref{lem:decrho} following is the subject of Section \ref{sec:decrho}. Theorem \ref{thm:dim3} then follows by
induction on the maximal value of $\rho$. 

\begin{lemma}\label{lem:decrho} Assume that $\rho(a)<\infty$, for all $a \in X$, and that $\rho$ takes a maximum
value $\rho_{\max} > 0$ on $X$. Let $\Sigma \subset X$ denote the maximum locus of $\rho$.  
Suppose that $\s$ has prepared 
normal form at every point $a \in \Sigma$ (see Lemma \ref{lem:prepnormal}). Let $A$ denote the discrete set of all 
non-generic points of $\Sigma$ (i.e., all 2-points and non-generic 1-points of $\Sigma$). Then there is a morphism
$\tau: (\widetilde{X},\widetilde{E}) \to (X,E)$ given by a locally finite sequence of admissible blowings-up over $\Sigma$
such that $\rho < \rho_{\max}$ throughout $\tX$. Moreover, $\tau$ can be realized as a composite 
$\tau = \tau_3\circ \tau_2\circ \tau_1$, where
\begin{enumerate}
\item $\tau_1: (X_1,E_1) \to (X,E)$ is a single blowing-up with centre $A$;
\item $\tau_2:(X',E') \to (X_1,E_1)$ is the composite of a locally finite sequence of admissible blowings-up 
$(X_{i+1},E_{i+1}) \to (X_i,E_i)$, $i\geq 1$, with centres 
$\overline{\Sigma_i \setminus A_i}$, where 
$\Sigma_i$ is the maximum locus of $\rho$ and $A_i$ the preimage of $A$ in $X_i$;
\item $\tau_3: (\widetilde{X},\widetilde{E}) \to (X',E')$ is the composite of a locally finite sequence of 
blowings-up with centres over $A$.
\end{enumerate}
\end{lemma}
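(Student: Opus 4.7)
The plan is to perform a local case analysis at every point of $\Sigma$, verifying that the composition $\tau = \tau_3 \circ \tau_2 \circ \tau_1$ strictly decreases $\rho$ below $\rho_{\max}$ everywhere on $\tX$, while keeping all centres globally defined and admissible. The global picture is forced by the statement: $A \subset X$ is discrete, and by Remark \ref{rem:transverse}, $\Sigma \setminus A$ is a disjoint union of smooth, locally closed curves having normal crossings with $E$, so each $\overline{\Sigma_i \setminus A_i}$ used in $\tau_2$ is a globally defined admissible centre, and the centres over $A$ in $\tau_3$ are globally defined a fortiori.

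For $\tau_1$ I would compute the transform of $\s$ in each chart of the blowing-up with centre a single point $a \in A$, using the prepared normal form \eqref{eq:norm2} at a 2-point and \eqref{eq:norm1} at a non-generic 1-point. The algebraic data of the prepared form---the divisibility $\bu^{\bal} \mid \s_i$, the distinguished monomial $a_{i_0,0} = \bu^{\bbe}$ (respectively $u^{\be}w$), and the linear independence of $\bde + \bbe$ from $\bal$---combine to show that in each chart the transformed $\s$ can be re-expressed in Weierstrass form in adapted coordinates, and that preimages of $a$ in the new maximum locus of $\rho$ are either generic 1-points lying on globally defined smooth curves (to be absorbed into $\tau_2$) or form a discrete subset $A_1 \subset \tau_1^{-1}(A)$ to be handled by $\tau_3$. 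The 2-point and non-generic 1-point subcases must be treated separately, but in both the key identity is that extracting the common monomial factor on each chart either strictly reduces the numerical data of the Weierstrass polynomial or converts a non-generic 1-point into a generic 1-point on a new exceptional curve.

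The core of the argument is the analysis in $\tau_2$. At a generic 1-point $a \in \Sigma \setminus A$ with adapted coordinates $(u,v,w)$, one has locally $\Sigma = (u=v=0)$; blowing up this curve, the $u$-chart $(v = u'v')$ gives
\[
T_i \circ \tau \;=\; (u')^d\, \tT_i(u',u'v',w)\, v'^d \;+\; a_{i0}(u',w) \;+\; \sum_{j=1}^{d-1} (u')^{r_{ij}+j}\, \ta_{ij}(u',w)\, v'^j,
\]
and after extracting the common $u'$-power required to isolate the new exceptional component and re-establishing the Weierstrass form, the new Weierstrass order either drops strictly (so $\rho$ drops at the corresponding point) or the locus where $\rho$ remains equal to $\rho_{\max}$ is concentrated on a proper closed subset of the new exceptional divisor. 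The $v$-chart, containing the 2-point where the new exceptional divisor meets the strict transform of $(u=0)$, is treated analogously using \eqref{eq:norm2}. Across the countable sequence of blowings-up, a secondary invariant built from the exponents $(r_{ij},j)$ and $\be$ strictly decreases at each step, giving local finiteness of $\tau_2$.

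The stage $\tau_3$ is then a (locally) finite sequence of admissible blowings-up with centres over $A$, which handle the isolated points of $\tau_2^{-1}(A)$ where $\rho$ still equals $\rho_{\max}$; a local computation at each such point in the re-established Weierstrass form (similar to the $\tau_1$ analysis) shows that finitely many further blowings-up reduce $\rho$ strictly. The hard part will be $\tau_2$: one must verify both the strict decrease of the secondary numerical invariant (ensuring local finiteness) and the stability of the prepared normal form at points of the successive maximum loci $\Sigma_i$. The essential leverage is provided by the rigid exponent structure of \eqref{eq:norm1} and \eqref{eq:norm2}---particularly the distinguished index $i_0$ with $a_{i_0,0} = u^{\be}w$ at a prepared 1-point, which fixes the $w$-direction of the residual ideal defining $\rho$ via $\cF_1(\s)$ and prevents new maxima from appearing generically along the new exceptional divisor.
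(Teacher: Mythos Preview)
Your outline has the right three-stage architecture, but two concrete mechanisms you rely on are not the ones that actually work, and the gap shows most clearly in your treatment of $\tau_2$.

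First, at a generic prepared 1-point you blow up $(u=v=0)$ and say the $v$-chart ``is treated analogously using \eqref{eq:norm2}''. It is not. The origin of the $v$-chart is a new 2-point at which the transform of $\s$ is \emph{not} in prepared normal form; in fact one can have $\rho = \infty$ there. What actually happens is that the weak transform of the monomial ideal $\cI = (v^d,\, u^{r_{ij}}v^j,\, u^\be)$ becomes a monomial ideal $\cJ$ in the two exceptional variables, and one checks that $\rho(c)=\infty$ precisely at points where $\cJ$ is not principal. Thus the successive centres $\overline{\Sigma_i\setminus A_i}$ of $\tau_2$ are not governed by a ``secondary invariant built from the exponents $(r_{ij},j)$ and $\be$'' decreasing step by step as you propose, but rather by the finitely many combinatorial blowings-up needed to principalize $\cJ$. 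This is what makes $\tau_2$ locally finite and simultaneously identifies its centres with the global $\rho$-maximum loci.

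Second, once $\cJ$ (equivalently $\tau^*\cI$) is principal, you still need a reason why $\rho$ drops. The paper supplies this via a separate device you do not introduce: a \emph{declared} local exceptional divisor $D$ obtained by adjoining $(v=0)$ (and $(w=0)$ at a non-generic 1-point) to $E$, together with the associated monomial ideal $\cI$, and a proposition stating that if $\tau$ is $D$-combinatorial and $\tau^*\cI$ is principal $\tD$-monomial, then $\rho(b)<\rho(a)$. Without this, your assertion that ``extracting the common $u'$-power \ldots\ either strictly reduces the numerical data \ldots\ or the locus where $\rho$ remains equal to $\rho_{\max}$ is concentrated on a proper closed subset'' is unsupported.

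Finally, your description of what happens over $A$ is too optimistic. For a 2-point or non-generic 1-point $a\in A$, a single point blow-up $\tau_1$ does \emph{not} simply produce generic 1-points plus a discrete residual set. The paper needs a four-step local argument: first run the generic-1-point procedure on $U\setminus\{a\}$ and take closures of centres; then separately decrease $\rho$ at the limit points of the resulting 1-curves (this requires further blowings-up over $a$ to reorder the exponents $r_{ij2}$); then principalize the residual ideal over $a$ away from those limit points; and only then finish at the limit points. Your $\tau_1$/$\tau_3$ sketch does not account for this interleaving, nor for the fact that the curves in $\overline{\Sigma_i\setminus A_i}$ acquire limit points over $A$ whose local structure must be controlled explicitly (equations \eqref{eq:I1}--\eqref{eq:I3} in the paper).
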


\begin{proof}[Proof of Theorem \ref{thm:dim3}] 
The theorem follows from Lemmas \ref{lem:rhofinite}, \ref{lem:prepnormal} and \ref{lem:decrho} 
by induction on the maximal value of $\rho$,
at least for $X$ on which $\rho$ assumes a maximum value (e.g., algebraic varieties or the restrictions of 
analytic varieties to relatively compact open sets). Theorem \ref{thm:dim3} follows for analytic varieties,
in general, because the preceding lemmas show that, if we start with a resolution of singularities $\s$ as at
the beginning of this section, then $\rho$ can be everywhere decreased to zero
by finitely many blowings-up over each point of the discrete subset $\Gamma$ of $X_0$ given
by the image of $(\log \rk \s = 0)$.
\end{proof}

\section{Prepared normal form}\label{sec:prepnorm}

In this section, we prove Lemma \ref{lem:prepnormal}. The proof is by induction on
pairs $(\rho(a),\io(a))$ (ordered lexicographically), where $\io(a)$ is a secondary invariant with values
in $\IN$, introduced in the following subsection. 

\subsection{Secondary invariant}\label{subsec:io}
Suppose that $\s$ has Weierstrass form \eqref{eq:weier} in adapted coordinates
$(\bu,v)$ at a 2-point $a$; respectively, in adapted coordinates $(u,v,w)$ at a 1-point $a$. 
For each $i=2,\ldots,N$, write
\begin{align*}
d\s_1|_{(v=0)}\wedge d\s_i|_{(v=0)} &= d(\bu^{\bal})\wedge d(\bu^{\bde}a_{i0}(\bu))\\
                                                         &= H_i(\bu) \frac{du_1}{u_1} \wedge \frac{du_2}{u_2};
\end{align*}
respectively,      
\begin{align*}
d\s_1|_{(v=0)}\wedge d\s_i|_{(v=0)} &= d(u^{\al})\wedge d(u^{\de}a_{i0}(u,w))\\    
                                                         &= H_i(u,w) \frac{du}{u} \wedge dw   .
\end{align*}                                            
In either case, let $\cH_a$ denote the ideal generated by $H_i$,  $i=2,\ldots,N$, in the local ring
of ${(v=0)}$ at $a$.

\begin{remarks}\label{rem:H}
$\cH_a$ is the log Fitting ideal of $2\times 2$ minors of the morphism $\s|_{(v=0)}$ at $a$.
Blowing up of the point $a$ in ${(v=0)}$ is admissible for $E|_{(v=0)}$. If $\cH_a$ is a principal
monomial ideal (i.e., generated by a monomial in components of $E|_{(v=0)}$), then,
by Theorem \ref{thm:fit0} and Lemma \ref{lem:fitHP}, $\s|_{(v=0)}$ 
can be written in Hsiang-Pati form $d\s_1|_{(v=0)} = d(\bu^{\bal})$ (resp., $d(u^{\al})$),
and $a_{i_0,0} = \bu^{\bbe}$ (resp., $u^{\be}w$), for some $i_0$, where $\bu^{\bbe}$
(resp., $u^{\be}$) satisfies the additional conditions given in Lemma \ref{lem:prepnormal}.                                                                                                
\end{remarks}

Let $\cG_a$ denote the ideal 
$$
\cG_a := \Bigg(\prod_{(i,j)\in J}a_{ij}\Bigg)\cdot \cH_a,
$$
where $J:= \{(i,j): a_{ij}\neq 0,\, i=2,\ldots,N,\, j=1,\ldots,d-1\}$. 

\begin{remarks}\label{rem:G}
(1) Since $\cG_a$ is an ideal of functions in two variables, it follows from resolution of singularities that,
after a finite number $\io(a; \bu)$ (respectively,
$\io(a;(u,w))$) of blowings-up of discrete sets (beginning
with $a$ and) lying over $a$, the pull-back of $\cG_a$ is a principal ideal generated by a monomial
$\tbu^{\bg}$ (resp., $\tu^p \tw^q$) with respect to adapted coordinates $\tbu$ (resp., $(\tu,\tw)$) at
any point over $a$. (At each step, the centre of blowing-up is the finite set of points over $a$ at which
the pull-back of $\cG_a$ is not already a principal ideal generated by such a monomial.) Note that the
blowing-up of $(v=0)$ with centre $a$ corresponds to the blowing-up of $X$ with centre
$(\bu = \bzero)$ (resp., $(u=w=0)$).

\medskip
(2) In particular, multiplication of $\cG_a$ by a monomial in components of the exceptional
divisor does not change the value of $\io(a; \bu)$ (resp., $\io(a;(u,w))$).
\end{remarks}

\begin{definition}\label{def:io}
Let $\io(a)$ denote the minimum of $\io(a; \bu)$ (respectively,
$\io(a;(u,w))$) over all adapted coordinate systems $(\bu,v)$ (resp., $(u,v,w)$) at $a$ in which $\s$
has Weierstrass form.
\end{definition}

Of course, $\cH_a$ and $\cG_a$ themselves depend on the coordinates in Weierstrass form.

\begin{lemma}\label{lem:io=0}
If $\io(a) = 0$, then $\s$ has prepared normal form at $a$.
\end{lemma}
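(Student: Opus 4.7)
Plan: The hypothesis $\io(a)=0$ means that we may fix adapted coordinates at $a$ in which $\s$ is in Weierstrass form \eqref{eq:weier} and in which the ideal $\cG_a$ is already a principal monomial ideal in the local ring of $(v=0)$ at $a$, without any blowings-up. I will extract the prepared normal form directly from the factorization $\cG_a = F \cdot \cH_a$, where $F := \prod_{(i,j)\in J} a_{ij}$, together with Remark \ref{rem:H}.

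First step: by definition of $J$, each $a_{ij}$ with $(i,j)\in J$ is nonzero, so $F\neq 0$. The local ring of $(v=0)$ at $a$ is a $2$-dimensional regular local ring, hence a UFD. Since $\cG_a$ is principal and generated by a monomial $M$, and $\cG_a$ is generated by the elements $F h_i$ (where the $h_i$ generate $\cH_a$), at least one product $F h_{i_0}$ equals $M$ times a unit. By unique factorization, both $F$ and $h_{i_0}$ are individually monomials times units. It follows that each $a_{ij}$ with $(i,j)\in J$ is a monomial times a unit (giving the first lines of \eqref{eq:norm2} or \eqref{eq:norm1}), and that $\cH_a$, being a unit multiple of $M$ divided by the monomial part of $F$, is itself a principal monomial ideal.

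Second step: apply Remark \ref{rem:H}. Because $\cH_a$ is a principal monomial ideal, $\s|_{(v=0)}$ can be put into Hsiang-Pati form by a coordinate change in the variables transverse to $v$ (in $\bu$ at a $2$-point, or in $(u,w)$ at a $1$-point). This furnishes an index $i_0$ with $a_{i_0,0}=\bu^{\bbe}$ (respectively $u^{\be}w$), the divisibility of each $a_{i0}$ by $\bu^{\bbe}$ (resp.\ $u^{\be}$), and the linear independence of $\bde+\bbe$ from $\bal$, which are exactly the remaining requirements of Lemma \ref{lem:prepnormal}.

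The main (mild) obstacle: to patch the two steps together, one must check that the coordinate change supplied by Remark \ref{rem:H} --- extended to $X$ by leaving $v$ untouched --- preserves both the Weierstrass form of $\s$ in $v$ and the property that each $a_{ij}$ with $(i,j)\in J$ is a monomial times a unit. The extension keeps the $v$-degree structure of \eqref{eq:weier} intact since $v$ is not altered, and the transverse change sends each $u_k$ to $u_k$ times a unit, so any expression of the form monomial-times-unit in $\bu$ (or $(u,w)$) remains of the same form; a final completion of the $d$-th power in $v$ restores $a_{2,d-1}=0$. This routine bookkeeping is the only thing to verify before concluding that $\s$ is in prepared normal form at $a$.
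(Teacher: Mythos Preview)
Your approach is the same as the paper's: use $\io(a)=0$ to conclude that each $a_{ij}$ with $(i,j)\in J$ is a monomial times a unit and that $\cH_a$ is principal monomial, then invoke Remark~\ref{rem:H} (i.e.\ Lemma~\ref{lem:fitHP} in dimension two) to put the $a_{i0}$ into Hsiang--Pati shape. The UFD argument in your Step~1 is a nice way to make explicit what the paper states without proof.

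There is one genuine omission at $1$-points. Your UFD argument only shows that $\cH_a$ is generated by a monomial $u^p w^q$ in the two variables $(u,w)$, whereas Remark~\ref{rem:H} requires a monomial in the components of $E|_{(v=0)}$ --- that is, a pure power of $u$. The paper handles this with the sentence ``necessarily $q=0$ since $\s|_{(v=0)}$ has rank $2$ outside $\supp E$'': because $\s$ is a resolution of singularities, $\s|_{(v=0)}$ is an immersion off $E$, so $\cH_a=\cF_0(\s|_{(v=0)})$ is supported in $(u=0)$ and the $w$-exponent vanishes. You should add this.

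This same point is what actually validates your bookkeeping claim. At a $1$-point the Hsiang--Pati coordinate change from Lemma~\ref{lem:fitHP} replaces $w$ by a new coordinate $\ow$; your justification (``sends each $u_k$ to $u_k$ times a unit'') says nothing about $w$. Once you know $\cH_a=(u^{p'})$, the relevant residual $S_{i_0}$ satisfies $S_{i_0}=w\cdot(\text{unit})$ (integrate $\partial a_{i_0,0}/\partial w = u^{\be}\cdot(\text{unit})$), so the change is $\ow = w\cdot(\text{unit})$, and \emph{that} is why monomial-times-unit in $(u,w)$ is preserved. Without $q=0$ this would fail. At $2$-points there is no issue: the change $\ou_h=S^{\ep_h}u_h$ is exactly of the type you describe.
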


\begin{proof}
Assume that $\io(a) = 0$. Then we can choose adapted local coordinates in which each coefficient
$a_{ij}$, $j>0$, in \eqref{eq:weier} is a monomial times a unit as in \eqref{eq:norm2} (resp., \eqref{eq:norm1}),
and $\cH_a$ is a principal ideal generated by a monomial $\bu^{\bg}$ (resp., $u^p w^q$) as in Remarks
\ref{rem:G}. In the 1-point case, necessarily $q=0$ since $\s|(v=0)$ has rank $2$ outside $\supp E$ (i.e.,
since the Fitting ideal of $3 \times 3$ minors of $\log \Jac \s$ is supported in $E$). By Lemma
\ref{lem:fitHP}, we can choose coordinates also in which the coefficients $a_{i0}$ satisfy the conditions
of Lemma \ref{lem:prepnormal}.
\end{proof}

\begin{lemma}\label{lem:semicontin}
If $\io(a) \neq 0$, then there is a neighbourhood of $a$ in which $(\rho(b), \io(b)) < (\rho(a), \io(a))$,
$b\neq a$.
\end{lemma}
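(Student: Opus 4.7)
The plan is an upper-semicontinuity argument for the pair $(\rho,\io)$ in the lexicographic order. Since $\rho$ is upper-semicontinuous (Lemma \ref{lem:invprops}(3)), there is a neighbourhood $U$ of $a$ on which $\rho(b)\le\rho(a)$. If $\rho(b)<\rho(a)$ the lexicographic inequality is immediate, so it suffices to show $\io(b)<\io(a)$ for every $b\in U\setminus\{a\}$ with $\rho(b)=\rho(a)$. Note that $\io(a)>0$ forces $0<\rho(a)<\infty$, since $\io$ is defined only where $\s$ is in Weierstrass form.

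Next I would restrict the type of such $b$. By Lemma \ref{lem:invprops}(4), no 3-point has $\rho\in(0,\infty)$. By Lemma \ref{lem:rhofinite}(2), $\rho$ is generically zero on every 2-curve, and since $\rho<\infty$ everywhere and $\{\rho\ge\rho(a)\}$ is closed, the set $\{\rho=\rho(a)\}$ is discrete on any 2-curve through $a$; after shrinking $U$, the only such 2-point is $a$ itself. Hence every relevant $b$ is a 1-point. I would then produce adapted Weierstrass coordinates at $b$ by translating the coordinates at $a$ in the smooth directions (and, when $a$ is a 2-point and $b$ lies on one of the components of $E$ through $a$, first absorbing the nonvanishing $u_i(b)$ into a unit via an appropriate \etale\ coordinate change). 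A direct check shows that $\s$ is again in Weierstrass form at $b$ with the same distinguished variable $v$, the same order $d=\rho(a)+1$, and the same exponents, with $\tT_2$ still a unit and $a_{2,d-1}=0$.

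The key step---and the main obstacle---is to establish $\io(b)<\io(a)$. In these coordinates, $\cG_b$ is the germ at $b$ of the same ideal sheaf on the smooth surface $(v=0)\cap U$ whose germ at $a$ is $\cG_a$, and by Remarks \ref{rem:G} the invariant $\io$ equals the length of the canonical principalization of this sheaf as an $E$-monomial ideal on the surface---a locally finite upper-semicontinuous invariant insensitive to $E$-monomial factors (Remark \ref{rem:G}(2)). I would show that after removing those factors the residual ideal has discrete zero locus on $(v=0)\cap U$, locally equal to $\{a\}$ for $U$ small: this uses the finiteness of $\rho(a)$ (so some partial derivative $\p_{\bv}^{\bg}T_i$ is a unit at $a$) together with the Weierstrass structure to force the residual generators of $\cG_a$ jointly to cut out $\{a\}$ only. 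Shrinking $U$ further, the residual part of $\cG_b$ is then a unit for $b\neq a$, giving $\io(b)=0<\io(a)$ as required.
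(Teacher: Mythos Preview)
The paper's proof is, in its entirety, ``This is clear.'' Your elaboration is essentially correct and supplies the mechanism the authors regard as routine: reduce via upper-semicontinuity of $\rho$ to nearby $b$ with $\rho(b)=\rho(a)$, translate the Weierstrass coordinates from $a$ to $b$, and observe that $\cG_b$ is already principal monomial so that $\io(b)=0$.

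Two small corrections would tighten your write-up. First, you should make explicit that $\rho(b)=\rho(a)$ forces $v(b)=0$: from the Weierstrass form one has $\p_v^{d-1}T_2 = d!\,Uv + O(v^2)$, which is a unit whenever $v(b)\neq 0$, giving $d(b)\le d-1<d(a)$. This is what underlies your claim that the same distinguished variable $v$ works at $b$, and without it the translated coordinates need not be in Weierstrass form. Second, the reason the non-principal-monomial locus of $\cG$ on $(v=0)$ is discrete is simply that $(v=0)$ is two-dimensional and principalization there proceeds by point blow-ups (this is exactly Remarks~\ref{rem:G}(1)); it does not depend on the finiteness of $\rho(a)$ or on special features of the Weierstrass data beyond $\cG$ being a coherent ideal on a surface. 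Your appeal to Lemma~\ref{lem:rhofinite}(2) is also slightly misplaced---that lemma concerns the situation after preliminary blow-ups; the generic-zero statement on a 2-curve through a 2-point with $\rho<\infty$ follows directly from Lemma~\ref{lem:rho}.
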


\begin{proof}
This is clear.
\end{proof}

\subsection{Proof of Lemma \ref{lem:prepnormal}}\label{subset:proofprepnormal}
The lemma will be given by an algorithm presented in three distinct cases, beginning
with $\s$ in Weierstrass form as in Lemma \ref{lem:rho}:
\begin{itemize}
\item $a$ is a 2-point;
\item $a$ is a 1-point with $\ord_a (T_i) = d(a)$ (where $(T_i)$ denotes the ideal generated
by $T_2,\ldots,T_N$ and $\ord_a$ means the order at $a$);
\item $a$ is a 1-point where $\ord_a (T_i) < d(a)$.
\end{itemize}
The third case is the most delicate.

\subsection{Case that $a$ is a 2-point}\label{subsec:2pt}

\begin{lemma}\label{lem:2pt} Let $a\in X$ be a 2-point. Suppose that $\rho(a) > 0$, $\io(a)>0$ and 
$\s$ is in Weierstrass form at $a$ (Lemma \ref{lem:rho}). Let $C$ denote the $2$-curve through $a$.
Then $C \subset (\log \rk \s = 0)$. Let 
$\tau: (\widetilde{X},\widetilde{E}) \to (X,E)$ denote the combinatorial blowing-up with centre $C$.
Then $(\rho(b),\io(b)) < (\rho(a),\io(a))$, for all $b \in \tau^{-1}(a)$.
\end{lemma}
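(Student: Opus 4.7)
My plan begins with the topological claim $C \subset (\log\rk\s = 0)$. By the setup at the start of Section~\ref{sec:outline}, we may assume $\cF_1(\s)$ is already a principal monomial ideal wherever $\log\rk\s > 0$, so $\rho$ vanishes on that locus. Lemma~\ref{lem:rhofinite} ensures $\log\rk\s$ is constant (with value $0$ or $1$) on the 2-curve $C$, and since $\rho(a) > 0$ this forces $\log\rk\s \equiv 0$ on $C$.

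Next I note that $C$, as the transverse intersection of two components of $E$, is smooth, so the blowing-up $\tau$ is combinatorial. By Lemma~\ref{lem:fit}(2), $\cF_k(\s\circ\tau) = \tau^*\cF_k(\s)$ for all $k$, so the principalization of $\cF_0$ and $\cF_2$ is preserved, and by Lemma~\ref{lem:invprops}(5), $\rho(b) \leq \rho(a)$ for every $b \in \tau^{-1}(a)$. If the inequality is strict, we are done; so assume $\rho(b) = \rho(a)$, whence $d(b) = d(a) = d$ by Lemma~\ref{lem:rho}(2). Fix coordinates $(\bu, v) = (u_1, u_2, v)$ at $a$ that achieve $\io(a) = \io(a; \bu)$ and put $\s$ in the Weierstrass form of Lemma~\ref{lem:rho}(3); work in the $u_1$-chart ($u_2$-chart is symmetric), where $u_1 = x_1$, $u_2 = x_1 x_2$, and $v$ is unchanged. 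Since $v$ and the unit $\tT_2$ persist under pullback, the Weierstrass structure in $v$ survives with the same degree $d$, and the normal form at $b$ is restored by absorbing into the $g_i$ any new monomials of $T_i \circ \tau$ that are rational powers of the new exponent $\bal' = (\al_1 + \al_2, \al_2)$, exactly as in the proof of Lemma~\ref{lem:rho}(3).

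The decisive step is the strict decrease of $\io$. Restricted to the smooth surface $(v=0)$, the combinatorial blowing-up $\tau$ is precisely the point blowing-up of $a$ in the two variables $(u_1, u_2)$, which is itself combinatorial since $a$ is a 2-point of $E|_{(v=0)}$. By Lemma~\ref{lem:fit}(2) applied to $\s|_{(v=0)}$, $\cH_b = \tau^*\cH_a$ exactly; the coefficients $a_{ij}$ pull back by direct substitution, so $\cG_b = \tau^*\cG_a$ in the induced coordinates. By the definition of $\io(a;\bu)$ in Remark~\ref{rem:G}(1), after this single point blow-up only $\io(a;\bu) - 1$ further point blow-ups are needed to principalize, so $\io(b; (x_1, x_2, v)) \leq \io(a;\bu) - 1 = \io(a) - 1$; by minimality $\io(b) \leq \io(b; (x_1, x_2, v)) < \io(a)$, giving the lexicographic decrease.

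The main obstacle I anticipate is ensuring that the Weierstrass-form rearrangement at $b$ (the moving-to-$g_i$ step in paragraph two) does not disturb the coefficients $a_{ij}$ of $v^j$ for $0 \leq j \leq d-1$, so that the identification $\cG_b = \tau^*\cG_a$ genuinely holds in the Weierstrass coordinates in which $\io(b)$ is computed. The rearrangement only shifts monomials that are rational powers of $\bu^{\bal'}$ (hence supported in the coefficient of $v^d$ or in the purely-$\bu$ part), but one must verify carefully that the $a_{ij}$ with $j < d$ are genuinely untouched.
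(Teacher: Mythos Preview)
Your approach is the same as the paper's, and the 2-point case (the origins of the $u_1$- and $u_2$-charts) is handled correctly. However, you have not treated the 1-points of $\tau^{-1}(a)$. The fiber $\tau^{-1}(a)$ is a $\mathbb{P}^1$, and only its two chart origins are 2-points of $\tE$; every other point, say $b$ with $x_2 = \eta \neq 0$ in the $u_1$-chart, is a 1-point. At such $b$ the chart coordinates $(x_1,x_2,v)$ are not adapted to $\tE$ (only $(x_1=0)$ passes through $b$), and after recentring with $y = x_2 - \eta$ one has $\tau^*\s_1 = x_1^{\al_1+\al_2}(\eta+y)^{\al_2}$, which is not a pure monomial in the single exceptional variable. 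So your expression ``$\io(b;(x_1,x_2,v))$'' is undefined there, and the form \eqref{eq:prelim} is not yet restored. The paper remedies this with a further coordinate change $u_1 = x(\eta+y)$, $u_2 = x(\eta+y)^{-\al_1/\al_2}$, giving $\tau^*\s_1 = x^{\al_1+\al_2}$, $\tau^*\s_i = \tg_i(x,y) + x^{\de_1+\de_2}\widetilde{U}\,\tau^*T_i$ with $\widetilde{U}$ a unit, and $b_{ij} = \tau^*a_{ij}$; from this one still gets $\cH_b = \tau^*\cH_a$ and $\cG_b = \tau^*\cG_a$, hence $\io(b) < \io(a)$. You need to add this computation.

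Your stated worry about the Weierstrass rearrangement is not an obstacle. Moving monomials that are rational powers of the new $\bu^{\bal'}$ into $g_i$ affects only the coefficient of $v^0$, i.e.\ $a_{i0}$, never the $a_{ij}$ with $1 \leq j \leq d-1$ that enter the product $\prod_{(i,j)\in J} a_{ij}$ (recall $J$ requires $j \geq 1$); and $\cH_a$ is the log Fitting ideal $\cF_0$ of $\s|_{(v=0)}$, an intrinsic invariant independent of how one splits $\s_i|_{(v=0)}$ as $g_i + \bu^{\bde}a_{i0}$. Hence $\cG_b = \tau^*\cG_a$ is unaffected by the rearrangement.
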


\begin{proof}
Consider $\s$ in the Weierstrass form of Lemma \ref{lem:rho} in adapted coordinates $(\bu,v)$, 
where $d=d(a)=\rho(a)+1$ and $\io(a) = \io(a;\bu)$. Since $\rho(a) > 0$, $\log \rk_a \s = 0$.
Then $\log \rk \s = 0$ in a neighbourhood of $a$ in $C$ (by \eqref{eq:prelim}), and therefore on $C$.
 
Let $b\in \tau^{-1}(a)$. If $b$ is a 2-point,
then, without loss of generality, there are adapted coordinates $(\bx,z)=(x_1,x_2,z)$ at $b$, 
with $\tE=(x_1x_2=0)$, in which $\tau$ is given by
$$
u_1=x_1,\quad u_2 = x_1x_2,\quad v=z.
$$
If $b$ is a 1-point, then we can assume that $\al_2 \neq 0$, and there are adapted coordinates $(x,y,z)$ such that
$$
u_1 = x (\eta+y),\quad u_2 = x (\eta+y)^{-\al_1/\al_2},\quad v = z,
$$
where $\eta\neq 0$.
Thus, if $b$ is a 2-point (resp., 1-point), we can write
\begin{equation*}
\begin{aligned}
\tau^*\sigma_1 &= \bx^{\tbal},\\
\tau^*\sigma_i &= \tg_i(\bx)  + \bx^{\tbde} \tau^*T_i,
\end{aligned}
\qquad\quad \text{resp.,}\qquad\quad  
\begin{aligned}
\tau^*\sigma_1 &= x^{\tal},\\
\tau^*\sigma_i &= \tg_i(x,y)  + x^{\tde} \widetilde{U} \tau^*T_i,
\end{aligned}
\end{equation*}
$i=2,\ldots,N$, where $\tbal = (\al_1+\al_2,\al_2)$, $\tbde=(\de_1+\de_2,\de_2)$ (resp.,
$\tal= \al_1+\al_2$, $\tde=\de_1+\de_2$, and $\widetilde{U} = (\eta+y)^{\de_1 - \de_2 \al_1 / \al_2}$ is a unit).

In either case,
$$
 \tau^*T_i = z^d \tau^*\tT_i + \sum^{d-1}_{j=0} b_{ij} z^j,\quad i=2,\ldots,N,
$$
where $b_{ij}= \tau^*a_{ij}$ and $b_{ij} = b_{ij}(\bx)$ (resp., $b_{ij}= b_{ij}(x,y)$). Clearly, in
either case, $\rho(b)\leq \rho(a)$. 

Moreover, in either case it follows from Lemma \ref{lem:fit}(1) that $\cH_b = \tau^*\cH_a$, 
so that $\cG_b = \tau^*\cG_a$; therefore, $\io(b) < \io(a)$ and then $(\rho(b), \io(b)) < (\rho(a), \io(a))$, by
the definition of $\io$ (see Remarks \ref{rem:G}).
\end{proof}

\begin{remark}\label{rem:2pt} Lemma \ref{lem:prepnormal} in the case that $a$ is a 2-point thus
follows directly from resolution of singularities of the ideal $\cG_a$. If $a$ is a 1-point, then blowing-up
$(u=w=0)$ (with respect to adapted coordinates as in Lemma \ref{lem:rho}) likewise
gives $(\rho(b), \io(b)) < (\rho(a), \io(a))$, for $b\in \tau^{-1}(a)$. This can be used to prove a
local version of Lemma \ref{lem:prepnormal}, but the centre $(u=w=0)$ need not have a global meaning
in $X$. The challenge in \S\S\ref{subsec:1ptd},\,\ref{subsec:1pt<d} is to decrease the value of the
invariant $(\rho,\io)$ by \emph{global} blowings-up only.
\end{remark}

\subsection{Case that $a$ is a 1-point with $\ord_a (T_i) = d(a)$}\label{subsec:1ptd}

\begin{lemma}\label{lem:1pt1} Let $a\in X$ be a 1-point. Suppose that $\rho(a) > 0$, $\io(a)>0$, and 
$\s$ is in Weierstrass form at $a$ (Lemma \ref{lem:rho}), in adapted coordinates $(u,v,w)$, 
where $\ord_a(T_i)=d(a)=\rho(a)+1$ and $\io(a) = \io(a;(u,w))$. Let 
$\tau: (\widetilde{X},\widetilde{E}) \to (X,E)$ denote the blowing-up with centre 
$a$. Then $(\rho(b),\io(b)) < (\rho(a),\io(a))$, for all $b \in \tau^{-1}(a)$.
\end{lemma}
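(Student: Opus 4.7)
The plan is to use the point blow-up $\tau$ with centre $\{a\}$, analyze $\tau^{-1}(a)$ in the three standard affine charts, and show that the lexicographic pair $(\rho(b),\io(b))$ strictly drops below $(\rho(a),\io(a))$ at every $b\in\tau^{-1}(a)$. First, I work in the Weierstrass coordinates of Lemma \ref{lem:rho}, with $\s_1=u^\al$ and $\s_i=g_i(u)+u^\de T_i(u,v,w)$, where $T_2=Uv^d+\sum_{j=0}^{d-2}a_{2j}(u,w)v^j$ has $a_{2,d-1}=0$, chosen so that $\io(a)=\io(a;(u,w))$. The defining hypothesis $\ord_a(T_i)=d$ is the critical input: it ensures that pulling back by $\tau$ produces a factor of $x^d$, $y^d$, or $z^d$ (respectively) in the three charts $(u,v,w)=(x,xy,xz)$, $(xy,y,yz)$, $(xz,yz,z)$, so that $\tau^*T_i$ has the form $x^d P_i$, $y^d R_i$, $z^d Q_i$ with $P_i,R_i,Q_i$ analytic.

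Next, I analyze each chart. In the $v$-chart the origin is a $2$-point with $\tE=(xy=0)$; one computes that $R_2(0,0,0)=U(0,0,0)$ (the condition $a_{2,d-1}=0$ together with $j\le d-2$ kills any other contribution to the constant term), so $R_2$ is a unit at the origin, placing the pullback already in Weierstrass form with $\tbal=(\al,\al)$, $\tbde=(\de,\de+d)$ (linearly independent of $\tbal$ since $d>0$) and $\tT_2$ a unit, hence $\rho=0$. The same argument at non-origin points of $\tau^{-1}(a)$ in the $v$-chart, and the symmetric analysis in the $w$-chart, give $\rho(b)=0$ throughout these two charts. In the $u$-chart every $b\in\tau^{-1}(a)$ is a $1$-point with new exceptional coordinate $x$; the residual $P_2|_{x=0}$ is a homogeneous polynomial of degree $d$ in $(y,z)$ whose $y^d$ coefficient is $U(0,0,0)$. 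At any $b=(0,y_0,z_0)$ with $(y_0,z_0)\ne(0,0)$, either some $P_i$ is a unit at $b$ or $P_2$ has a nonvanishing first-order partial derivative at $b$ (transverse to the zero locus of $P_2|_{x=0}$, along the line through $(y_0,z_0)$), so $d(b)\le 1$ and $\rho(b)=0$. The only possibly non-strict case is the origin $b_0=(0,0,0)$, where the presence of $U(0,0,0)y^d$ in $P_2$ gives $d(b_0)\le d$, hence $\rho(b_0)\le\rho(a)$.

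The remaining task is to show $\io(b_0)<\io(a)$ at this critical point. For this, I observe that the restriction of $\tau$ to the strict transform of the smooth hypersurface $(v=0)$ in the $u$-chart coincides with the blow-up of $(v=0)$ (with adapted coordinates $(u,w)$) at its origin $a$, realized in that blow-up's $u$-chart. Applying Lemma \ref{lem:fit}(1) to this $2$-dimensional admissible blow-up yields $\cH_{b_0}=\exc\cdot\tau^*\cH_a$, and since each coefficient $a_{ij}$ pulls back directly under $\tau$, the same relation holds for $\cG_{b_0}$. After a Weierstrass preparation change of variables $y\mapsto y'$ in $(y,z)$ to normalize the $P_2$ at $b_0$ (which does not disturb the $(x,z)$-subsystem), the coordinates $(x,z)$ play the role of $(u,w)$ at $b_0$. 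Since multiplication by exceptional monomials does not affect $\io$ (Remarks \ref{rem:G}), one obtains $\io(b_0;(x,z))=\io(a;(u,w))-1=\io(a)-1$, and therefore $\io(b_0)\le\io(b_0;(x,z))<\io(a)$.

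The main obstacle I anticipate is the bookkeeping in this last step: verifying cleanly that $\cG$ pulls back as claimed (so that the first step of the resolution-of-singularities algorithm defining $\io(a;(u,w))$ is genuinely realized by $\tau$), and confirming that the Weierstrass preparation at $b_0$ in the $u$-chart can be carried out without disturbing the $(x,z)$-coordinates that encode the restriction to the strict transform of $(v=0)$. A secondary delicate point is handling the 2-points in the $v$- and $w$-charts: although the claim there is simply $\rho(b)=0$, one must verify carefully that the new exponents $(\de,\de+d)$ satisfy the $\IQ$-linear independence condition of Lemma \ref{lem:rho}(1) with respect to $\tbal=(\al,\al)$, which holds because $d>0$.
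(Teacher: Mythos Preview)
Your overall strategy matches the paper's, and the $\io$-decreasing argument you give at the $u$-chart origin is correct. The gap is that you apply it at only that one point, whereas it is needed along the entire $\mathbb P^1$ where the strict transform of $(v=0)$ meets $\tau^{-1}(a)$. Your claim in the $u$-chart that $d(b)\le 1$ whenever $(y_0,z_0)\ne(0,0)$, and your claim that $\rho(b)=0$ throughout the $w$-chart, are both false. For a counterexample take $d=2$, $T_2=v^2$ and $T_3=3u^2w+3uw^2+w^3$ (so $\ord_a(T_i)=d$ and $\io(a)>0$, since $\cH_a$ is generated by $u^{\al+\de}(u+w)^2$): in the $u$-chart $P_2=y^2$ and $P_3=x(3z+3z^2+z^3)$, so $P_3|_{x=0}=0$ and at every $b=(0,0,z_0)$ both first $(y,z)$-partials of all $P_i$ vanish, giving $d(b)=2=d(a)$. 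Note also that $P_2|_{x=0}$ is not homogeneous in $(y,z)$; it is the dehomogenization at $u=1$ of the degree-$d$ form of $T_2$ in $(u,v,w)$, so the transversality heuristic you invoke has no basis.

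The paper's dichotomy is on the single coordinate $\nu=y_0$ (the $\tilde v$-value), not on $(y_0,z_0)$. If $\nu\ne 0$, then $\p_{\tilde v}^{\,d-1}(\tau^*T_2/x^d)=d!\,\nu\,U(0)+O(x)$, a unit, so $d(b)\le d-1$. If $\nu=0$, i.e.\ $b$ lies on the strict transform of $(v=0)$ (the whole line $\{y=0\}$ in the $u$-chart \emph{and} the origin of the $w$-chart), one only obtains $d(b)\le d(a)$, and the $\io$-argument must be invoked at each such $b$: the restriction of $\tau$ to the strict transform of $(v=0)$ is exactly the blow-up of the $(u,w)$-plane at its origin, so by Lemma~\ref{lem:fit}(1) one has $\cH_b=\exc\cdot\tau^*\cH_a$, hence $\cG_b$ differs from $\tau^*\cG_a$ by an exceptional monomial and $\io(b)<\io(a)$ at every such point, not just at $b_0$.
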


\begin{proof} We again write the components of $\s$ using the notation of Lemma \ref{lem:rho}.
We consider three cases, depending on the coordinate chart of $X$ containing $b$.

\medskip\noindent
\emph{Case I. The point $b$ belongs to the $u$-chart.} This chart has adapted coordinates $(x,\tv,\tw)$ in which
$\tau$ is given by 
$$
u=x,\quad v = x\tv,\quad w=x\tw,
$$
and $b \in (x=0)$; say, $b=(0,\nu,\om)$. Since $\ord_a (T_i) = d$,
\begin{align*}
\tau^*\s_1 &= x^\al,\\
\tau^*\s_i &= g_i(x) + x^{\de+d}\cdot\frac{\tau^*T_i}{x^d},\quad i=2,\ldots,N,
\end{align*}
and each
\begin{equation}\label{eq:u}
\frac{\tau^*T_i}{x^d} = \tv^d \tau^*\tT_i + \sum^{d-1}_{j=0} b_{ij}(x,\tw) \tv^j,
\end{equation}
where each $b_{ij} = \tau^*a_{ij}/x^{d-j}$. It is clear from \eqref{eq:u} that, if $\nu\neq 0$, then 
$d(b) \leq d(a)-1 < d(a)$ (recall that $\tT_2 = U$ is a unit). Assume that $\nu =0$. 
Then $d(b)\leq d(a)$, by \eqref{eq:u}.
Moreover, it follows from Lemma \ref{lem:fit}(1) that $\cH_b = x\cdot\tau^*\cH_a$, 
so that $\cG_b = x^q\cdot\tau^*\cG_a$,
where $q = 1 - \sum_{(i,j)\in J}(d-j)$; therefore, $\io(b) < \io(a)$ and $(\rho(b), \io(b)) < (\rho(a), \io(a))$.

\medskip\noindent
\emph{Case II. The point $b$ belongs to the $w$-chart, but not to the $u$-chart.} The $w$-chart has
adapted coordinates $(\tu,x,\tv)$, where $\tE = (\tu=x=0)$, in which
$$
u = x\tu,\quad v= x\tv,\quad w=x,
$$
and $b=(0,0,\nu)$. Similarly to Case I, 
\begin{align*}
\tau^*\s_1 &= x^\al\tu^\al,\\
\tau^*\s_i &= g_i(x\tu) + \tu^\de x^{\de+d} \cdot\frac{\tau^*T_i}{x^d},\quad i=2,\ldots,N,
\end{align*}
and
$$
\frac{\tau^*T_i}{x^d} = \tv^d \tau^*\tT_i + \sum^{d-1}_{j=0} b_{ij}(\tu,x) \tv^j,
$$
where $b_{ij} = \tau^*a_{ij}/x^{d-j}$. If $\nu\neq 0$, then $d(b) \leq d(a)-1 < d(a)$. Assume $\nu=0$.
Then $d(b)\leq d(a)$, and $\cH_b = x\cdot\tau^*\cH_a$, by Lemma \ref{lem:fit}(1). Again,
$\cG_b = x^q\cdot\tau^*\cG_a$, with $q$ as in Case I, and $(\rho(b), \io(b)) < (\rho(a), \io(a))$.

\medskip\noindent
\emph{Case III. The point $b$ belongs to the $v$-chart, but not to the $u$- or $w$-charts.} The $v$-chart
has adapted coordinates $(\tu,x,\tw)$, where $\tE = (\tu=x=0)$, in which
$$
u = x\tu,\quad v= x,\quad w=x\tw,
$$
and $b=(0,0,0)$. In the same way as before,
\begin{align*}
\tau^*\s_1 &= x^\al \tu^\al\\
\tau^*\s_i &= g_i(x\tu) + \tu^\de x^{\de+d} \cdot\frac{\tau^*T_i}{x^d},\quad i=2,\ldots,N,
\end{align*}
and
$$
\frac{\tau^*T_i}{x^d} = \tau^*\tT_i + \sum^{d-1}_{j=0} b_{ij}(\tu,x,\tw),
$$
where each $b_{ij}(\tu,x,\tw) = a_{ij}(x\tu,x\tw)/x^{d-j}$; in particular, all $b_{ij}(0,0,0) = 0$.
Hence $\tau^*T_2/x^d$ is a unit at $b$ and, moreover, $(\de,\de+d)$ is linearly independent of $(\al,\al)$
(since $d=d(a)\neq 0$). Therefore, $d(b) = 0$, so that $\rho(b) = 0$.
\end{proof}

\subsection{Case that $a$ is a 1-point with $\ord_a (T_i) < d(a)$}\label{subsec:1pt<d}
Let $a\in X$ be a 1-point. Suppose that $\rho(a) > 0$, $\io(a)>0$, and 
$\s$ is in Weierstrass form at $a$ (Lemma \ref{lem:rho}), in adapted coordinates $(u,v,w)$, 
where $\ord_a(T_i) < d(a)=\rho(a)+1$ and $\io(a) = \io(a;(u,w))$. Set $\mu:= \ord_a(T_i)$.
We rewrite \eqref{eq:weier} as
\begin{equation}\label{eq:weiermu}
T_i = \sum_{k=\mu}^{d-1} u^{\al_{ik}} P_{ik}(u,v,w) + \left(\tT_i v^d  + \sum^{d-1}_{j=0} c_{ij}(u,w) v^j\right),
\quad i=2,\ldots,N,
\end{equation}
where $c_{2,d-1}=0$, $\ord_a c_{ij}\geq d-j$, for all $i,\,j$, and $u^{\al_{ik}} P_{ik}$ is a homogeneous polynomial
of degree $k$ such that either $P_{ik}=0$ or $P_{ik}(0,v,w) \neq 0$ (i.e., $P_{ik}$ is not divisible by $u$),
for all $i,\,k$.

\begin{remark}\label{rem:alpha}
For all $i,\,k$ such that $P_{ik}\neq 0$, we have $0 < \al_{ik} < k$. The left-hand inequality is clear from the definition
of $\rho(a)$. On the other hand, if  $\al_{ik} = k$, then $u^{\al_{ik}} P_{ik}(u,v,w) = u^k P_{ik}(0,0,0)$ is a monomial
of degree $< d$; since $\al_{ik}$ is trivially linearly dependent on $\al$, this would contradict the definition of 
Weierstrass form.
\end{remark}

Let $\fm_{X,a}$ denote the maximal ideal of $\cO_{X,a}$ and let $I := \{(i,k): P_{ik}\neq 0\}$. Let $\cJ$ denote
the ideal
$$
\cJ := \fm_{X,a}^{d} + \sum_{(i,k) \in I} u^{\alpha_{ik}} \fm_{X,a}^{k-\alpha_{ik}}.
$$
Clearly, $V(\cJ) = \{a\}$.

\begin{lemma}\label{lem:1pt2}

Let $a\in X$ be a 1-point. Suppose that $\rho(a) > 0$, $\io(a)>0$, and 
$\s$ is in Weierstrass form at $a$ (Lemma \ref{lem:rho}), in adapted coordinates $(u,v,w)$, 
where $\mu = \ord_a(T_i)<d(a)=\rho(a)+1$ and $\io(a) = \io(a;(u,w))$. Then there is a morphism
$\tau: (\widetilde{X},\widetilde{E}) \to (X,E)$ given by a sequence of admissible blowings-up that
principalizes $\cJ$, such that
\begin{enumerate}
\item $\tau$ is an isomorphism over $X\setminus \{a\}$;
\item $(\rho(b),\io(b)) < (\rho(a),\io(a))$, for all $b \in \tau^{-1}(a)$.
\end{enumerate}
\end{lemma}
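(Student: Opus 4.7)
The plan is to apply Hironaka-style principalization of the ideal $\cJ$ to produce $\tau$, and then verify the two conclusions separately. Since $V(\cJ) = \{a\}$, every centre appearing in a standard principalization of $\cJ$ by smooth admissible blowings-up must be contained in the successive preimages of $a$. Hence $\tau$ is an isomorphism over $X\setminus\{a\}$, giving conclusion (1). Admissibility with respect to $E$ is automatic from the construction, since the only component of $E$ passing through $a$ is $(u=0)$ and the centres lie in its preimage.

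The substance of the proof lies in (2): the lexicographic drop of $(\rho,\io)$ at every $b \in \tau^{-1}(a)$. Fix adapted local coordinates at $b$; by construction, $\tau^*\cJ$ is locally principal, generated by a monomial $M_b$ in components of $\widetilde{E}$ at $b$. Pulling back the Weierstrass presentation \eqref{eq:weiermu} and using that $M_b$ divides $\tau^*(u^{\al_{ik}}P_{ik})$ for all $(i,k)\in I$ and $\tau^*(\fm_{X,a}^d)$, we extract a common monomial factor $M_b$ from the non-$g$-part of $\tau^*\s_i$. After absorbing $M_b$ into a new $\tbu^{\tbde}$ (and, if necessary, performing a generic linear change in the coordinates transverse to $\supp \widetilde{E}$ followed by Weierstrass preparation in a new distinguished variable), we place $\tau^*\s$ back in Weierstrass form at $b$, with new data $(\widetilde T_i,\widetilde d)$.

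The comparison then proceeds by a case split on which summand of $\cJ$ is responsible for generating $\tau^*\cJ$ at $b$. In the first case, $M_b$ is the pullback (up to units) of some term $u^{\al_{ik}}\fm_{X,a}^{k-\al_{ik}}$ with $k<d$; after dividing out this monomial from $\tau^*T_i$, the surviving polynomial contains a unit of order $\le k-\al_{ik}<d-\al_{ik}\le d$ in some transverse direction, so $d(b)\le k<d(a)$ and hence $\rho(b)<\rho(a)$. In the second case, $M_b$ comes from the $\fm_{X,a}^d$ summand; here $d(b)\le d(a)$, and if the inequality is strict we conclude as before. If $d(b)=d(a)$, then the principalization forces every pulled-back coefficient $\tau^*c_{ij}$ and every $\tau^*(u^{\al_{ik}}P_{ik})$ to carry a common exceptional factor; on the slice $(v=0)$ at $b$, a computation analogous to the one performed in Lemmas \ref{lem:2pt} and \ref{lem:1pt1} shows that $\cH_b$ (and hence $\cG_b$) equals an exceptional monomial times $\tau^*\cH_a$ (resp.\ $\tau^*\cG_a$) with a strictly nontrivial extra factor compared with the identity, so $\io(b)<\io(a)$ by Remarks \ref{rem:G} and the definition of $\io$.

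The main obstacle is the borderline subcase of the second case, where $d(b)=d(a)$ and one must extract the drop in $\io$. This is precisely why $\cJ$ was chosen to include both the pure $\fm_{X,a}^d$ ideal and the weighted terms $u^{\al_{ik}}\fm_{X,a}^{k-\al_{ik}}$: the first controls the Weierstrass polynomial part and the second controls the low-order tails, and together they guarantee that principalization always forces either $d$ to drop or $\cG$ to acquire a nontrivial exceptional factor on the two-dimensional slice $(v=0)$. Properly tracking the exceptional components created by the principalization, the identifications of the new adapted coordinates at $b$, and the effect on $\cH_b$ when $b$ is a 2-point versus a 1-point is where the bookkeeping becomes delicate; the key input is Lemma \ref{lem:fit}(1) applied stepwise through the blowings-up, together with Remark \ref{rem:alpha} which ensures $\al_{ik}<k$ so that each summand in $\cJ$ genuinely contributes.
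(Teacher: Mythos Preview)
Your outline has the right shape but contains a genuine gap and glosses over the hardest part of the argument.

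First, the paper does not use an unspecified ``Hironaka-style'' principalization of $\cJ$. It performs one explicit step: blow up the point $a$ (call this $\tau_1$), observe that in each chart $\tau_1^*\cJ$ becomes a \emph{monomial} ideal in two exceptional variables (or is already principal in the $u$-chart), and then principalize by purely combinatorial blowings-up $\tau_2$. This concrete structure is what makes the subsequent chart-by-chart analysis tractable; in particular, it guarantees that on the strict transform of $(v=0)$ the morphism $\tau$ restricts to a sequence of point blowups of a surface, which is exactly what is needed to invoke Lemma~\ref{lem:fit}(1) for the $\cH_b = (\text{monomial})\cdot \tau^*\cH_a$ identity in your $\io$-drop case. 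A generic principalization need not have this property.

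Second, and more seriously, your ``first case'' argument is too quick. You claim that if $M_b$ comes from a term $u^{\al_{ik}}\fm^{k-\al_{ik}}$ then the surviving polynomial has a unit at order $\le k-\al_{ik}$, hence $d(b)<d$. But at a $1$-point $b$ of $\widetilde E$, \emph{several} pairs $(i,k)$ can simultaneously give generators of $\tau^*\cJ$ (since monomials in one variable are totally ordered), and their contributions can cancel. This is exactly what the paper handles in Subcases II.2.2 and III.2.2: it introduces the set $\Lambda$ of simultaneously-generating pairs, proves via a linear-independence computation (equation \eqref{eq:pairs}) that the exponents $k-\al_{ik}$ for $k\in\Lambda_i$ are pairwise distinct, and then runs a counting argument (splitting on $j_0=0$, $0<j_0\le\mu$, $\mu<j_0$) to bound $d(b)$ by $j_0+l_0<d$. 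This is the technical heart of the lemma, and your sketch does not account for it.

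In short: your two-case dichotomy is correct in spirit, but the control of $d(b)$ in the first case requires a nontrivial no-cancellation argument that you have omitted, and the $\io$-drop in the second case depends on the specific combinatorial structure of the principalization rather than on a black-box appeal to Lemma~\ref{lem:fit}.
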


\begin{proof} Let $\tau_1: (X_1,E_1)\to  (X,E)$ denote the blowing-up with centre the point $a$, and let
$\tau_2: (\widetilde{X},\widetilde{E}) \to (X_1,E_1)$ denote a morphism given by admissible blowings-up
that principalizes $\tau_1^*\cJ$. Set $\tau=\tau_1\circ\tau_2$. Clearly, $\tau$ is an isomorphism over 
$X\setminus \{a\}$. Let $b\in \tau^{-1}(a)$. 

We write the components of $\s$ using the notation of \eqref{eq:weiermu}, and again consider three cases,
depending on the coordinate chart containing $\tau_2(b)$.

\medskip\noindent
\emph{Case I. The point $\tau_2(b)$ belongs to the $u$-chart of $\tau_1$.}
This chart has adapted coordinates $(x,\tv,\tw)$ in which $\tau_1$ is given by 
$$
u=x,\quad v = x\tv,\quad w=x\tw,
$$
and $\tau_2(b) \in (x=0)$. It follows that $\tau_1^*\cJ$ is a principal monomial ideal in this chart, 
so that $\tau_2$ is an isomorphism over the chart. Since $\ord_a (T_i)\geq \mu$,
\begin{align*}
\tau_1^*\s_1 &= x^\al,\\
\tau_1^*\s_i &= g_i(x) + x^{\de+\mu}\cdot\frac{\tau_1^*T_i}{x^\mu},\quad i=2,\ldots,N,
\end{align*}
and each
\begin{equation}\label{eq:u2}
\frac{\tau_1^*T_i}{x^\mu} = \sum_{k=\mu}^{d-1} x^{k-\mu}Q_{ik}(1,\tv,\tw) 
+ x^{d-\mu} \left(\tv^d \tau_1^*\tT_i + \sum^{d-1}_{j=0} d_{ij}(x,\tw) \tv^j\right),
\end{equation}
where each $Q_{ik} = \tau_1^*P_{ik}/x^{k-\al_{ik}}$ and each $d_{ij} = \tau_1^*c_{ij}/x^{d-j}$. 
Since $\mu < d$, there exists $i_0$ such that $Q_{i_0,\mu}\neq 0$, and
$$
\frac{\tau_1^*T_{i_0}}{x^\mu} = Q_{i_0,\mu}(1,\tv,\tw) + x R(x,\tv,\tw),
$$
for some $R$. By Remark \ref{rem:alpha}, $Q_{i_0,\mu}$ is a non constant polynomial of
degree $< \mu$. Therefore, $d(\tau_2(b)) \leq \deg\, Q_{i_0,\mu} < d(a)$.

\medskip\noindent
\emph{Case II. The point $\tau_2(b)$ belongs to the $w$-chart, but not to the $u$-chart.} The $w$-chart has
adapted coordinates $(\tu,\tw,\tv)$, where $\tE = (\tu=\tw=0)$, in which
$$
u = \tu\tw,\quad v= \tv\tw,\quad w=\tw,
$$
and $\tau_2(b)\in V(\tu,\tw)$. Similarly to Case I, 
\begin{equation}\label{eq:w2}
\begin{aligned}
\tau_1^*\s_1 &= \tu^\al\tw^\al,\\
\tau_1^*\s_i &= g_i(\tu\tw) + \tu^\de \tw^{\de+\mu} \cdot\frac{\tau_1^*T_i}{\tw^\mu},\quad i=2,\ldots,N,
\end{aligned}
\end{equation}
and each

\begin{equation}\label{eq:w2a}
\frac{\tau_1^*T_i}{\tw^\mu} = \sum_{k=\mu}^{d-1} \tu^{\al_{ik}}\tw^{k-\mu}Q_{ik}(\tu,\tv) 
+ \tw^{d-\mu} \left(\tv^d \tau_1^*\tT_i + \sum^{d-1}_{j=0} d_{ij}(\tu,\tw) \tv^j\right),
\end{equation}
where each $Q_{ik} = \tau_1^*P_{ik}/\tw^{k-\al_{ik}}$ and each $d_{ij} = \tau_1^*c_{ij}/\tw^{d-j}$. 
Then, for each $i,k$, either $Q_{ik}=0$ or there exists $j_{ik} < k$ such that:
\begin{equation}\label{eq:w2expansion}
Q_{ik}(\tu,\tv) = \tv^{j_{ik}}Q_{i,j_{ik},k} + \sum_{j=0}^{j_{ik}-1}\tv^jQ_{ijk} +\tu R_{ik}(\tu,\tv)  
\end{equation}
where $Q_{ij_{ik}k}$ is a nonzero constant.

Moreover, $\tau_1^*\cJ$ is the ideal
$$
\tau_1^*\cJ = \tw^{\mu}\cdot\left(\tw^{d-\mu};\, \tu^{\al_{ik}}\tw^{k-\mu},\, (i,k) \in I\right),
$$
and principalization of $\tau_1^*\cJ$ is equivalent to principalization of $\cK := \tw^{-\mu}\cdot \tau_1^*\cJ$.
Since $\tau_1^*\cJ$ is generated by finitely many exceptional monomials in two variables,
$\tau_2: (\tX,\tE) \to (X_1,E_1)$ is a composite of combinatorial blowings-up, over the $w$-chart. 
We consider two subcases, depending on whether $b$ is a 2-point or a 1-point; each of these
subcases will be divided into further subcases, depending on which generator of $\cK$
pulls back to a generator of the principal ideal $\tau_2^*\cK$.

\medskip\noindent
\emph{Subcase II.1. The point $b$ is a 2-point.} There are adapted coordinates $(\bx,z) = (x_1,x_2,z)$
centred at $b$ such that $\tE = (x_1x_2=0)$ and
$$
\tu = \bx^{\bla_{1}},\quad \tw = \bx^{\bla_{2}},\quad \tv = \zeta + z,
$$
where $\bla_{1}, \bla_{2}$ are $\IQ$-linearly independent. By \eqref{eq:w2},
\begin{equation}\label{eq:w2.1}
\begin{aligned}
\tau^*\s_1 &= \bx^{\al(\bla_{1}+\bla_{2})},\\
\tau^*\s_i &= \tg_i(\bx) + \bx^{\de\bla_{1} + (\de+\mu)\bla_{2}} \cdot\frac{\tau^*T_i}{\bx^{\mu\bla_{2}}},\quad i=2,\ldots,N.
\end{aligned}
\end{equation}

\medskip\noindent
\emph{Subcase II.1.1. The ideal $\tau_2^*\cK$ is generated by $\tau_2^*(\tw^{d-\mu})
= \bx^{\bbe}$, where $\bbe = (d-\mu)\bla_{2}$.}
Then
$$
\frac{\tau^*T_i}{\bx^{d\bla_{2}}} = (\zeta + z)^d\tau^*\tT_i + \sum_{j=0}^{d-1}b_{ij}(\bx)(\zeta+z)^j, \quad i=2,\ldots,N,
$$
where $b_{ij} = \tau^*a_{ij}/\bx^{(d-j)\bla_{2}}$.

Clearly, if $\zeta\neq0$, then $d(b) \leq d-1 < d(a)$. Suppose that $\zeta = 0$. Then $d(b) \leq d(a)$. Moreover,
by Lemma \ref{lem:fit}(1),\,(2), $\cH_b = \bx^{\bla_{2}}\cdot\tau^*\cH_a$, so that 
$$
\cG_b = \bx^{\bla_{2}}\prod_{(i,j)\in J}\frac{1}{\bx^{\bbe+(\mu-j)\bla_{2}}}\cdot\tau^*\cG_a;
$$
therefore, $\io(b) < \io(a)$ and $(\rho(b),\io(b))<(\rho(a),\io(a))$.

\medskip\noindent
\emph{Subcase II.1.2. There exists $i_0$ and $k_0$ (where $\mu \leq k_0 < d$) such that
$\tau_2^*\cK$ is generated by $\tau_2^*(\tu^{\al_{i_0, k_0}}\tw^{k_0-\mu}) = \bx^{\bbe}$, where
$\bbe = \al_{i_0, k_0}\bla_{1}+(k_0-\mu)\bla_{2}$.} By \eqref{eq:w2a},
$$
\frac{\tau^*T_{i_0}}{\bx^{\bbe+\mu\bla_{2}}} = \tQ_{i_0,k_0}(\bx,z) + \tR(\bx,z),
$$
where $\tQ_{i_0,k_0} = \tau_2^*Q_{i_0,k_0}$ and $\tR(0,z) = 0$. By \eqref{eq:w2expansion},
$$
\tQ_{i_0,k_0} = z^{j_0}Q_{i_0, j_0, k_0} + \sum_{j=0}^{j_0-1} z^j Q_{i_0, j, k_0} + \bx^{\bla_{1}}R_{i_0,k_0}(\bx,z),
$$
where $j_0 = j_{i_0, k_0}$. If $j_0 > 0$, then $d(b) \leq j_0 < k_0 < d(a)$. On the other hand, if $j_0 = 0$, then,
by \eqref{eq:w2.1},
$$
\tau^*\s_{i_0} = \tg_{i_0}(\bx) + \bx^{(\de+\al_{i_0,k_0})\bla_{1} + (\de+k_0)\bla_{2}} \cdot
\frac{\tau^*T_{i_0}}{\bx^{\bbe+\mu\bla_{2}}};
$$
since $\al_{i_0,k_0} < k_0$ and $\bla_{1},\, \bla_{2}$ are linearly independent, we conclude
that $d(b) = 0 < d(a)$. This completes Subcase II.1.

\medskip\noindent
\emph{Subcase II.2. The point $b$ is a 1-point.} There are adapted coordinates $(x,y,z)$ centred at $b$ such that
$\tE=(x=0)$ and
$$
\tu = x^{\la_1}(\eta+y)^{-1},\quad \tw = x^{\la_2}(\eta+y),\quad \tv = \zeta+ z,
$$
where $\eta\neq 0$. By \eqref{eq:w2},
\begin{equation}\label{eq:w2.2}
\begin{aligned}
\tau^*\s_1 &= x^{\al(\la_1+\la_2)},\\
\tau^*\s_i &= \tg_i(x,y) + x^{\de\la_1 + (\de+\mu)\la_2} \cdot\frac{\tau^*T_i}{x^{\mu\la_2}},\quad i=2,\ldots,N.
\end{aligned}
\end{equation}

\medskip\noindent
\emph{Subcase II.2.1. The ideal $\tau_2^*\cK$ is generated by $\tau_2^*(\tw^{d-\mu}) = 
x^{(d-\mu)\la_2}(\eta+y)^{d-\mu}$.} Then
$$
\frac{\tau^*T_i}{x^{d\la_2}} = (\zeta + z)^d(\eta + y)^d\tau^*\tT_i + \sum_{j=0}^{d-1}b_{ij}(x,y)(\zeta+z)^j, \quad i=2,\ldots,N,
$$
where $b_{ij} = \tau_2^*(\tw^ja_{ij})/x^{d\la_2}$. Since $\eta\neq 0$, it is clear that, if $\zeta\neq 0$, then
$d(b) \leq d-1 < d(a)$. Suppose that $\zeta = 0$. Then $d(b) \leq d(a)$. As above,
$\cH_b = x^{\la_2}\cdot\tau^*\cH_a$, and
$$
\cG_b = x^{\la_2}\prod_{(i,j)\in J}\frac{1}{x^{\be+(\mu-j)\la_2}}\cdot\tau^*\cG_a,
$$
so that $(\rho(b),\io(b))<(\rho(a),\io(a))$.

\medskip\noindent
\emph{Subcase II.2.2. The ideal $\tau_2^*\cK$ is not generated by $\tau_2^*(\tw^{d-\mu})$.} Then there
exists $(i,k) \in I$ such that $\tau_2^*(\tu^{\al_{ik}}\tw^{k-\mu})$ generates $\tau_2^*\cK$. Set
$$
\La := \{(i,k)\in I: \tau_2^*(\tu^{\al_{ik}}\tw^{k-\mu}) \text{ generates } \tau_2^*\cK\}.
$$
Note that, if $(i,k_1),(i,k_2) \in \La$, then $\la_1\al_{i,k_1} + \la_2(k_1-\mu) = \la_1\al_{i,k_2} + \la_2(k_2-\mu) = \be$, say,
so that
\begin{equation}\label{eq:pairs}
\al_{i,k_1} = \al_{i,k_2} + (k_2 - k_1)\frac{\la_2}{\la_1}.
\end{equation}
From \eqref{eq:w2.2}, we rewrite
\begin{equation*}
\tau^*\s_i = \tg_i(x,y) + x^{\de\la_1 + (\de+\mu)\la_2 + \be} \cdot\frac{\tau^*T_i}{x^{\mu\la_2+\be}}.
\end{equation*}
For each $i$, let $\La_i := \{k: (i,k)\in  \La\}$. By \eqref{eq:w2a},
\begin{equation}\label{eq:w2avar}
\frac{\tau^*T_i}{x^{\mu\la_2+\be}} = \sum_{k\in \La_i} (\eta+y)^{k-\al_{ik}}(\tau_2^*Q_{ik})(x,z) + xR_i(x,y,z).
\end{equation}
We claim that, for each $i$, the exponents $k-\al_{ik}$ in \eqref{eq:w2avar} are all distinct. Indeed, if
$k_1-\al_{i,k_1} = k_2-\al_{i,k_2}$, where $(i,k_1),(i,k_2) \in \La$, $k_1\neq k_2$, then $\la_1=-\la_2$,
by \eqref{eq:pairs}; a contradiction since $\la_1,\la_2>0$.

By \eqref{eq:w2expansion}, for each $(i,k) \in \La$,
$$
(\tau_2^*Q_{ik})(x,z) = (\zeta+z)^{j_{ik}}Q_{i,j_{ik},k} + \sum_{j=0}^{j_{ik}-1} (\zeta + z)^j Q_{ijk} + xR_{ik}(x,y),
$$
where $Q_{i,j_{ik},k} \neq 0$. Set $j_0 := \max\{j_{ik}: (i,k) \in \La\}$ and
$$
\Ga := \{(i,k) \in \La: j_{ik} = j_0\}.
$$
We can rewrite \eqref{eq:w2avar} as
\begin{equation}\label{eq:w2avar1}
\frac{\tau^*T_i}{x^{\mu\la_2+\be}} = \sum_{k\in \Ga_i} z^{j_0} (\eta+y)^{k-\al_{ik}} Q_{i,j_0,k}
+ \sum_{j=0}^{j_0-1}z^jR_{ij}(x,y) + xR_i(x,y,z),
\end{equation}
where $\Ga_i := \{k: (i,k)\in \Ga\}$. 

Recall that the $k-\al_{ik}$, $k \in \Ga_i$ are distinct.
Choose $i_0$ such that $\Ga_{i_0} \neq \emptyset$.
We consider three subcases of Subcase II.2.2, depending on $j_0$.

\medskip\noindent
\emph{First suppose that $j_0 = 0$.} 
Choose $k_0$ such that 
$k_0-\al_{i_0,k_0} = \max\{k-\al_{ik}: k \in \Ga_{i_0}\}$. Then $0<k_0-\al_{i_0,k_0}<d$ and
$\p_y^{k_0-\al_{i_0,k_0}}\left(\tau^*T_{i_0}/x^{\mu\la_2+\be}\right)$ does not vanish at $b$;
therefore, $d(b) \leq k_0-\al_{i_0,k_0}<d(a)$.

\medskip\noindent
\emph{Secondly, suppose that $0 < j_0 \leq \mu$.} By \eqref{eq:w2avar1},
\begin{equation}\label{eq:sum}
\p_z^{j_0}\left(\frac{\tau^*T_{i_0}}{x^{\mu\la_2+\be}}\right) = j_0! \sum_{k\in \Ga_{i_0}} (\eta+y)^{k-\al_{i_0,k}} Q_{i_0,j_0,k}
+ xR_{i_0}(x,y,z).
\end{equation}
Since $\mu\leq k < d$ in \eqref{eq:sum}, there are at most $d-\mu$ terms in the sum, with distinct exponents. 
Hence there exists $l_0 < d-\mu$ such that $\p_y^{l_0}$ applied to the sum in \eqref{eq:sum} is a unit;
therefore, $\p_z^{j_0}\p_y^{l_0}\left(\tau^*T_{i_0}/x^{\mu\la_2+\be}\right)$ is a unit. It follows that
$d(b) \leq j_0 + l_0 \leq \mu + l_0 < \mu + d(a) - \mu = d(a)$.

\medskip\noindent
\emph{Finally, suppose that $\mu < j_0$.} If $k \in \Ga_{i_0}$, then $j_0 < k$. So the sum 
in \eqref{eq:sum} has at most $d-j_0$ terms, with distinct exponents. As above, there exists $l_0 < d-j_0$ 
such that $\p_y^{l_0}$ applied to this sum is a unit; therefore, 
$\p_z^{j_0}\p_y^{l_0}\left(\tau^*T_{i_0}/x^{\mu\la_2+\be}\right)$ is a unit, and
again $d(b) \leq j_0 + l_0 < j_0 + d(a) - j_0 = d(a)$.

\medskip
This completes Case II of the proof of Lemma \ref{lem:1pt2}. Case III following has a similar pattern.

\medskip\noindent
\emph{Case III. The point $\tau_2(b)$ belongs to the $v$-chart, but not to the $u$- or $w$-charts.}
The $v$-chart has
adapted coordinates $(\tu,\tv,\tw)$, where $\tE = (\tu=\tv=0)$, in which
$$
u = \tu\tv,\quad v= \tv,\quad w=\tv\tw,
$$
and $\tau_2(b)=0$. Similarly to Case II, 
\begin{equation}\label{eq:w3}
\begin{aligned}
\tau_1^*\s_1 &= \tu^\al\tv^\al,\\
\tau_1^*\s_i &= g_i(\tu\tv) + \tu^\de \tv^{\de+\mu} \cdot\frac{\tau_1^*T_i}{\tv^\mu},\quad i=2,\ldots,N,
\end{aligned}
\end{equation}
and each

\begin{equation*}
\frac{\tau_1^*T_i}{\tv^\mu} = \sum_{k=\mu}^{d-1} \tu^{\al_{ik}}\tv^{k-\mu}Q_{ik}(\tu,\tw) 
+ \tv^{d-\mu} \left(\tau_1^*\tT_i + \sum^{d-1}_{j=0} d_{ij}(\tu,\tv,\tw) \right),
\end{equation*}
where each $Q_{ik} = \tau_1^*P_{ik}/\tv^{k-\al_{ik}}$ and each $d_{ij} = \tau_1^*c_{ij}/\tv^{d-j}$. 
Then, for each $i,k$, either $Q_{ik}=0$ or there exists $j_{ik} < k$ such that:
\begin{equation*}
Q_{ik}(\tu,\tw) = \tw^{j_{ik}}Q_{i,j_{ik},k} + \sum_{j=0}^{j_{ik}-1}\tw^jQ_{ijk} +\tu R_{ik}(\tu,\tw)  
\end{equation*}
where $Q_{i,j_{ik},k}$ is a nonzero constant.

Moreover, $\tau_1^*\cJ$ is the ideal
$$
\tau_1^*\cJ = \tv^{\mu}\cdot\left(\tv^{d-\mu};\, \tu^{\al_{ik}}\tv^{k-\mu},\, (i,k) \in I\right),
$$
and principalization of $\tau_1^*\cJ$ is equivalent to principalization of $\cK := \tv^{-\mu}\cdot \tau_1^*\cJ$.
Since $\tau_1^*\cJ$ is generated by finitely many exceptional monomials in two variables,
$\tau_2: (\tX,\tE) \to (X_1,E_1)$ is a composite of combinatorial blowings-up, over the $v$-chart. 
As in Case II, we consider two subcases, depending on whether $b$ is a 2-point or a 1-point; each of these
subcases will be divided into further subcases, depending on which generator of $\cK$
pulls back to a generator of the principal ideal $\tau_2^*\cK$.

\medskip\noindent
\emph{Subcase III.1. The point $b$ is a 2-point.} There are adapted coordinates $(\bx,z) = (x_1,x_2,z)$
centred at $b$ such that $\tE = (x_1x_2=0)$ and
$$
\tu = \bx^{\bla_{1}},\quad \tv = \bx^{\bla_{2}},\quad \tw = z,
$$
where $\bla_{1}, \bla_{2}$ are $\IQ$-linearly independent. By \eqref{eq:w3},
\begin{align*}
\tau^*\s_1 &= \bx^{\al(\bla_{1}+\bla_{2})},\\
\tau^*\s_i &= \tg_i(\bx) + \bx^{\de\bla_{1} + (\de+\mu)\bla_{2}} \cdot\frac{\tau^*T_i}{\bx^{\mu\bla_{2}}},\quad i=2,\ldots,N.
\end{align*}

\medskip\noindent
\emph{Subcase III.1.1. The ideal $\tau_2^*\cK$ is generated by $\tau_2^*(\tv^{d-\mu})
= \bx^{(d-\mu)\bla_{2}}$.}
Then 
$$
\tau^*\s_2 = \tg_2(\bx) + \bx^{\de\bla_{1} + (\de+d)\bla_{2}} \cdot \left(\tau^*T_2 / \bx^{\mu\bla_{2}}\right)
$$
and
$$
\frac{\tau^*T_2}{\bx^{d\bla_{2}}} = \tau^*U + R(\bx,z),
$$
where $R(\bzero,z)=0$. Since $\bla_{1}, \bla_{2}$ are linearly independent and
$d\neq 0$, it follows that $d(b)=0<d(a)$, so $\rho(b)=0<\rho(a)$.

\medskip\noindent
\emph{Subcase III.1.2. There exists $i_0$ and $k_0$ (where $\mu \leq k_0 < d$) such that
$\tau_2^*\cK$ is generated by $\tau_2^*(\tu^{\al_{i_0, k_0}}\tv^{k_0-\mu})$.} In this subcase, we can show that
$d(b)<d(a)$ exactly as in Subcase II.1.2 above.

\medskip\noindent
\emph{Subcase III.2. The point $b$ is a 1-point.} There are adapted coordinates $(x,y,z)$ centred at $b$ such that
$\tE=(x=0)$ and
$$
\tu = x^{\la_1}(\eta+y)^{-1},\quad \tv = x^{\la_2}(\eta+y),\quad \tw = z,
$$
where $\eta\neq 0$. By \eqref{eq:w3},
\begin{align*}
\tau^*\s_1 &= x^{\al(\la_1+\la_2)},\\
\tau^*\s_i &= \tg_i(x,y) + x^{\de\la_1 + (\de+\mu)\la_2} \cdot\frac{\tau^*T_i}{x^{\mu\la_2}},\quad i=2,\ldots,N.
\end{align*}

\medskip\noindent
\emph{Subcase III.2.1. The ideal $\tau_2^*\cK$ is generated by $\tau_2^*(\tv^{d-\mu}) = 
x^{(d-\mu)\la_2}(\eta+y)^{d-\mu}$.} Then
$$
\frac{\tau^*T_2}{x^{d\la_2}} = (\eta + y)^d\tau^*U + \sum_{j=0}^{d-2}\tau^*a_{2j}x^{(j-d)\la_2}(\eta+y)^j, 
$$
and $\tau^*a_{2j} = a_{2j}(x^{\la_1+\la_2},x^{\la_2}(\eta+y)z)$. Since $\eta\neq0$, it is clear that
$d(b) \leq d(a)-1$.

\medskip\noindent
\emph{Subcase III.2.2. The ideal $\tau_2^*\cK$ is not generated by $\tau_2^*(\tv^{d-\mu})$.} Then there
exists $(i,k) \in I$ such that $\tau_2^*(\tu^{\al_{ik}}\tv^{k-\mu})$ generates $\tau_2^*\cK$. In this subcase,
we can show that $d(b)<d(a)$ precisely as in Subcase II.2.2 above.
\end{proof}

\medskip
This completes the proof of Lemma \ref{lem:prepnormal}.

\section{Decreasing the main invariant}\label{sec:decrho}

In this section, we prove Lemma \ref{lem:decrho} and thus complete the proof of Theorem  \ref{thm:dim3}. 
We continue to use the notation of Sections \ref{sec:outline} and \ref{sec:prepnorm}.
A sequence of blowings-up as in the conclusion of Lemma \ref{lem:decrho} will be called \emph{permissible}.

To prove Lemma \ref{lem:decrho}, we will show that, in general,
every prepared point $a \in \supp E$ admits a neighbourhood $U$ over which there is
a morphism $\tau: (\widetilde{U},\widetilde{E}) \to (U,E|_U)$ given by a permissible finite sequence of blowings-up
over $(\rho = \rho(a))$, such that $\rho(b) < \rho(a)$, for all $b \in \tU$ (see Lemmas \ref{lem:decrho1}, 
\ref{lem:decrho2}, \ref{lem:decrho3}, depending
on the nature of $a$). Lemma \ref{lem:decrho} clearly follows from this local statement, because the sequence of
blowings-up in Lemma \ref{lem:decrho}(2) is uniquely determined by the maximum value of $\rho$. Note that, if $a$ is a
generic prepared 1-point, then there is a neighbourhood $U$ of $a$ over which $A = \emptyset$, so that a
permissible blowing-up sequence means a finite sequence as in Lemma \ref{lem:decrho}(2).


\subsection{Declared local exceptional divisor}\label{subsec:declared} 
Suppose that $\sigma$ has prepared normal form \eqref{eq:norm2} in adapted coordinates $(\pmb{u},v)$ at a $2$-point $a$ (respectively, prepared normal form \eqref{eq:norm1} in adapted coordinates $(u,v,w)$ at a $1$-point $a$). Although
$v$ (and $w$) are not globally defined, there is a neighbourhood $U$ of $a$ in which $(v=0)$ (or $(v=0)$ and
$(w=0)$) are smooth hypersurfaces that we can add to $E$ to obtain a divisor $D$ on $U$. We consider $D$ a
``declared exceptional divisor''. The divisor $D$ and a corresponding monomial idea $\cI = \cI(\s,a)$ are defined according
to the nature of $a$, as follows.

\begin{definitions}[\emph{Declared exceptional divisor} and associated monomial ideal]
\label{def:declared} We use the notation of Lemma \ref{lem:prepnormal}.
\begin{itemize}
\item If $a$ is a prepared 2 point \eqref{eq:norm2},
\begin{align*}
D &:= E|_U +(v=0) = (u_1 u_2 v=0), \\
\mathcal{I} &:= (v^d,\,  \pmb{u}^{\pmb{r}_{ij}}v^j, \bu^{\bbe}),
\end{align*}
where $\bbe$ will also be denoted $\br_{i_0,0}$ (for reasons evident from 
\eqref{eq:norm2});
\smallskip
\item If $a$ is a generic prepared 1-point (\eqref{eq:norm1} with all $s_{ij}=0$),
\begin{align*}
D &:=  E|_U +(v=0) = (uv=0),\\
\mathcal{I} &:= (v^d,\,  u^{r_{ij}}v^j,\,u^{\beta}),
\end{align*}
where $\be$ will also be denoted $r_{i_0,0}$;
\smallskip
\item If $a$ is a non-generic prepared 1-point \eqref{eq:norm1}, 
\begin{align*}
D &:=  E|_U +(v=0) +(w=0) = (uvw=0),\\
\mathcal{I} &:= (v^d,\, u^{r_{ij}} w^{s_{ij}}v^j,\, u^{\beta}w),
\end{align*}
where again $\be$ will also be denoted $r_{i_0,0}$.
\end{itemize}
The notation for $\cI$ in each case above is understood to mean that $(i,j)$ runs over
the index set $J := \{(i,j): a_{ij}\neq 0,\, i=2,\ldots,N,\, j=1,\ldots,d-1\}$.
\end{definitions}

\begin{remarks}\label{rem:declared}
(1) $D$ and $\mathcal{I}(\s,a)$ depend on the adapted coordinates at $a$. 
Nevertheless, if $U$ is a small open neighbourhood of $a$ and $\rho(b) = \rho(a)$, where $b \in U$,
then $D$ induces a declared exceptional divisor at $b$, and $\cI(\s,a)$ induces $\cI(\s,b)$.

\medskip\noindent
(2) A blowing-up that is admissible for $D$ is also admissible for $E|_U$. Suppose that
$\tau: \tU \to U$ is given by a finite sequence of blowings-up that are admissible for $D$. 
If $\tD,\,\tE$ denote the
transforms of the divisors $D,\, E|_U$, respectively, then $\tD = \tE$ $+$ strict transform
of $(v=0)$ (or $\tD =\tE$ $+$ strict transforms of $(v=0)$ and $(w=0)$ in the non-generic prepared 1-point case).
\end{remarks}

\begin{proposition}\label{prop:declared}
Let $a\in X$ and suppose that $\sigma$ has prepared normal form at $a$. Take $U,\, D$ and $\cI$ as in Definitions
\ref{def:declared}.  Let $\tau: (\widetilde{U},\widetilde{E}) \to (U,E|_U)$ be a sequence of blowings-up with centres over
$\supp E|_U$ that are combinatorial with respect to $D$, and let $b \in \tau^{-1}(a)$. If $\tau^*(\mathcal{I})$ is 
a principal $\tD$-monomial ideal, then $\tau^*(\mathcal{I})$ is also $\tE$-monomial, and $\rho(b) < \rho(a)$.
\end{proposition}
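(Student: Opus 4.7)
The plan is to treat the two conclusions of the proposition separately. First, I would show that the principal $\tD$-monomial generator $\mathfrak{m}$ of $\tau^*\cI$ is automatically $\tE$-monomial. Since the centres of $\tau$ lie over $\supp E|_U$ and $\tau$ is combinatorial for $D$, every exceptional component of $\tau$ is a component of $\tE$; hence the only components of $\tD$ not in $\tE$ are the strict transforms $V'$ of $(v=0)$ and, in the non-generic $1$-point case, $W'$ of $(w=0)$. The $V'$-exponent of $\mathfrak{m}$ is bounded above by that of the pullback of the generator $\bu^{\bbe}$ (or $u^{\be}w$ in the non-generic $1$-point case), which is zero because the generator involves no $v$; symmetrically, the $W'$-exponent of $\mathfrak{m}$ is bounded by that of $\tau^*(v^d)$, which is zero.

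For the inequality $\rho(b) < \rho(a) = d(a) - 1$, the key input is that every $T_i$ lies in $\cI$: its monomial expansion in the prepared normal form is, up to unit coefficients, supported on the generators of $\cI$. Hence $\tau^*T_i = \mathfrak{m} \cdot S_i$ for some regular $S_i$. Setting $Q := \tau^*\bu^{\bal}$ and $P := \tau^*\bu^{\bde} \cdot \mathfrak{m}$, both $\tE$-monomial, one has $\sigma_1 \circ \tau = Q$ and $\sigma_i \circ \tau = g_i \circ \tau + P \cdot S_i$ with $d(g_i \circ \tau) \in \langle dQ \rangle$. A direct calculation gives
\[
d\sigma_1 \circ \tau \wedge d\sigma_i \circ \tau = QP \bigl(S_i\, d\log Q \wedge d\log P + d\log Q \wedge dS_i\bigr),
\]
and since $\bde \geq \bal$ forces $Q \mid P$, the minor $d\sigma_i \circ \tau \wedge d\sigma_j \circ \tau$ is also $QP$-divisible. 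So $\cF_1(\sigma \circ \tau)_b = QP \cdot \cJ_b$ for some ideal $\cJ_b$, and $\rho(b)$ equals the order of $\cJ_b$ modulo the $\tE$-ideal at $b$.

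To control that residual order, observe that modulo the $\tE$-ideal every logarithmic derivative $u_k \p_{u_k}$ annihilates, so $\cJ_b \bmod \tbu$ is generated by scalar multiples of $\p_{v_l} S_i|_{\tbu=0}$ and of $S_i|_{\tbu=0}$. The principality of $\tau^*\cI$ forces exactly one pullback generator of $\cI$ to match $\mathfrak{m}$ up to a unit. A case analysis, according to whether the matched generator is $\tau^*\bu^{\bbe}$, $\tau^*(v^d)$, or $\tau^*(\bu^{\br_{ij}} v^j)$, shows that the corresponding $S_{i_0}|_{\tbu=0}$ is a polynomial in $v'_{\mathrm{loc}}$ or $w'_{\mathrm{loc}}$ of degree strictly less than $d$, with a nonzero leading coefficient (where $v'_{\mathrm{loc}}$ and $w'_{\mathrm{loc}}$ denote local equations of $V'$ and $W'$, or units if $b$ is not on them). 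Taking into account the derivative generators $\p_{v_l} S_{i_0}$ and the preparation condition $a_{2, d-1} = 0$ yields a strict inequality, so $\rho(b) < \rho(a)$.

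The hardest part is the degenerate case in which the $\tE$-exponents of $P$ and $Q$ are $\IQ$-linearly dependent, so that the scalars coming from $d\log Q \wedge d\log P$ all vanish and only the derivative generators $\p_{v_l} S_i$ survive in $\cJ_b \bmod \tbu$. In that case one must track the effect of $\p_{v_l}$ on the remaining terms and invoke the preparation conditions more delicately; the required estimates will mirror the case analyses performed in the proofs of Lemmas \ref{lem:2pt}, \ref{lem:1pt1}, and \ref{lem:1pt2}.
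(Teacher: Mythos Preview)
Your argument for the first conclusion is correct and matches the paper's (which phrases it as $\cosupp \tau^*\cI \subset \supp \tE$ because $\supp\cI$ has codimension at least two).

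For the second conclusion, your framework---factor out $\mathfrak{m}$, write $\tau^*T_i = \mathfrak{m}\cdot S_i$, and analyse the $S_i$ according to which generator of $\cI$ pulls back to $\mathfrak{m}$---is exactly the paper's strategy. But your execution has real gaps.

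First, the claim that ``$S_{i_0}|_{\tbu=0}$ is a polynomial in $v'_{\mathrm{loc}}$ or $w'_{\mathrm{loc}}$ of degree strictly less than $d$'' is false when the matched generator is $\tau^*(v^d)$ and $b$ is a $1$-point of $\tD$. In that case (your own first paragraph shows) $b\notin V'$, so $v'_{\mathrm{loc}}=\zeta+z$ is a unit, and $S_2|_{x=0}$ has the form $(\zeta+z)^d\,C(y)+\text{(degree $\leq d-2$ in $\zeta+z$)}$ with $C$ a unit: degree exactly $d$. The actual argument is that $\p_z^{d-1}S_2|_{x=0}=d!(\zeta+z)C(y)$, a unit at $b$ because $\zeta\neq 0$; this is where $a_{2,d-1}=0$ is used, and it gives $d(b)\leq d-1$.

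Second, your sketch does not distinguish whether $b$ is a $1$-, $2$-, or $3$-point of $\tD$, and the mechanisms differ. When $b$ is a $2$- or $3$-point, the paper does \emph{not} bound derivative orders; instead it verifies that the exponent of $P=\tau^*(\bu^{\bde})\cdot\mathfrak{m}$ is $\IQ$-linearly independent of that of $Q=\tau^*(\bu^{\bal})$ (using the $\IQ$-independence of the $\bla_k$), which forces $d(b)=0$ directly. Your identity $d\s_1\wedge d\s_i = QP(S_i\,d\log Q\wedge d\log P + d\log Q\wedge dS_i)$ points toward this, but you never actually check the independence, and it is not automatic: it depends on which generator matches $\mathfrak{m}$ and on which $\bla_k$ span the stratum at $b$.

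Third, the ``degenerate case'' you flag at the end is not a single residual difficulty to be handled by estimates borrowed from Lemmas~\ref{lem:2pt}--\ref{lem:1pt2}; those lemmas treat a different construction. What is actually required is an enumeration of roughly twenty subcases (three types of $a$, up to three types of $b$, three choices of matched generator, plus the (a)/(b) splitting when two of the $\bla_k$ are dependent), each dispatched by one of the two mechanisms above. Your plan is right, but the proof is not complete until that enumeration is carried out.
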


Note that the hypotheses of Proposition \ref{prop:declared} does not exclude $(v=w=0)$ as centre of blowing up (in the 
non-generic 1-point case). Proposition \ref{prop:declared} is a purely local assertion; we will not claim to principalize 
$\cI(\s,a)$ by blowings-up that are global over $X$. The proposition plays an important part in the proof of Lemma \ref{lem:decrho} but, in the latter, we do not necessarily principalize $\cI(\s,a)$ at every point $b$ over a given 
$a \in \Sigma$ because $\rho$ may decrease before $\cI$ becomes principal.

The proof of Proposition \ref{prop:declared} is a case-by-case analysis which we leave to the end;
we first complete the proof of Lemma \ref{lem:decrho}.

\subsection{Permissible sequences of blowings-up} As indicated above, in order to prove 
Lemma \ref{lem:decrho},
it is enough to show that every prepared
point $a\in  \supp E$ admits a neighbourhood $U$ over which the invariant $\rho$ can be decreased by a permissible
finite sequence of blowings-up. This will be done separately in the case that $a$ is a generic 1-point, a 2-point or
a non-generic 1-point, in the three lemmas \ref{lem:decrho1}, \ref{lem:decrho2}, \ref{lem:decrho3} following. In each 
of these lemmas, $U$ denotes a (relatively compact) open neighbourhood of $a$ in which $\rho \leq \rho(a)$, and
$\Sigma := \{x\in U: \rho(x) = \rho(a)\}$. We assume that $U$ is small enough that the declared exceptional
divisor $D$ can be defined as in \S\ref{subsec:declared}, and we use the notation of Lemma \ref{lem:prepnormal} and
Definitions \ref{def:declared}.

\begin{lemma}\label{lem:decrho1}
Let $a$ be a generic 1-point of $\Sigma$. Then there is a finite sequence of permissible blowings-up $\tau: (\widetilde{U},\widetilde{E}) \to (U,E|_U)$ which is combinatorial with respect to $D$, such that $\rho(b)<\rho(a)$ for all 
$b \in \widetilde{U}$. Moreover, the weak transform of $\cI$ by $\tau$ is principal except perhaps on 
$\widetilde{E} \cap \widetilde{H}$, where $\widetilde{H}$ is the strict transform of $H := (v=0)$.
(The \emph{weak transform} means the residual ideal after factoring out the exceptional divisor as much
as possible.) 
\end{lemma}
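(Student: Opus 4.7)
The plan rests on the crucial observation that, at a generic prepared $1$-point $a$, the ideal
$$
\cI = (v^d,\, u^{r_{ij}}v^j,\, u^\be)
$$
involves only the variables $u$ and $v$, not $w$. Thus the principalization of $\cI$ reduces to an essentially two-dimensional combinatorial problem, fibred over the $w$-direction. I would apply the standard resolution of singularities to $\cI$ viewed as a monomial-type ideal in $k\{u,v\}$, obtaining a finite sequence of point blowings-up (always at the origin of the current $(u',v')$-chart) after which $\cI$ becomes principal. Lifted back to $U$, each such point blowing-up corresponds to the admissible blowing-up of a smooth curve $C_i = (u_i = v_i = 0)$ in $X_i$, which is the intersection of a component of the transformed $E$ with the strict transform $\tH$ of $H := (v=0)$; in particular, every centre is combinatorial with respect to the transformed declared divisor $D_i$.

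Next, I would verify the permissibility of each centre. Since $a$ is a generic $1$-point, the set $A$ of non-generic points of $\Sigma$ is empty in a neighbourhood of $a$, so permissibility reduces to the equality $C_i = \overline{\Sigma_i}$ (or at least the inclusion $C_i \supseteq \overline{\Sigma_i}$). The containment $\Sigma_i \subseteq C_i$ is an immediate consequence of Proposition \ref{prop:declared} together with upper-semicontinuity of $\rho$: at any point outside $C_i$ the pullback $\tau_i^*\cI$ is already a principal $\tD_i$-monomial ideal, which forces $\rho < \rho(a)$ there. Conversely, along the generic $1$-points of $C_i$ the Weierstrass form \eqref{eq:weier} transforms cleanly under the blowing-up of $(u=v=0)$, with the coefficients $a_{ij}$ remaining monomials times units in the new adapted coordinates and the integer $d$ preserved, so $\rho$ continues to attain the value $\rho(a)$ until $\cI$ is fully principalized.

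Once $\tau^*\cI$ is principal $\tD$-monomial everywhere on $\tU$, Proposition \ref{prop:declared} yields $\rho(b) < \rho(a)$ for every $b \in \tU$, which is the main assertion of the lemma. For the final statement on the weak transform, write $\tau^*\cI = M \cdot h^k$ in principal $\tD$-monomial form, where $M$ is a monomial in components of $\tE$ and $h$ is a local equation for $\tH$; then the weak transform, obtained by dividing out the maximal $\tE$-monomial factor, is the principal ideal $(h^k)$. The caveat on $\tE \cap \tH$ reflects the fact that where a component of $\tE$ meets $\tH$ the factorization $M \cdot h^k$ is no longer unambiguous, since part of the monomial factor can be absorbed into $h$.

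The main obstacle in carrying out this plan is the precise identification of $\Sigma_i$ at each stage, in particular the exclusion of unexpected components of $\Sigma_i$ lying outside the intended centre $C_i$. This requires tracking the full system of coefficients $a_{ij}$ --- and especially the distinguished coefficient $u^\be w$ responsible for the $u^\be$ generator of $\cI$ --- through the repeated combinatorial blowings-up of curves of type $(u=v=0)$. The monomiality and unit-hood of the factors $\ta_{ij}$, inherited from the prepared normal form, render this an essentially combinatorial check, but it is delicate and constitutes the technical heart of the argument.
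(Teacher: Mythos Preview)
Your overall strategy---combinatorial principalization of the two-variable ideal $\cI$, then invoke Proposition~\ref{prop:declared}---matches the paper's, but the permissibility argument has a genuine gap. You assume that the non-principal locus of (the weak transform of) $\cI$ coincides with the maximum locus $\Sigma_i$ of $\rho$, so that every centre in the 2D resolution of $\cI$ is permissible. This fails. After the first blowing-up of $V(u,v)$, write $\mu(a) := \min\{\be,\, r_{ij}+j\}$. When $\mu(a) < d$, at the origin $b$ of the $u$-chart one computes directly (from the factored form $\tau_1^*T_i = x^{\mu(a)}(\cdots)$) that $d(b) \le \mu(a) < d$, hence $\rho(b) < \rho(a)$, \emph{even though} the weak transform of $\cI$ at $b$ is typically non-principal (it contains both a pure power of $\tv$ and a pure power of $x$). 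So the curve $(x=\tv=0) = \tE \cap \tH$ lies in the non-principal locus of $\cI$ but not in $\Sigma_1$; blowing it up, as full 2D resolution of $\cI$ would demand, is not permissible. This is exactly why the lemma only claims principality of the weak transform ``except perhaps on $\tE\cap\tH$''---the paper simply leaves the $u$-chart alone after one blow-up. Proposition~\ref{prop:declared} gives only the implication (principal $\Rightarrow \rho<\rho(a)$), not its converse.

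The second half of your permissibility argument is also off. You claim that along generic $1$-points of $C_i$ the integer $d$ is preserved, so $\rho$ stays at $\rho(a)$. But once you pass to the $v$-chart, the origin is a \emph{$2$-point} of $E_1$ (both $\tu=0$ and the exceptional $\tv=0$ are components of $E_1$), the strict transform $\tH$ is absent, and the relevant centre is the $2$-curve $(\tu=\tv=0)$---not an $E\cap\tH$ curve at all. At such $2$-points the Weierstrass picture changes entirely: the paper shows, by comparing the ideal $\cK$ of Taylor coefficients of $\overline\tau^*T_i$ with the pulled-back monomial ideal $\overline\tau^*\cJ$, that $\rho(c)=\infty$ precisely where $\cJ$ is not yet principal. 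It is this jump to $\infty$, not preservation of $\rho(a)$, that identifies the subsequent combinatorial centres with the maximum locus of $\rho$ and makes $\tau_2$ permissible. This computation is the missing ``technical heart'' you allude to, and your heuristic (monomiality and $d$ preserved along $1$-points) does not substitute for it.
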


\begin{proof} For brevity, we write $E$ instead of $E|_U$. Note that the unique permissible centre of
blowing up in $U$ is $V(u,v) = (u=v=0)$. Let $\mu(a) := \min\{\be, r_{ij}+j\}$. 

We first show that we can reduce to the case $\mu(a) < d$. Suppose that $\mu(a) \geq d$.
Consider the blowing-up $\tau_1: (U_1,E_1) \to (U,E)$ with centre $V(u,v)$, and let $b \in \tau_1^{-1}(a)$. 
There are two possibilities:
\begin{enumerate}
\item $b$ belongs to the $v$-chart,  with coordinates $(\tu,\tv,\tw)$ in which $\tau_1$ is given by $u = \tu\tv,\, v=\tv,\, w=\tw$.
(The strict transform of $H$ does not intersect this chart.) 
Then $\tau_1^*\cI$ is the principal ideal generated by $\tv^d$ at $b$; therefore, $\rho(b) < \rho(a)$, by
Proposition \ref{prop:declared}. 
\smallskip
\item $b$ is the origin of the $u$-chart, with coordinates $(x,\tv,\tw)$ in which $\tau_1$ is given by
$u=x,\, v=x\tv,\, w=\tw$. Then
\[
\tau_1^{\ast}T_i = x^d\left(\tilde{v}^d + \sum_{i=1}^{d-1} x^{r_{ij}+j -d }\tilde{v}^j\tau_1^{\ast}\tilde{a}_{ij} 
+ x^{\beta-d} \tau_1^{\ast}\tilde{a}_{i0} \right).
\]
Clearly, $\rho(b) \leq \rho(a)$ and $\mu(b) < \mu(a)$ if $\rho(b) = \rho(a)$.
\end{enumerate}

We therefore assume that $\mu(a) < d$. Again let $\tau_1: (U_1,E_1) \to (U,E)$ denote the blowing-up with 
centre $V(u,v)$, let $b \in \tau_1^{-1}(a)$, and consider the two coordinate charts as above.
\begin{enumerate}
\item Suppose that $b$ belongs to the $u$-chart. If $b\neq 0$, then $\tv \neq 0$ at $b$, so that
$\tau_1^*\cI$ is a principal ideal generated by a monomial in $x$, and $\rho(b) < \rho(a)$, by
Proposition \ref{prop:declared}. 
On the other hand, if $b=0$, then
\[
\tau_1^{\ast}T_i = x^{\mu(a)}\left(\tilde{v}^dx^{d-\mu(a)} + \sum_{i=1}^{d-1} x^{r_{ij}+j -\mu(a)}\tilde{v}^j\tau_1^{\ast}\tilde{a}_{ij}
+ x^{\beta-\mu(a)} \tau_1^{\ast}\tilde{a}_{i0}\right),
\]
and clearly $\rho(b) < \rho(a)$. We remark that, in this case, the weak transform of $\cI$ is supported in
$\widetilde{E} \cap \widetilde{H}$.

\smallskip
\item Otherwise, $b$ is the origin of the $v$-chart, where $E_1 = (\tilde{u}\tilde{v}=0)$ and 
\begin{equation}\label{eq:excideal}
\tau_1^{\ast}\mathcal{I}  = \tv^{\mu(a)}\left( \tilde{v}^{d-\mu(a)},\, \tilde{u}^{r_{ij}}\tilde{v}^{r_{ij}+j-\mu(a)},\,
\tilde{u}^{\beta}\tilde{v}^{\beta-\mu(a)} \right).
\end{equation}
\end{enumerate}

Now, let $\cJ$ denote the monomial ideal on $U_1$ determined by the right hand side of \eqref{eq:excideal},
and let $\tau_2$ be a sequence of combinatorial blowings-up such that $\tau_2^*\mathcal{J}$ is a principal monomial
ideal. Clearly, $\tau = \tau_2 \circ \tau_1$ is a sequence of combinatorial blowings-up with respect to $F$, and the weak transform of $\mathcal{I}$ by $\tau$ is supported on $\widetilde{E} \cap \widetilde{H}$. Furthermore, if $c \in \tau^{-1}(a)$,
then $\rho(c) < \rho(a)$, either by the preceding case (1) or by Proposition \ref{prop:declared}. 

Consider a sequence of blowings-up
$\overline{\tau}: (\overline{U},\overline{E}) \to (U_1,E_1)$ that are combinatorial 
with respect to $E$ (e.g., part of the sequence $\tau_2$). Let $b$ be the origin of the $v$-chart in the preceding
case (2), and let $c \in \overline{\tau}^{-1}(b)$ be a point where 
$\overline{\tau}^*\mathcal{J}$ is not principal. We will show that $\rho(c) = \infty$. 

This implies that we can principalize $\cJ$ by combinatorial blowings-up given at each step by the
maximal locus of $\rho$; in fact, the locus $(\rho = \infty)$: If $\cJ$ is not principal at $b$, then $(\rho = \infty)$
is given by the 2-curve $(\tu = \tv = 0)$ in \eqref{eq:excideal}. After each blowing-up with centre $(\rho = \infty)$,
the new locus $(\rho = \infty)$ is a disjoint union of analogous 2-curves defined in the various coordinate charts.
In other words, $\cJ$ can be principalized by a sequence $\tau_2$ of permissible blowings-up.

To show that $\rho(c) = \infty$ above: 
Since $\overline{\tau}^{\ast}\mathcal{J}$ is not principal, $c$ must be a 2-point of $\overline{E}$. Therefore,
there are coordinates $(\bx,y)= (x_1,x_2,y)$ at $c$ in which $\overline{\tau}$ us given by
\[
\tu =  x_1^{\kappa_1}x_2^{\kappa_2}, \quad \tv= x_1^{\la_1}x_2^{\la_2},\quad \tilde{w}=y,
\]
so that
\[
\overline{\tau}^{\ast}\tilde{v}^d = \pmb{x}^{\pmb{\bar{r}}_d}, \quad \overline{\tau}^{\ast}\tilde{u}^{r_{ij}}\tilde{v}^{r_{ij}+j}
= \pmb{x}^{\pmb{\bar{r}}_{ij}} , \quad \overline{\tau}^{\ast}\tilde{u}^{\beta} = \pmb{x}^{\pmb{\bar{\beta}}},
\]
with suitable exponents. Since $(\kappa_1,\kappa_2)$ and $(\lambda_1,\lambda_2)$ are linearly independent, these exponents are distinct for fixed $i$, and all except $\pmb{\bar{\beta}}$ are 
linearly independent of $\pmb{\bar{\alpha}}$, where
$\overline{\tau}^{\ast}\sigma_1 = \pmb{x}^{\pmb{\bar{\alpha}}}$. Let $\mathcal{K}$ denote the ideal generated by the coefficients of the formal expansions with respect to $y$ of $\overline{\tau}^{\ast}T_i$, $i=2, \ldots, N$.
Note that $\mathcal{K} \subset \overline{\tau}^{\ast}\mathcal{J}$, and that each monomial 
$\overline{\tau}^{\ast}\tilde{v}^d$, $\overline{\tau}^{\ast}\tilde{u}^{r_{ij}}\tilde{v}^{r_{ij}+j}$ in 
$\overline{\tau}^{\ast}\mathcal{J}$ appears in the expansion of the constant term (i.e., the coefficient of $y^0$)
of the formal expansion of $\overline{\tau}^{\ast}T_i$, for some $i$. Moreover, $\overline{\tau}^{\ast}\tilde{u}^{\beta}$ appears 
in the coefficient of $y$ in $\overline{\tau}^{\ast}T_{i_0}$. It follows that $\mathcal{K}$ is principal if and only if  
$\overline{\tau}^{\ast}\mathcal{J}$ is principal. Therefore, $\rho(c) = \infty$, as claimed.
\qed

\begin{lemma}\label{lem:decrho2}
Let $a$ be 2-point of $\Sigma$. Then there is a finite sequence of permissible blowings-up 
$\tau: (\widetilde{U},\widetilde{E}) \to (U,E)$ which is combinatorial with respect to $D$, such that $\rho(b)<\rho(a)$ 
for all $b \in \widetilde{U}$. Moreover, $\tau$ can be realized as a composite 
$\tau = \tau_3\circ \tau_2\circ \tau_1$, where $\tau_1$ is a single blowing-up with centre $a$, $\tau_2$
is the composite of a finite sequence of permissible blowings-up as in Lemma \ref{lem:decrho}(2), and $\tau_3$ is
the composite of a finite sequence of blowings-up over $a$.
\end{lemma}

\begin{proof}
The proof consists of four steps.

\medskip\noindent
\emph{Step 1. Decreasing $\rho$ outside the preimage of $a$.} Let $W := U \setminus \{a\}$. Then every point of
$\Sigma \cup W$ is a generic prepared 1-point. (If $a$ is an isolated point of $\Sigma$, then the proof of the
lemma reduces essentially to Step 3 below.) By Lemma $\ref{lem:decrho1}$, there is a finite sequence 
$\tilde{\tau}$ of permissible blowings-up
$\tilde{\tau} : (\widetilde{W},\widetilde{E}) \to (W, E|_W)$
such that $\rho(b) < \rho(a)$ for all $b \in \widetilde{W}$. Moreover, all blowings-up involved are combinatorial with
respect to $D|_W$ and, after a first blowing-up with centre $a$
(to separate curves that will be blown up simultaneously according to Lemma \ref{lem:decrho1}), 
we can take the closures of all the centres of blowings-up that comprise 
$\tilde{\tau}$, to get a sequence 
$$
\ttau_1: (U_1,E_1) \to (U,E)
$$ 
of permissible blowings-up over $U$,
such that $\rho(b) < \rho(a)$ for all $b$ outside $\ttau_1^{-1}(a)$. Each centre of blowing up is a union
(necessarily disjoint) of closures of curves given by Lemma \ref{lem:decrho1}). Moreover, all blowings-up 
are combinatorial with 
respect to $D$. In particular, $\ttau_1$ also makes sense as a morphism $\ttau_1 : (U_1,D_1) \to (U,D)$.

Set $\mathcal{I}_1 := \ttau_1^{\ast}\mathcal{I}$. Let $H_1$ denote the strict transform of $H := (v=0)$ by $\ttau_1$,
and $\mathcal{J}_1$ the residual ideal sheaf of $\cI_1$ after factoring out the greatest possible monomial 
in local generators of components of the exceptional divisor $E_1$. By Lemma \ref{lem:decrho1}, 
$V(\mathcal{J}_1) \subset \ttau_1^{-1}(a) \cup (E_1 \cap H_1)$. Let $\Gamma_1$ denote the closure of
$V(\mathcal{J}_1)\setminus \ttau^{-1}_1(a)$. Then $\Gamma_1$ is an analytic (or regular) curve (unless it is
empty) with at most two connected components $\g_1^{(k)}$ ($k=1$ or $k=1,2$), each of which intersects
$\ttau^{-1}_1(a)$ in a point $p_1^{(k)}$. If there are two components, then they were separated by the first 
blowing-up (with centre $a$) of the sequence $\ttau_1$, so that $p_1^{(1)},\, p_1^{(2)}$ are distinct.

\medskip\noindent
\emph{Step 2. Decreasing $\rho$ at the limit points of the 1-curve(s) $\g_1^{(k)}$.} 
Let $p_1$ denote either of the points $p_1^{(k)}$,
and $\g_1$ the corresponding curve $\g_1^{(k)}$. Assume that $\cI_1$ is not principal at $p_1$.
Then there is a coordinate system $(\bx,y) = (x_1,x_2,y)$ at $p_1$ in which $\ttau_1$ is given by:
\[
u_1 = \pmb{x}^{\pmb{\lambda}_1} ,\quad u_2 = \pmb{x}^{\pmb{\lambda}_2}  ,\quad v= \pmb{x}^{\pmb{\lambda}_3} y,
\]
where $\pmb{\lambda}_1$, $\pmb{\lambda}_2$ are $\mathbb{Q}$-linearly independent, $E_1 = (x_1x_2=0)$ 
and $D_1 = (x_1x_2y=0)$. We can assume that $(x_1=0)$ is the component of $E_1$ that does not project 
to $a$, so that $\g_1 = V(x_1,y)$. In particular, in this coordinate neighbourhood of $p_1$, the ideal $\cI_1$
has the form
\begin{equation}\label{eq:I1}
\cI_1 = ( \pmb{x}^{\pmb{\tilde{r}}_{2d}}y^d, \pmb{x}^{\pmb{\tilde{r}}_{ij}}y^j,\pmb{x}^{\pmb{\tilde{\beta}}})
\end{equation}
for suitable $\pmb{\tilde{r}}_{2d}$, $\pmb{\tilde{r}}_{ij}$ and $\pmb{\tilde{\beta}}$. (Recall that the
first monomial in $\cI_1$ in \eqref{eq:I1} comes from $T_2$, and the last comes from $T_{i_0}$.) Write
$\pmb{\tilde{r}}_{i_0,0}= \pmb{\tilde{\beta}}$ (cf. Definitions \ref{def:declared}). Each $\pmb{\tilde{r}}_{ij}$ is a pair
$\pmb{\tilde{r}}_{ij} =(r_{ij1},r_{ij2})$. 

Let $m$ denote the minimum of $r_{ij1}$ over all $(i,j)$ corresponding to
monomials in \eqref{eq:I1}, and let $\Lambda := \{(i,j): r_{ij1}= m\}$. Take $(i_1,j_1) \in \Lambda$ with minimal
$j_1$. Since $d(b) < d(a)=d$ for $b$ outside $\ttau^{-1}(a)$ (cf. notation of Lemma \ref{lem:rho}), 
we see that $j_1  < d(a)$. We can also assume that $r_{i_1,j_1,2}$ is minimal over all pairs $(i,j)$; indeed, we
can blow up $p_1 = (0,0,0)$ once, and then:
\begin{enumerate}
\item After a further sequence of blowings-up of curves of the form $(x_2=y=0)$ (which are combinatorial  
with respect to $D_1$ and project to $a$), we can assume that $r_{i_1,j_1,2} \leq r_{ij2}$, for all $(i,j) \in \Lambda$. 
(This does not change the values of $r_{ij1}$.)

\smallskip
\item After a further sequence of blowings-up of curves of the form $(x_1 = x_2 =0)$ (which are combinatorial in respect to 
$D_1$ and project to $a$), we can assume that $r_{i_1,j_1,2} \leq r_{ij2}$, for all $(i,j) \notin \Lambda$. 
(Again this does not change the values of $r_{ij1}$.)
\end{enumerate} 

The above construction applies to $p_1 = p_1^{(k)}$, $k=1$ or $k=1,2$, and provides a sequence of permissible 
blowings-up
$$
\ttau_2 : (U_2,E_2) \to (U_1,E_1)
$$
(where the first blowing-up in the sequence has centre $p_1^{(1)} \cup p_1^{(2)}$ if $k=1,2$).
Let $D_2$, $\mathcal{I}_2$, $\mathcal{J}_2$, $p_2^{(k)}$ and $\gamma_2^{(k)}$ denote the objects
defined after $\ttau_2$ that are analogous to $D_1$, $\mathcal{I}_1$, $\mathcal{J}_1$, $p_1^{(k)}$ and 
$\gamma_1^{(k)}$. Clearly, at $p_2 = p_2^{(k)}$, the ideal $\mathcal{I}_2$ has the form
\begin{equation}\label{eq:I2}
\cI_2 =  \pmb{x}^{\pmb{\tilde{r}}_{i_1,j_1}} ( y^{j_1}, \pmb{x}^{\pmb{r}_{ij}}y^j, \pmb{x}^{\pmb{\beta}} ),
\end{equation}
(where $\gamma_2 = V(x_1,y)$)
for suitable $\pmb{r}_{ij}$, $\pmb{\beta}$, and it follows that $\rho(p_2) <\rho(a)$. In particular, if $j_1 = 0$, 
then $\mathcal{I}_2$ is principal at $p_2$ and $\rho(p_2)=0$.

\medskip\noindent
\emph{Step 3. Decreasing $\rho$ over $a$, outside the preimage(s) of the limit point(s) $p_2^{(k)}$.}
Let $W$ denote the complement of the curve(s) $\gamma_2^{(k)}$ in $U_2$. Then the ideal 
$\mathcal{J}_2\cdot \mathcal{O}_{W}$ has support in the preimage of $a$. There is a sequence 
$\ttau: (\widetilde{W},\widetilde{E}) \to (W,E_2|_W)$ of blowings-up that are combinatorial
with respect to $D_2|_W$, which principalizes
$\mathcal{J}_2\cdot\mathcal{O}_{W}$. Then $\ttau$ is permissible (all centres project to $a$), and $\rho < \rho(a)$
throughout $\widetilde{W}$, by Proposition \ref{prop:declared}.
Since all blowings-up involved are combinatorial with respect to $D_2|_W$, we can take the closures of all 
centres to get a sequence of permissible blowings-up
\[
\ttau_3 : (U_3,E_3) \to (U_2,E_2),
\]
where $\rho(b) < \rho(a)$ for every $b$ not in the preimage of $p_2^{(k)}$, and all blowings-up are combinatorial 
with respect to the divisor $D_2$. 

We define $D_3$, $\mathcal{I}_3$, $\mathcal{J}_3$, $p_3^{(k)}$ and $\gamma_3^{(k)}$ in the same way as
before. The precise form of the ideal $\mathcal{I}_3$ at $p_3^{(k)}$ is important for the next step, so let us compute it. 
Again let $p_3$ denote either point $p_3^{(k)}$ and let $\g_3 = \g_3^{(k)}$. By construction (see \eqref{eq:I2}), 
the centres of all blowings-up in $\ttau_3$ containing $p_2$ 
(or the analogous limit point after blowing up) are 
of the form $V(y,x_2)$. Therefore, there is a coordinate system $(u_1,u_2,v)$ at $p_3$ in which $\ttau_3$ is given by
\[
x_1 = u_1 ,\quad x_2 = u_2  ,\quad y= u_2^{\lambda}v.
\]
It follows that, in these coordinates, $\g_3 = V(u_1,v)$ and $\mathcal{I}_3$ has the form
\[
\cI_3 = \pmb{u}^{\pmb{\delta}} (u_2^{r_{j_1}}v^{j_1}, \pmb{u}^{\pmb{r}_{ij}}v^{j} , \pmb{u}^{\pmb{\beta}}),
\]
for suitable exponents $\pmb{\delta}$, $r_{j_1}$, $\pmb{r}_{ij}$ and $\pmb{\beta}$
(where we use notation unchanged from before for simplicity). Since $\mathcal{I}_3$ is principal outside 
the preimage of $p_2$ (in the preimage of $a$), we have $\beta_2 =0$ and 
\begin{equation}\label{eq:I3}
\cI_3 = \pmb{u}^{\pmb{\delta}} (u_2^{r_{j_1}}v^{j_1}, \pmb{u}^{\pmb{r}_{ij}}v^{j}, u_1^{\beta_1}).
\end{equation}

\medskip\noindent
\emph{Step 4. Decreasing $\rho$ in the preimage(s) of the point(s) $p_2^{(k)}$.}
Let $W$ denote the complement of the curve(s) $\gamma_3^{(k)}$ in $U_3$. Then the ideal 
$\mathcal{J}_3\cdot \mathcal{O}_{W}$ has support in the preimage of $a$. There is a permissible sequence 
$\ttau: (\widetilde{W},\widetilde{E}) \to (W,E_3|_W)$ of blowings-up that are combinatorial with
respect $D_3|_W$, which principalizes $\mathcal{J}_3\cdot \mathcal{O}_{W}$. By Proposition \ref{prop:declared},
$\rho < \rho(a)$ throughout $\widetilde{W}$. Since all blowings-up are combinatorial with respect to $D_3|_W$,
we can take the closures of all centres to get a sequence of permissible blowings-up 
\[
\ttau_4 : (U_4,E_4) \to (U_3,E_3),
\]
where $\rho(b) < \rho(a)$ for every $b$ not in the preimage of $p_3^{(k)}$, and all blowings-up are combinatorial 
with respect to $D_3$. 

We define $\mathcal{I}_4$ as before. We claim that $\rho < \rho(a)$ in the preimage of $p_3 = p_3^{(k)}$, as required
to finish the proof. By \eqref{eq:I3}, the centres of all blowings-up in $\ttau_4$ containing $p_3$ (or the analogous
limit point after blowing up) are of the form 
$V(u_1,u_2)$, and they must principalize the ideal
\[
\mathcal{K} = ( u_2^{r_{j_1}}, \pmb{u}^{\pmb{r}_{ij}}, u_1^{\beta_1}).
\]
Let $b$ denote a point in the preimage of $p_3$. There are two possible cases.

\medskip\noindent
\emph{Case I. $b$ is a 2-point.} Then there are coordinates $(x_1,x_2,y)$ at $b$ in which $\ttau_4$ is given by
\[
u_1 = \pmb{x}^{\pmb{\lambda}_1} ,\quad u_2 = \pmb{x}^{\pmb{\lambda}_2}  ,\quad v=y.
\]
Since $\ttau_4^{\ast}\mathcal{K}$ is principal, there exists $j_2 \leq j_1$ such that 
\[
\cI_4 = \pmb{u}^{\pmb{\tilde{\delta}}} ( y^{j_2}, \pmb{u}^{\pmb{\tilde{r}}_{ij}}y^{j}, \pmb{u}^{\pmb{\tilde{\beta}}}),
\]
for suitable $\pmb{\tilde{\delta}}$, $\pmb{\tilde{r}}_{ij}$, $\pmb{\tilde{\beta}}$. Therefore, $\rho(b)<\rho(a)$. 
In particular, if $j_2 = 0$, then $\mathcal{I}_4$ is principal.

\medskip\noindent
\emph{Case II. $b$ is a 1-point.} Then there are coordinates $(x,y,z)$ at $b$ in which $\ttau_4$ is given by
\[
u_1 = x^{\lambda_1} ,\quad u_2 = x^{\lambda_2}(\zeta + z),\quad v=y.
\]
Since $\ttau_4^{\ast}\mathcal{K}$ is principal, there exists $j_2 \leq j_1$ such that 
\[
\cI_4 = x^{\tilde{\delta}} ( y^{j_2}, x^{\tilde{r}_{ij}}y^{j}, x^{\tilde{\beta}})
\]
for suitable $\tilde{\delta}$, $\tilde{r}_{ij}$, $\tilde{\beta}$. Therefore, $\rho(b)<\rho(a)$. In particular, if $j_2 = 0$, 
then $\mathcal{I}_4$ is principal.

\end{proof}

\begin{lemma}\label{lem:decrho3}
Let $a$ be a non-generic 1-point of $\Sigma$. Then there is a finite sequence of permissible blowings-up 
$\tau: (\widetilde{U},\widetilde{E}) \to (U,E)$ which is combinatorial with respect to $D$, such that $\rho(b)<\rho(a)$ 
for all $b$ in $\widetilde{U}$. Moreover, $\tau$ can be realized as a composite 
$\tau = \tau_3\circ \tau_2\circ \tau_1$, where $\tau_1$ is a single blowing-up with centre $a$, $\tau_2$
is the composite of a finite sequence of permissible blowings-up as in Lemma \ref{lem:decrho}(2), and $\tau_3$ is
the composite of a finite sequence of blowings-up over $a$.
\end{lemma}

\begin{proof}
The proof consists of four steps, as for Lemma \ref{lem:decrho2}.

\medskip\noindent
\emph{Step 1. Decreasing $\rho$ outside the preimage of $a$.} Let $\g$ and $\de$ denote the curves
$(v=u=0)$ and $(v=w=0)$, respectively. Then $\g$ coincides with $\Sigma$, since $\de \setminus \{a\}$
lies outside $\supp E$. We first blow up with centre $a$ to separate $\g$ and $\de$, and define the
morphism $\ttau_1$ as in the proof of Lemma \ref{lem:decrho2} (so that $\ttau_1$ consists of blowings-up
over $\g$).

Set $\mathcal{I}_1 := \ttau_1^{\ast}\mathcal{I}$. Let $H_1$ and $K_1$ denote the strict transforms of 
$H := (v=0)$ and $K := (w=0)$ by $\ttau_1$ (respectively), and let
$\mathcal{J}_1$ be the residual ideal sheaf of $\cI_1$ after factoring out the greatest possible monomial 
in local generators of components of the exceptional divisor $E_1$. By Lemma \ref{lem:decrho1}, 
$V(\mathcal{J}_1) \subset \ttau_1^{-1}(a) \cup (E_1 \cap H_1) \cup (K_1 \cap H_1)$. The closure
$\Gamma_1$ of $V(\mathcal{J}_1)\setminus \ttau^{-1}_1(a)$ is a union of two curves $\g_1 = E_1 \cap H_1$
(analogous to $\g_1^{(1)}$ in the proof of Lemma \ref{lem:decrho2}) and $\de_1 = K_1 \cap H_1$, which intersect
$\ttau^{-1}_1(a)$ in distinct points $p_1$ and $q_1$.

\medskip\noindent
\emph{Step 2. Decreasing $\rho$ at the limit points $p_1,\, q_1$ of the curves $\g_1,\, \de_1$.} 
For $p_1$, we can repeat the argument of Lemma \ref{lem:decrho2}, Step 2.

Consider $q_1$. First note that, if $s_{ij} = 0$ for some $i,j$ (i.e., if $w$ does not appear in the monomial
part of the coefficient of $v^j$, for some $i$ and some $j< d$), then we can again repeat the argument
of Lemma \ref{lem:decrho2}, Step 2, thinking of $(u,w)$ here as $(u_2, u_1)$ in Lemma \ref{lem:decrho2}.
In this case, we can finish the proof of Lemma \ref{lem:decrho3} as in Lemma \ref{lem:decrho2}.

It may, however, happen that $s_{ij} > 0$ for all $i,j$. In this case, after finitely many blowings-up of $q_1$ and
its preimages in successive liftings of $\de_1$, we get $\de_2$ and $q_2$ with the property that there
is a coordinate system $(x,y,z)$ at $q_2$, such that $E_2 = (x=0)$, $\de_2 = V(y,z)$ and 
$$
\cJ_2 = (x^{r_d}y^d,\, x^{\tilde{r}_{ij}}z^{s_{ij}}y^j,\, z) = (x^{r_d}y^d,\, z),
$$
for suitable $r_d,\, \tilde{r}_{ij}$, and it follows that $\rho(q_2) = 0$. (We define $D_2$, $\cI_2$ and $\cJ_2$ as
in Lemma \ref{lem:decrho2}.)

\medskip\noindent
\emph{Step 3. Decreasing $\rho$ over $a$, outside the preimages of $p_2,\, q_2$.} As in the proof of
Lemma \ref{lem:decrho2}, we define $\ttau_3 : (U_3,E_3) \to (U_2,E_2)$,
where $\rho(b) < \rho(a)$ for every $b$ not in the preimages of $p_2, q_2$. We define 
$D_3$, $\mathcal{I}_3$, $\mathcal{J}_3$, $p_3,\, q_3$, $\gamma_3$ and $\de_3$ as before.
At $p_3$, we can compute $\cI_3$ or $\cJ_3$ as in the proof of Lemma \ref{lem:decrho2}.

At $q_3$, there is a coordinate system $(u,v,w)$ in which $\ttau_3$ is given by
\[
x=u,\quad y=v,\quad z = u^{r_d}w,
\]
and in which $\de_3 = V(v,w)$ and 
\begin{equation}\label{eq:NG3}
\cJ_3 = (v^d, w).
\end{equation}
It follows that $\rho(q_3) =0$ and that, if $W$ denotes the complement of the curves $\gamma_3$, $\de_3$
in $U_3$, then $\cJ_3\cdot\cO_W$ has support disjoint from a neighbourhood $V$ of $\de_3$.

\medskip\noindent
\emph{Step 4. Decreasing $\rho$ in the preimages of $p_2,\, q_2$.} We define
$\ttau_4 : (U_4,E_4) \to (U_3,E_3)$ as in the proof of Lemma  
\ref{lem:decrho2}. Then, as in the latter, $\rho < \rho(a)$ outside the preimages of $p_3,\ q_3$,
and, moreover, $\rho < \rho(a)$ in the preimage of $p_3$. Since $\ttau_4$ is an isomorphism over
$V$, we have $\rho < \rho(a)$ on $U_4$.
\end{proof}

This completes the proof of Lemma \ref{lem:decrho}.

\subsection{Proof of Proposition \ref{prop:declared}}
Consider $\tau: (\widetilde{U},\widetilde{D}) \to (U,D)$. In each case in Definitions \ref{def:declared},
$\supp \cI$ has codimension at least two; therefore, $\cosupp \tau^*(\cI) \subset \supp \tE$
and $\tau^*(\cI)$ is $\tE$-principal. 
The proof of the proposition will be divided in three cases depending on the nature of $a$.

\medskip\noindent
\emph{Case I. $a$ is a generic 1-point.}  Then $D = (uv = 0)$. We consider two cases depending on whether 
$b$ is a 1- or 2-point \emph{with respect to} $\widetilde{D}$:

\medskip\noindent
\emph{Subcase I.1. $b$ is a 1-point of $\widetilde{D}$.} There are adapted coordinates $(x,y,z)$ at $b$, 
where $\widetilde{D} = (x=0)$ and $\tau$ is given by
$$
u =x^{\lambda_1}, \quad v = x^{\lambda_2}(\zeta + z),\quad  w=y,
$$
where $\zeta \neq 0$. Clearly, $\widetilde{E} = \widetilde{D}$ at $b$, since $\supp \widetilde{E} = V(\tau^*(u))$.
By $\eqref{eq:weier}$, $\eqref{eq:norm1}$, each
\begin{equation}\label{eq:I.1}
\tau^{\ast}T_i = x^{\tilde{r}_d} (\zeta+z)^d \overline{T}_i + \sum_{j=1}^{d-1}  x^{\tilde{r}_{ij}}  (\zeta+z)^j \tb_{ij} + b_{i0},
\end{equation}
for suitable $\tilde{r}_d$, $\tilde{r}_{ij}$, where $\tb_{ij} = \tb_{ij}(x,y) = \tau^*\ta_{ij}$, 
$\overline{T}_i = \tau^*\widetilde{T}_i$ and 
$b_{i0} = \tau^*a_{i0}$; in particular, $b_{i_0,0} = x^{\tilde{\beta}} y$, for some $\tilde{\beta}$. All the exponents can be
computed explicitly from the $\la_i$ and the original $r_{ij}$, $\beta$, but we will not need the explicit formulas.
Let $x^{\tilde{\gamma}}$ denote a generator of $\tau^*\mathcal{I}$. Then \eqref{eq:prelim} becomes
\begin{equation}\label{eq:I.1sigma}
\begin{aligned}
\sigma_1 & = x^{\tilde{\alpha}},\\
\sigma_i & = g_i(x) + x^{\tilde{\delta} + \tilde{\gamma}} \frac{\tau^*T_i}{x^{\tilde{\gamma}}},
\end{aligned}
\end{equation}
for suitable $\tilde{\alpha}$, $\tilde{\delta}$. We now consider three further subcases depending on which 
monomial generator of $\mathcal{I}$ pulls back to a generator of $\tau^*\mathcal{I}$.

\medskip\noindent
\emph{Subcase I.1.1. $\tau^*(v^d)$ generates $\tau^*\mathcal{I}$.} Then
\[
\frac{\tau^*T_2}{x^{\tilde{\gamma}}} =  (\zeta +z)^d \overline{T}_2 
+ \frac{1}{x^{\tga}}\left(\sum_{j=1}^{d-2} x^{ \tilde{r}_{2j}} (\zeta +z)^j \tb_{2j}  + b_{20}\right),
\]
where $\zeta \neq 0$, $\overline{T}_2$ is a unit, and $\tb_{2j},\, b_{20}$ are independent of $z$. It follows
that $d(b) < d(a)$, so that $\rho(b)<\rho(a)$.

\medskip\noindent
\emph{Subcase I.1.2. $\tau^*(v^d)$ does not generate $\tau^*\mathcal{I}$, but $\tau^*(u^{r_{ij}}v^{j})$ 
generates $\tau^*\mathcal{I}$, for some $(i,j)$.} Let $(i_1,j_1)$ denote such $(i,j)$ with maximal $j$. Then
\[
\frac{\tau^{\ast}T_{i_1}}{x^{\tilde{\gamma}}} = (\zeta +z)^{j_1} \tb_{i_1,j_1} 
+ \frac{1}{x^{\tga}}\left(\sum_{j=1}^{j_1-1} x^{\tilde{r}_{i_1,j}} (\zeta +z)^j \tb_{i_1,j}  + b_{i_1,0} \right) + x R(x,y,z).
\]
Since the $\tb_{i_1,j}$ and $b_{i_1,0}$ are independent of $z$, $d(b) \leq j_1$, so that $\rho(b)<\rho(a)$.

\medskip\noindent
\emph{Subcase I.1.3. Neither $\tau^*(v^d)$ nor any $\tau^*(u^{r_{ij}}v^{j})$ generates $\tau^*\mathcal{I}$.}
Then $\tau^*(u^{\beta})$ generates $\tau^{\ast}\mathcal{I}$. In this case, 
\[
\tau^*T_{i_0}  = x^{\tilde{\gamma}} \left( y + x R(x,y,z)\right),
\]
so that $\rho(b)= 0 <\rho(a)$.

\medskip\noindent
\emph{Subcase I.2. $b$ is a 2-point of $\widetilde{D}$:} There are adapted coordinates $(\pmb{x},y)=(x_1,x_2,y)$ at $b$
such that $\widetilde{D} = (x_1x_2=0)$ and $\tau$ is given by
\[
u = \pmb{x}^{\pmb{\lambda}_1}, \quad v = \pmb{x}^{\pmb{\lambda}_2}, \quad w=y,
\]
where $\pmb{\lambda}_1,\pmb{\lambda}_2$ are $\mathbb{Q}$-linearly independent. By $\eqref{eq:weier}$, 
$\eqref{eq:norm1}$,
\begin{equation}\label{eq:I.2}
\tau^{\ast}T_i = \pmb{x}^{\pmb{\tilde{r}}_d} \overline{T}_{i} + \sum_{j=1}^{d-1}  \pmb{x}^{\pmb{\tilde{r}}_{ij}} \tb_{ij} + b_{i0},
\end{equation}
for suitable $\pmb{\tilde{r}}_d$, $\pmb{\tilde{r}}_{i,j}$, where $\tb_{ij} = \tau^*\ta_{ij}$, 
$\overline{T}_i = \tau^*\widetilde{T}_i$ and 
$b_{i0}= \tau^*a_{i0}$; in particular, $b_{i_0,0} = \pmb{x}^{\pmb{\tilde{\beta}}} y$, for some $\pmb{\tilde{\beta}}$.  Again
all exponents can be computed explicitly from the $\pmb{\lambda}_i$ and the original exponents. In particular, since 
$\pmb{\lambda}_1$, $\pmb{\lambda}_2$ are linearly independent, the multi-indices $\pmb{\tilde{r}}_d$, $\pmb{\tilde{r}}_{i,j}$ (for fixed $i$) and $\pmb{\tilde{\beta}}$ are all distinct. Let $\pmb{x}^{\pmb{\tilde{\gamma}}}$ be a generator of 
$\tau^*\mathcal{I}$. Then \eqref{eq:prelim} becomes
\begin{equation}
\begin{aligned}
\sigma_1 & = \pmb{x}^{\pmb{\tilde{\alpha}}}, \\
\sigma_i & = g_i(\pmb{x}) + \pmb{x}^{\pmb{\tilde{\delta}} + \pmb{\tilde{\gamma}}} 
\frac{\tau^*T_i}{\pmb{x}^{\pmb{\tilde{\gamma}}}},
\end{aligned}
\label{eq:LocalI2prelim}
\end{equation}
for some $\pmb{\tilde{\alpha}}$, $\pmb{\tilde{\delta}}$. Note that $\pmb{x}^{\pmb{\tilde{\alpha}}}$ and
$\pmb{x}^{\pmb{\tilde{\delta}} + \pmb{\tilde{\gamma}}}$ are supported in $\widetilde{E}$. We again consider
three further subcases depending on the generator of $\tau^*\mathcal{I}$.

\medskip\noindent
\emph{Subcase I.2.1. $\tau^*(v^d)$ generates $\tau^*\mathcal{I}$.} Then
\[
\tau^*T_2  = \pmb{x}^{\pmb{\tilde{\gamma}}} \left( \overline{T}_2 + R(\pmb{x},y) \right),
\]
where $R(0,y) = 0$. Moreover, $\pmb{\tilde{\delta}} + \pmb{\tilde{\gamma}}=  \delta \pmb{\lambda}_1 + d \pmb{\lambda}_2$
and $\pmb{\tilde{\alpha}} = \alpha \pmb{\lambda}_1$; therefore, $\pmb{\tilde{\delta}} + \pmb{\tilde{\gamma}}$ and 
$\pmb{\tilde{\alpha}}$ are linearly independent. By \eqref{eq:LocalI2prelim}, $\rho(b) = 0 < \rho(a)$.

\medskip\noindent
\emph{Subcase I.2.2.  $\tau^*(v^d)$ does not generate $\tau^*\mathcal{I}$, but $\tau^*(u^{r_{ij}}v^{j})$ 
generates $\tau^*\mathcal{I}$, for some $(i,j)$.} Let $(i_1,j_1)$ denote such $(i,j)$ with maximal $j$. Then
\[
\tau^*T_{i_1}  = \pmb{x}^{\pmb{\tilde{\gamma}}} \left( b_{i_1,j_1} + R(\pmb{x},y) \right),
\]
where $R(0,y) = 0$. Moreover, $\pmb{\tilde{\delta}} + \pmb{\tilde{\gamma}}=  (\delta+r_{i_1,j_1}) \pmb{\lambda}_1 +j_1  \pmb{\lambda}_2$ and $\pmb{\tilde{\alpha}} = \alpha \pmb{\lambda}_1$; therefore, 
$\pmb{\tilde{\delta}} + \pmb{\tilde{\gamma}}$ and $\pmb{\tilde{\alpha}}$ are linearly independent. By 
\eqref{eq:LocalI2prelim}, $\rho(b) = 0 < \rho(a)$.

\medskip\noindent
\emph{Subcase I.2.3. $\tau^*(u^{\beta})$ is the only generator of $\tau^*\mathcal{I}$.} Then
\[
\tau^*T_{i_0}  = \pmb{x}^{\pmb{\tilde{\gamma}}} \left( y + R(\pmb{x},y) \right),
\]
where $R(0,y) = 0$, so that $d(b) = 1< d(a)$ and $\rho(b)<\rho(a)$.

\medskip\noindent
\emph{Case II. $a$ is a non-generic 1-point.} Then $D = (uvw = 0)$. We consider three cases depending on 
whether $b$ is a 1-, 2- or 3-point of $\widetilde{D}$.

\medskip\noindent 
\emph{Subcase II.1. $b$ is a 1-point of $\widetilde{D}$.} We follow the steps of Subcase I.1. There are adapted 
coordinates $(x,y,z)$ at $b$, where $\widetilde{D} = (x=0)$ and $\tau$ is given by
\[
u = x^{\lambda_1},\quad w = x^{\lambda_2}(\eta+y) ,\quad v=x^{\lambda_3}(\zeta+z).
\]
where $\eta \neq 0$, $\zeta \neq 0$. Again $\widetilde{E} = \widetilde{D}$ at $b$, since $\supp \widetilde{E} = 
V(\tau^*(u))$. By $\eqref{eq:weier}$ and $\eqref{eq:norm1}$, we again have formulas \eqref{eq:I.1},
for suitable $\tilde{r}_d$, $\tilde{r}_{ij}$, where now $\tb_{ij} = \tb_{ij}(x,y) = \tau^*\ta_{ij}$ times a unit, 
$\overline{T}_i = \tau^*\widetilde{T}_i$ and 
$b_{i0} = \tau^*a_{i0}$; in particular, $b_{i_0,0} = x^{\tilde{\beta}} (\eta +y)$, for some $\tilde{\beta}$. 
Let $x^{\tilde{\gamma}}$ be a generator of $\tau^*\mathcal{I}$. Again $\s$ has the form \eqref{eq:I.1sigma},
for suitable $\tilde{\alpha}$, $\tilde{\delta}$, and we consider three further subcases depending on the
generator of $\tau^*\mathcal{I}$.

\medskip\noindent
\emph{Subcase II.1.1. $\tau^*(v^d)$ generates $\tau^*\mathcal{I}$.} In this case, we can repeat
Subcase I.1.1 word-for-word. 

\medskip\noindent
\emph{Subcase II.1.2. $\tau^*(v^d)$ does not generate $\tau^*\mathcal{I}$, but some $\tau^*(u^{r_{ij}}w^{s_{ij}}v^{j})$ 
generates $\tau^*\mathcal{I}$.} Then we can repeat Subcase I.1.2 word-for-word.

\medskip\noindent
\emph{Subcase II.1.3. $\tau^*(u^{\beta}w)$ is the only generator of $\tau^*\mathcal{I}$.} Then
\[
\tau^*T_{i_0}  = x^{\tilde{\gamma}} \left((\eta + y) + x R(x,y,z)\right),
\]
so that $\rho(b)= 0 <\rho(a)$.

\medskip\noindent
\emph{Subcase II.2. $b$ is a $2$-point of $\widetilde{D}$.} Then there are two possibilities:
\begin{enumerate}
\item[(a)] There are adapted coordinates $(\pmb{x},z) = (x_1,x_2,z)$ 
at $b$ such that $\widetilde{D} = (x_1x_2=0)$ and $\tau$ is given by 
\begin{equation*}
u = \pmb{x}^{\pmb{\lambda}_1},\quad w= \pmb{x}^{\pmb{\lambda}_2},\quad v = \pmb{x}^{\pmb{\lambda}_3}(\zeta+z),
\end{equation*}
where $\pmb{\lambda}_1, \pmb{\lambda}_2$ are $\mathbb{Q}$-linearly independent and $\zeta\neq 0$.
\smallskip
\item[(b)] There are adapted coordinates $(\pmb{x},y) = (x_1,x_2,y)$
such that $\widetilde{D} = (x_1x_2=0)$ and $\tau$ is given by 
\begin{equation*}
u = \pmb{x}^{\pmb{\lambda}_1}, \quad w= \pmb{x}^{\pmb{\lambda}_2}(\eta+y),\quad v = \pmb{x}^{\pmb{\lambda}_3},
\end{equation*}
where $\pmb{\lambda}_1, \pmb{\lambda}_3$ are linearly independent, 
$\pmb{\lambda}_1, \pmb{\lambda}_2$ are linearly dependent, and $\eta\neq 0$. 
\end{enumerate}
We have to consider both (a) and (b).

\medskip\noindent
\emph{Subcase II.2(a).} By $\eqref{eq:weier}$, $\eqref{eq:norm1}$,
\begin{equation}\label{eq:II.2(a)}
\tau^{\ast}T_i = \pmb{x}^{\pmb{\tilde{r}}_d} (\zeta+z)^d \overline{T}_i 
+ \sum_{j=1}^{d-1}  \pmb{x}^{\pmb{\tilde{r}}_{ij}}  (\zeta+z)^j \tb_{ij} + b_{i0},
\end{equation}
for suitable $\pmb{\tilde{r}}_d$, $\pmb{\tilde{r}}_{ij}$, where $\tb_{ij} = \tb_{ij}(\bx) = \tau^{\ast}\ta_{ij}$,
$\overline{T}_2 = \tau^*\widetilde{T}_i$, and $b_{i0} = \tau^*a_{i0}$; in particular, $b_{i_0,0} = \pmb{x}^{\pmb{\tilde{\beta}}}$, for some $\pmb{\tilde{\beta}}$. Let $\pmb{x}^{\pmb{\tilde{\gamma}}}$ be a generator of $\tau^{\ast}\mathcal{I}$. Then
$\s$ takes the form \eqref{eq:LocalI2prelim}, for suitable $\pmb{\tilde{\alpha}},\, \pmb{\tilde{\delta}}$. 
Note that $\pmb{x}^{\pmb{\tilde{\alpha}}},  \pmb{x}^{\pmb{\tilde{\delta}}}$ and $\pmb{x}^{\pmb{\tilde{\gamma}}}$ are supported
in $\widetilde{E}$. As before, we consider three further subcases depending on which monomial generator of 
$\mathcal{I}$ pulls back to a generator of $\tau^{\ast}\mathcal{I}$.

\medskip\noindent
\emph{Subcase II.2.1(a). $\tau^*(v^d)$ generates $\tau^*\mathcal{I}$.} This subcase is similar to I.1.1 and II.1.1. We have
\[
\frac{\tau^{\ast}T_2}{\pmb{x}^{\pmb{\tilde{\gamma}}}} = (z+\zeta)^d \overline{T}_2 + \frac{1}{\pmb{x}^{\pmb{\tilde{\gamma}}}}\left(\sum_{j=1}^{d-2} \pmb{x}^{ \pmb{\widetilde{r}}_{2j}} (z+\zeta)^j \tb_{2j}  + b_{20}\right),
\]
where $\zeta \neq 0$, $\overline{T}_2$ is a unit and $\tb_{2j},\, b_{20}$ are independent of $z$. It follows that
$d(b) <  d(a)$, so $\rho(b)<\rho(a)$.

\medskip\noindent
\emph{Subcase II.2.2(a). $\tau^*(v^d)$ does not generate $\tau^*\mathcal{I}$, but 
$\tau^*(u^{r_{i_1,j_1}}w^{s_{i_1,j_1}}v^{j_1})$ (with maximal $j_1$) generates $\tau^*\mathcal{I}$.} As in I.1.2 and II.1.2,
\[
\frac{\tau^{\ast}T_{i_1}}{\pmb{x}^{\pmb{\tilde{\gamma}}}} = (z+\zeta)^{j_1} \tb_{i_1,j_1} 
+ \frac{1}{\pmb{x}^{\pmb{\tilde{\gamma}}}}\left(\sum_{j=1}^{j_1-1} \pmb{x}^{\pmb{\widetilde{r}}_{i_1,j}} (z+\zeta)^j \tb_{i_1,j}  + b_{i_1,0} \right) + \pmb{x} R(\pmb{x},z),
\]
where the $\tb_{i_1,j}$ and $b_{i_1,0}$ are independent of $z$. Therefore,
$d(b) \leq j_1$, so that $\rho(b)<\rho(a)$.

\medskip\noindent
\emph{Subcase II.2.3(a). $\tau^*(u^{\beta}w)$ is the only generator of $\tau^*\mathcal{I}$.} This is similar to
I.2.1 or I.2.2. We have
\[
\tau^{\ast}T_{i_0}  = \pmb{x}^{\pmb{\tilde{\gamma}}} \left( 1 +  R(\pmb{x},z)\right),
\]
where $R(0,z) = 0$. Moreover, $\pmb{\tilde{\delta}} + \pmb{\tilde{\gamma}}
=  (\delta+\beta) \pmb{\lambda}_1 +  \pmb{\lambda}_2$ and $\pmb{\tilde{\alpha}} = \alpha \pmb{\lambda}_1$, so 
that $\pmb{\tilde{\delta}} + \pmb{\tilde{\gamma}}$ and $\pmb{\tilde{\alpha}}$ are linearly independent. Since 
$\pmb{x}^{\pmb{\tilde{\gamma}}}$ is supported in $\widetilde{E}$, $\rho(b) = 0 < \rho(a)$.

\medskip\noindent
\emph{Subcase II.2(b).} We follow the steps of Subcase I.2. By $\eqref{eq:weier}$, $\eqref{eq:norm1}$,
we have formulas \eqref{eq:I.2},
for suitable $\pmb{\tilde{r}}_d$, $\pmb{\tilde{r}}_{ij}$, where $\tb_{ij} = \tau^{\ast}\ta_{ij}$ times a unit, 
$\overline{T}_i = \tau^{\ast}\widetilde{T}_i$ and $b_{i0}= \tau^*a_{i0}$; in particular, 
$b_{i_0,0} = \pmb{x}^{\pmb{\tilde{\beta}}}(\eta + y)$ for suitable $\tilde{\beta}$. For fixed $i$, since $\pmb{\lambda}_1,\,\pmb{\lambda}_3$ are linearly independent, the exponents $\pmb{\tilde{r}}_d$, $\pmb{\tilde{r}}_{ij}$ and 
$\pmb{\tilde{\beta}}$ are distinct. Let $\pmb{x}^{\pmb{\tilde{\gamma}}}$ denote a generator of $\tau^*\mathcal{I}$.
Then \eqref{eq:prelim} takes the form \eqref{eq:LocalI2prelim},
for suitable $\pmb{\tilde{\alpha}},\, \pmb{\tilde{\delta}}$. Note that $\pmb{x}^{\pmb{\tilde{\alpha}}}$ and 
$\pmb{x}^{\pmb{\tilde{\delta}} + \pmb{\tilde{\gamma}}}$ are supported in $\widetilde{E}$. We consider three further 
subcases II.2.1(b), II.2.2(b) and II.2.3(b) analogous to I.2.1, I.2.2 and I.2.3, respectively, in each of which we can
argue word-for-word as in the latter.

\medskip\noindent
\emph{Subcase II.3. $b$ is a $3$-point of $\widetilde{D}$.} We follow the steps of Subcase I.2. 
There are adapted coordinates $\pmb{x}=(x_1,x_2,x_3)$ at $b$, such that $\widetilde{D} = (x_1x_2x_3=0)$ and
$\tau$ is given by
\[
u = \pmb{x}^{\pmb{\lambda}_1},\quad w=\pmb{x}^{\pmb{\lambda}_2} ,\quad v = \pmb{x}^{\pmb{\lambda}_3},
\]
where $\pmb{\lambda}_1,\pmb{\lambda}_2,\pmb{\lambda}_3$ are $\mathbb{Q}$-linearly independent. By
$\eqref{eq:weier}$, $\eqref{eq:norm1}$, we again have formulas \eqref{eq:I.2} (here of course
$\pmb{x}=(x_1,x_2,x_3)$),
for suitable $\pmb{\tilde{r}}_d$, $\pmb{\tilde{r}}_{ij}$, where $\tb_{ij} = \tau^{\ast}\ta_{ij}$, 
$\overline{T}_i = \tau^{\ast}\widetilde{T}_i$ and $b_{i0} = \tau^*a_{i0}$; in particular, 
$b_{i_0,0} = \pmb{x}^{\pmb{\tilde{\beta}}}$ for suitable $\pmb{\tilde{\beta}}$. For fixed $i$, since 
$\pmb{\lambda}_1,\, \pmb{\lambda}_2,\, \pmb{\lambda}_3$ are $\mathbb{Q}$-linearly independent, 
the expoments $\pmb{\tilde{r}}_d$, $\pmb{\tilde{r}}_{ij}$ and $\pmb{\tilde{\beta}}$ are distinct. Let $\pmb{x}^{\pmb{\tilde{\gamma}}}$ be a generator of $\tau^*\mathcal{I}$. Then equation \eqref{eq:prelim} 
takes the form \eqref{eq:LocalI2prelim},
for suitable $\pmb{\tilde{\alpha}},\, \pmb{\tilde{\delta}}$, where $\pmb{x}^{\pmb{\tilde{\alpha}}}$,
$\pmb{x}^{\pmb{\tilde{\delta}} + \pmb{\tilde{\gamma}}}$ are supported in $\widetilde{E}$. We consider three subcases
as before.

\medskip\noindent
\emph{Subcase II.3.1. $\tau^{\ast}(v^d) = \pmb{x}^{d \pmb{\lambda}_2}$ generates $\tau^{\ast}\mathcal{I}$.} 
Then
\[
\tau^{\ast}T_2  = \pmb{x}^{\pmb{\tilde{\gamma}}} \left( \overline{T}_2 + R(\pmb{x}) \right),
\]
where $R(0) = 0$. Moreover, $\pmb{\tilde{\delta}} + \pmb{\tilde{\gamma}}=  \delta \pmb{\lambda}_1 + d \pmb{\lambda}_3$ 
and $\pmb{\tilde{\alpha}} = \alpha \pmb{\lambda}_1$, so that $\pmb{\tilde{\delta}} + \pmb{\tilde{\gamma}}$,
$\pmb{\tilde{\alpha}}$ are linearly independent. By \eqref{eq:LocalI2prelim}, $\rho(b) = 0 < \rho(a)$.

\medskip\noindent
\emph{Subcase II.3.2. $\tau^{\ast}(v^d)$ does not generate $\tau^{\ast}\mathcal{I}$, but 
$\tau^{\ast}(u^{r_{i_1,j_1}}w^{s_{i_1,j_1}}v^{j_1})$ (with maximal $j_1$) generates $\tau^{\ast}\mathcal{I}$.} Then
\[
\tau^{\ast}T_{i_1}  = \pmb{x}^{\pmb{\tilde{\gamma}}} \left( b_{i_1,j_1} + R(\pmb{x}) \right),
\]
where $R(0) = 0$. Moreover, $\pmb{\tilde{\delta}} + \pmb{\tilde{\gamma}}=  (\delta+r_{i_1,j_1}) \pmb{\lambda}_1 +s_{i_1,j_1} \pmb{\lambda}_2 + j_1  \pmb{\lambda}_3$ and $\pmb{\tilde{\alpha}} = \alpha \pmb{\lambda}_1$, so that 
$\pmb{\tilde{\delta}} + \pmb{\tilde{\gamma}}$, $\pmb{\tilde{\alpha}}$ are linearly independent. By
\eqref{eq:LocalI2prelim}, $\rho(b) = 0 < \rho(a)$.

\medskip\noindent
\emph{Subcase II.3.3. $\tau^{\ast}(u^{\beta})$ is the only generator of $\tau^{\ast}\mathcal{I}$.} Then
\[
\tau^{\ast}T_{i_0}  = \pmb{x}^{\pmb{\tilde{\gamma}}} \left( 1 + R(\pmb{x}) \right),
\]
where $R(0) = 0$. Moreover, 
$\pmb{\tilde{\delta}} + \pmb{\tilde{\gamma}}=  (\delta+\beta) \pmb{\lambda}_1 + \pmb{\lambda}_2 $ and $\pmb{\tilde{\alpha}} = \alpha \pmb{\lambda}_1$, so that $\pmb{\tilde{\delta}} + \pmb{\tilde{\gamma}}$, $\pmb{\tilde{\alpha}}$ are linearly independent. By \eqref{eq:LocalI2prelim}, $\rho(b) = 0 < \rho(a)$.

\medskip\noindent
\emph{Case III. $a$ is a 2-point.} Then $D:= (u_1u_1v = 0)$. Since $\tau$ is combinatorial in respect to $D$, 
we consider three cases depending on whether $b$ is a 1-, 2- or 3-point of $\widetilde{D}$:

\medskip\noindent
\emph{Subcase III.1. $b$ is a 1-point of $\widetilde{D}$.} We follow the steps of Subcase I.1. There
are adapted coordinates $(x,y,z)$ at $b$, where $\widetilde{D} = (x=0)$ and $\tau$ is given by
\[
u_1 = x^{\lambda_1}(\eta+y)^{\alpha_2},\quad u_2 = x^{\lambda_2}(\eta+y)^{-\alpha_1} ,\quad v=x^{\lambda_3}(\zeta+z),
\]
where $\eta \neq 0$, $\zeta \neq 0$. Clearly, $\widetilde{E} = \widetilde{D}$ at $b$, since 
$\supp \widetilde{E} = V(\tau^{\ast}\pmb{u}^{\pmb{\alpha}})$. By $\eqref{eq:weier}$, $\eqref{eq:norm1}$,
we have formulas \eqref{eq:I.1},
for suitable $\tilde{r}_d$, $\tilde{r}_{ij}$, where $\tb_{ij} = \tb_{ij}(x,y) = \tau^{\ast}\ta_{ij}$ times a unit,
$\overline{T}_i = \tau^{\ast}\widetilde{T}_i$, and $b_{i0} = \tau^*a_{i0}$. Note that
$\tau^{\ast}(\pmb{u}^{\pmb{\delta}}a_{i_0,0}) = x^{\tilde{\beta}+\tilde{\delta}} (\eta +y)^{\tilde{\epsilon}}$ for appropriate 
$\tilde{\delta}$, $\tilde{\beta}$ and $\tilde{\ep}$, where $\tilde{\ep}\neq 0$ since 
$\tilde{\ep} = (\delta_2+\beta_2) \alpha_1 - (\delta_1 + \beta_1) \alpha_2$ and $\pmb{\alpha}$, $\pmb{\delta}+\pmb{\beta}$
are $\mathbb{Q}$-linearly independent. Let $x^{\tilde{\gamma}}$ denote a generator of $\tau^*\mathcal{I}$. Then
$\s$ has the form \eqref{eq:I.1sigma}, and we consider 
three further subcases III.1.1, III.1.2 and III.1.3 as before, depending on which monomial generator of $\cI$ 
pulls back to a generator of $\tau^{\ast}\mathcal{I}$. 

Subcases III.1.1 and III.1.2 parallel I.1.1 and I.1.2 (respectively), word-for-word.

\medskip\noindent
\emph{Subcase III.1.3. $\tau^{\ast}\pmb{u}^{\pmb{\beta}}$ is the only generator of $\tau^{\ast}\mathcal{I}$.} Then
\[
\tau^{\ast}(\pmb{u}^{\pmb{\delta}}T_{i_0})  = x^{\tilde{\gamma}+\tilde{\delta}} \left((\eta + y)^{\tilde{\ep}} + x R(x,y,z)\right);
\]
therefore, $\rho(b)= 0 <\rho(a)$.

\medskip\noindent
\emph{Subcase III.2. $b$ is a 2-point of $\widetilde{D}$.} Then there are two possibilities:
\begin{enumerate}
\item[(a)] There are adapted coordinates $(\pmb{x},z) = (x_1,x_2,z)$ at $b$ such that $\widetilde{D} = (x_1x_2=0)$
and $\tau$ is given by
\begin{equation*}
u_1 = \pmb{x}^{\pmb{\lambda}_1},\quad u_2= \pmb{x}^{\pmb{\lambda}_2},\quad v = \pmb{x}^{\pmb{\lambda}_3}(\zeta+z),
\end{equation*}
where $\pmb{\lambda}_1, \pmb{\lambda}_2$ are linearly independent and $\zeta\neq 0$. In this case, $\widetilde{E} = \widetilde{D}$.
\smallskip
\item[(b)] There are adapted coordinates $(\pmb{x},y) = (x_1,x_2,y)$ at $b$ such that $\widetilde{D} = (x_1x_2=0)$
and $\tau$ is given by
\begin{equation*}
u_1 = \pmb{x}^{\pmb{\lambda}_1}(\eta+y)^{\alpha_2}, \quad u_2= \pmb{x}^{\pmb{\lambda}_2}(\eta+y)^{-\alpha_1},\quad v = \pmb{x}^{\pmb{\lambda}_3},
\end{equation*}
where $\pmb{\lambda}_1,\, \pmb{\lambda}_3$ are linearly independent, $\pmb{\lambda}_1,\, \pmb{\lambda}_2$ are 
linearly dependent, and $\eta\neq 0$.
\end{enumerate}
We again consider both (a) and (b).

\medskip\noindent
\emph{Subcase III.2(a).} By equations $\eqref{eq:weier}$, $\eqref{eq:norm1}$, we have formulas \eqref{eq:II.2(a)},
for suitable $\pmb{\tilde{r}}_d$, $\pmb{\tilde{r}}_{ij}$, where $\tb_{ij} = \tb_{ij}(\bx) = \tau^{\ast}\ta_{i,j}$,
$\overline{T}_i = \tau^{\ast}\widetilde{T}_i$ and $b_{i0} = \tau^*a_{i0}$; in particular, 
$b_{i_0,0} = \pmb{x}^{\pmb{\tilde{\beta}}}$ for suitable $\pmb{\tilde{\beta}}$. Let $\pmb{x}^{\pmb{\tilde{\gamma}}}$ be a generator of $\tau^{\ast}\mathcal{I}$. Then \eqref{eq:prelim} takes the form \eqref{eq:LocalI2prelim},
for suitable $\pmb{\tilde{\alpha}}$, $\pmb{\tilde{\delta}}$. We again consider three further subcases III.2.1(a),
III.2.2(a) and III.2.3(a), depending on which 
monomial generator of $\mathcal{I}$ pulls back to a generator of $\tau^{\ast}\mathcal{I}$. In each subcase,
we can follow the corresponding subcase of II.2(a) essentially word-for-word.

\medskip\noindent
\emph{Subcase III.2(b).} We follow the steps of I.2. By $\eqref{eq:weier}$, $\eqref{eq:norm1}$,
we have formulas \eqref{eq:I.2}, for suitable
$\pmb{\tilde{r}}_d$, $\pmb{\tilde{r}}_{ij}$, where $\tb_{ij} = \tau^{\ast}\ta_{ij}$ times a unit, 
$\overline{T}_i = \tau^{\ast}\widetilde{T}_i$ and $b_{i0} = \tau^*a_{i0}$. Note that 
$\tau^{\ast}(\pmb{u}^{\pmb{\delta}}a_{i_0,0}) = \pmb{x}^{\pmb{\tilde{\delta}}+\pmb{\tilde{\beta}}}(\eta + y)^{\tilde{\ep}}$,
suitable $\pmb{\tilde{\delta}}$, $\pmb{\tilde{\beta}}$ and $\tilde{\ep}$, where $\tilde{\ep} \neq 0$ since $\tilde{\ep} = \alpha_2(\beta_1+\delta_1)-\alpha_1(\delta_2+\delta_1)$ and $\pmb{\alpha}$, $\pmb{\delta}+\pmb{\beta}$ are
linearly independent. Moreover, for each fixed $i$, since $\pmb{\lambda}_1$, $\pmb{\lambda}_3$ are linearly independent, 
the exponents $\pmb{\tilde{r}}_d$, $\pmb{\tilde{r}}_{ij}$ and $\pmb{\tilde{\beta}}$ are distinct. Let $\pmb{x}^{\pmb{\tilde{\gamma}}}$ be a generator of $\tau^*\mathcal{I}$. Then \eqref{eq:prelim} takes the form
\eqref{eq:LocalI2prelim}, for suitable
$\pmb{\tilde{\alpha}}$, $\pmb{\tilde{\delta}}$,  and $\pmb{x}^{\pmb{\tilde{\alpha}}}$,
$\pmb{x}^{\pmb{\tilde{\delta}} + \pmb{\tilde{\gamma}}}$ are supported 
in $\widetilde{E}$. As usual, we consider three subcases. The first two, III.2.1(b) and III.2.2(b), parallel
I.2.1 and I.2.2 (respectively).

\medskip\noindent
\emph{Subcase III.2.3(b). $\tau^{\ast}(\pmb{u}^{\pmb{\beta}})$ is the only generator of $\tau^{\ast}\mathcal{I}$.} Then
\[
\tau^{\ast}\pmb{u}^{\pmb{\delta}}T_{i_0}  = \pmb{x}^{\pmb{\tilde{\delta}} + \pmb{\tilde{\gamma}}} \left( (\eta+ y)^{\tilde{\ep}} + R(\pmb{x},y) \right),
\]
where $R(0,y) = 0$. It follows that $\rho(b) = 0 <\rho(a)$.

\medskip\noindent
\emph{Subcase III.3. $b$ is a 3-point of $\widetilde{D}$.} We can again follow the steps of Subcase I.2.
There are adapted coordinates $\pmb{x}=(x_1,x_2,x_3)$ at $b$, such that $\widetilde{D} = (x_1x_2x_3=0)$ and $\tau$ 
is given by:
\[
u_1 = \pmb{x}^{\pmb{\lambda}_1},\quad u_2=\pmb{x}^{\pmb{\lambda}_2} ,\quad v = \pmb{x}^{\pmb{\lambda}_3},
\]
where $\pmb{\lambda}_1,\, \pmb{\lambda}_2,\, \pmb{\lambda}_3$ are linearly independent. By
$\eqref{eq:weier}$, $\eqref{eq:norm1}$, we have formulas \eqref{eq:I.2}, for suitable
$\pmb{\tilde{r}}_d$, $\pmb{\tilde{r}}_{ij}$, where $\tb_{ij} = \tau^{\ast}\ta_{ij}$, 
$\overline{T}_i = \tau^{\ast}\tilde{T}_i$ and $b_{i0} = \tau^*a_{i0}$; in particular, $b_{i_0,0} = \pmb{x}^{\pmb{\tilde{\beta}}}$ for suitable $\pmb{\tilde{\beta}}$. For each fixed $i$, since $\pmb{\lambda}_1$, $\pmb{\lambda}_2$, $\pmb{\lambda}_3$ are 
linearly independent, the exponents $\pmb{\tilde{r}}_d$, $\pmb{\tilde{r}}_{ij}$ and $\pmb{\tilde{\beta}}$ are distinct. 
Let $\pmb{x}^{\pmb{\tilde{\gamma}}}$ be a generator of $\tau^*\mathcal{I}$. Then \eqref{eq:prelim} takes the
form \eqref{eq:LocalI2prelim}, for suitable
$\pmb{\tilde{\alpha}}$, $\pmb{\tilde{\delta}}$; moreover, $\pmb{x}^{\pmb{\tilde{\alpha}}}$, 
$\pmb{x}^{\pmb{\tilde{\delta}} + \pmb{\tilde{\gamma}}}$ are supported in $\widetilde{E}$. We consider three subcases,
as usual.

\medskip\noindent
\emph{Subcase III.3.1. $\tau^{\ast}(v^d)$ generates $\tau^{\ast}\mathcal{I}$.} Then
\[
\tau^{\ast}T_2  = \pmb{x}^{\pmb{\tilde{\gamma}}} \left( \bar{T}_2 + R(\pmb{x}) \right),
\]
where $R(0) = 0$. Moreover, $\pmb{\tilde{\delta}} + \pmb{\tilde{\gamma}}=  \delta_1 \pmb{\lambda}_1 +\delta_2 \pmb{\lambda}_2 + d \pmb{\lambda}_3$ and $\pmb{\tilde{\alpha}} = \alpha_1 \pmb{\lambda}_1+\alpha_2 \pmb{\lambda}_2$;
therefore, $\pmb{\tilde{\delta}} + \pmb{\tilde{\gamma}}$ and $\pmb{\tilde{\alpha}}$ are linearly independent, and it follows
that $\rho(b) = 0 < \rho(a)$.

\medskip\noindent
\emph{Subcase III.3.2. $\tau^{\ast}(v^d)$ does not generate $\tau^{\ast}\mathcal{I}$, but 
$\tau^{\ast}(\pmb{u}^{\pmb{r}_{i_1,j_1}}v^{j_1})$ (with maximal $j_1$) generates $\tau^{\ast}\mathcal{I}$.} Then
\[
\tau^{\ast}T_{i_1}  = \pmb{x}^{\pmb{\tilde{\gamma}}} \left( b_{i_1,j_1} + R(\pmb{x}) \right),
\]
where $R(0) = 0$. Moreover, $\pmb{\tilde{\delta}} + \pmb{\tilde{\gamma}}=  (\delta_1+r_{i_1,j_1,1}) \pmb{\lambda}_1 
+(\delta_2+r_{i_1,j_1,2}) \pmb{\lambda}_2 + j_1  \pmb{\lambda}_3$ and $\pmb{\tilde{\alpha}} = \alpha_1 \pmb{\lambda}_1+\alpha_2 \pmb{\lambda}_2$; therefore, $\pmb{\tilde{\delta}} + \pmb{\tilde{\gamma}}$ and $\pmb{\tilde{\alpha}}$ are linearly independent, and it follows that $\rho(b) = 0 < \rho(a)$.

\medskip\noindent
\emph{Subcase III.3.3. $\tau^{\ast}(\pmb{u}^{\pmb{\beta}})$ is the only generator of $\tau^{\ast}\mathcal{I}$.} Then
\[
\tau^{\ast}\pmb{u}^{\pmb{\delta}}T_{i_0}  = \pmb{x}^{\pmb{\tilde{\delta}}+\pmb{\tilde{\gamma}}} \left( 1 + R(\pmb{x}) \right),
\]
where $R(0) = 0$. Moreover, $\pmb{\tilde{\delta}} + \pmb{\tilde{\gamma}}=  (\delta_1+\beta_1) \pmb{\lambda}_1 + 
(\delta_2+\beta_2)\pmb{\lambda}_2 $ and $\pmb{\tilde{\alpha}} = \alpha_1 \pmb{\lambda}_1+\alpha_2 \pmb{\lambda}_2$. Since $\pmb{\delta}+\pmb{\beta}$, $\pmb{\alpha}$ are linearly independent, it follows that $\pmb{\tilde{\delta}} + \pmb{\tilde{\gamma}}$, $\pmb{\tilde{\alpha}}$ are linearly independent, and finally again, $\rho(b) = 0 < \rho(a)$.
\end{proof}

\bibliographystyle{amsplain}

\end{document}